\newtheorem{thm}{Theorem}[section]
\newtheorem{cor}[thm]{Corollary}
\newtheorem{lem}[thm]{Lemma}
\newtheorem{lemma}[thm]{Lemma}
\newtheorem{prop}[thm]{Proposition}
\newtheorem{mthm}{Theorem}
\theoremstyle{definition}
\numberwithin{equation}{section}
\newcommand{\rad}{\operatorname{rad}}
\newcommand{\Ker}{\operatorname{Ker}}
\renewcommand{\mod}{\operatorname{mod}}
\newcommand{\umod}{\operatorname{\underline{mod}}}
\newcommand{\soc}{\operatorname{soc}}
\newcommand{\charact}{\operatorname{char}}
\newcommand{\Tr}{\operatorname{Tr}}
\newcommand{\op}{\operatorname{op}}
\newcommand{\bA}{\mathbb{A}}
\newcommand{\bD}{\mathbb{D}}
\newcommand{\bE}{\mathbb{E}}
\newcommand{\bL}{\mathbb{L}}
\newcommand{\cN}{\mathcal{N}}
\begin{document}

\baselineskip=17pt

\title[Socle deformed preprojective algebras]{Socle deformed preprojective algebras of generalized Dynkin type}

\author[J. Bia\l kowski]{Jerzy Bia\l kowski}
\address{Faculty of Mathematics and Computer Science\\
Nicolaus Copernicus University\\
Chopina 12/18, 87-100 Toru\'{n}, Poland}
\email{jb@mat.uni.torun.pl}

\date{}

\begin{abstract}
We provide a complete classification of 
finite-dimensional self-injective algebras 
which are socle equivalent to 
preprojective algebras of generalized Dynkin type.
In particular, we conclude that these algebras
are deformed preprojective algebras of generalized
Dynkin type (in the sense of \cite{BES1,ESk2}),
and hence are periodic algebras.
\end{abstract}

\subjclass[2010]{Primary 16D50, 16G20; Secondary 16G50}

\keywords{Preprojective algebra, 
Deformed preprojective algebra, 
Socle equivalence}

\maketitle

\section*{Introduction and the main results}
Throughout the article, $K$ will denote a fixed algebraically
closed field.
By an algebra we mean an associative, finite-dimensional $K$-algebra
with an identity, which we moreover assume to be basic and indecomposable.
Then any such an algebra $A$ can be written as a bound quiver algebra, that is,
$A \cong K Q / I$, where $Q = Q_A$ is the Gabriel quiver of $A$ and $I$ is
an admissible ideal in the path algebra $K Q$ of $Q$ \cite{ASS}.
For an algebra $A$, we denote by $\mod A$ the category of
finite-dimensional right $A$-modules and by $\Omega_A$ 
the syzygy operator 
which assigns to a module $M$ in $\mod A$ the kernel $\Omega_A(M)$
of a minimal projective cover $P_A(M) \to M$ of $M$ in $\mod A$.
Then a module $M$ in $\mod A$ is called periodic if
$\Omega_A^n(M) \cong M$ for some $n \geq 1$,
and if so the minimal such $n$ is called the period of $M$.
Further, the category $\mod A$ is called   periodic 
if any
module $M$ in $\mod A$ without non-zero projective direct summands
is periodic.
It is known \cite{GSS} that the periodicity of $\mod A$ forces
$A$ to be   self-injective, 
that is, the projective and injective modules in $\mod A$ coincide.
The category of finite-dimensional $A$-$A$-bimodules over an algebra $A$
is equivalent to the category $\mod A^e$ over the enveloping algebra
$A^e = A^{\op} \otimes_K A$ of $A$.
Then an algebra $A$ is called   periodic 
if $A$ is a periodic module in $\mod A^e$.
It is well known that if $A$ is a periodic algebra 
of period $n$ then for any indecomposable non-projective
module $M$ in $\mod A$ the syzygy 
$\Omega_A^n(M)$
is isomorphic to $M$ (see \cite{SY}).
It has been 
proved that all self-injective algebras
of finite representation type different from $K$, 
are periodic (see \cite{Du}).
Finding or possibly classifying periodic algebras 
is an important problem.
It is very interesting because of the connections 
with group theory, topology, singularity theory and cluster algebras.
For example, it was shown in \cite{ESk3,ESk4}
that the representation-infinite tame symmetric periodic algebras
of period $4$, with $2$-regular Gabriel quivers, 
are very specific deformations of the weighted surface algebras
of triangulated surfaces with arbitrarily oriented triangles.

A prominent class of periodic algebras is formed by the preprojective
algebras of generalized Dynkin type and their deformations.
Preprojective algebras were introduced by 
I.M. Gelfand and V.A. Ponomarev \cite{GP}
(and implicitely in the work of C.  Riedtmann \cite{Rd}) to study the
preprojective representations of finite quivers, 
and occurred in very different contexts.
The finite-dimensional preprojective algebras are exactly the
preprojective algebras $P(\Delta)$ associated to the   generalized Dynkin graphs 
$\mathbb{A}_n (n \geq 1)$, $\mathbb{D}_n (n \geq 4)$,
$\mathbb{E}_6$, $\mathbb{E}_7$, $\mathbb{E}_8$, $\mathbb{L}_n (n \geq 1)$.
It follows from 
\cite{HPR1,HPR2} 
that these are exactly the
graphs associated to the indecomposable finite symmetric Cartan
matrices having subadditive functions which are not additive.
We also mention that the preprojective algebras $P(\Delta)$ of Dynkin
types $\Delta \in \{ \mathbb{A}_n, \mathbb{D}_n, \mathbb{E}_6,
\mathbb{E}_7, \mathbb{E}_8 \}$ are the stable Auslander algebras
of the categories of maximal Cohen-Macaulay modules
of the Kleinian 2-dimensional hypersurface singularities
$K[[x,y,z]]/(f_{\Delta})$ 
(see \cite{AR1,AR2,ESk2}).
Moreover, for each $n \geq 1$, the preprojective algebra
$P(\mathbb{L}_n)$ is the stable Auslander algebra of the category of
maximal Cohen-Macaulay modules over the simple plane curve singularity
$K[[x,y]]/(x^{2n+1} + y^2)$ 
(see \cite{DW,ESk2}).
The preprojective algebras of Dynkin types have been  applied
by C.~Geiss, B.~Leclerc and J.~Schr\"oer to study the structure of cluster
algebras related to semisimple and unipotent
algebraic groups (see \cite{GLS}).
We also note that the preprojective algebras
of generalized Dynkin type are periodic of period
dividing $6$ 
(see \cite{AR2,BBK,BES2,ESn}).

In the paper we are concerned with the classification
of isomorphism classes 
of the deformations of preprojective algebras
of generalized Dynkin type introduced in \cite{BES1}.
Namely, to each generalized Dynkin graph $\Delta$ one associates
a finite-dimensional 
local self-injective $K$-algebra
$R(\Delta)$ and a deformed preprojective algebra of type $\Delta$ is the
deformation $P^f(\Delta)$ of $P(\Delta)$ given by an admissible element
$f$ of the radical square of $R(\Delta)$, and $P^f(\Delta) = P(\Delta)$
for $f = 0$ (see Section~\ref{sec:1} for details).
It has been proved in \cite{BES1} that the deformed preprojective algebras
of generalized Dynkin types are (finite-dimensional) periodic
algebras and form the representatives 
of the isomorphism classes of all 
indecomposable self-injective algebras $A$ for
which the third syzygy $\Omega_A^3(S)$ of any non-projective simple
module $S$ in $\mod A$ is isomorphic 
to its Nakayama shift $\cN_A(S)$.
Then it follows that every 
self-injective algebra $A$ with the 
stable module category $\umod A$ $2$-Calabi-Yau 
is isomorphic to
a deformed preprojective algebra $P^f(\Delta)$ of a generalized Dynkin
type $\Delta$, and it is an interesting open problem when the converse
is true.
Therefore, it is important to have a complete classification
of the isomorphism classes of deformed preprojective
algebras of generalized Dynkin type.
The isomorphism classes of the deformed preprojective  algebras of types
$\mathbb{L}_n$, $n \geq 1$, were classified in \cite{BES2}.
Moreover, it was shown in \cite{BES2} that these algebras
are exactly the stable Auslander algebras of simple curve
singularities of Dynkin type $\mathbb{A}_{2n}$.
Furthermore, A.~Dugas proved in \cite{Du2} that the stable
Auslander algebras of arbitrary hypersurface singularities
of finite Cohen-Macaulay type are periodic algebras.
Hence it would be interesting to understand connections
of these algebras with deformed preprojective algebras
of Dynkin type as well as describe their quiver presentations.
In \cite{B}
we proved that every deformed preprojective algebra
of type $\bE_6$ is isomorphic to the preprojective
algebra $P(\bE_6)$.
On the other hand, 
a classification of the isomorphism classes of all deformed
preprojective algebras of Dynkin types $\mathbb{D}_n (n \geq 4)$,
$\mathbb{E}_7$, $\mathbb{E}_8$ 
seems to be currently a hard problem.

Recall that by a classical  Nakayama's result 
(see \cite{Na,SY})
the left socle $\soc({}_A A)$ and the right socle $\soc(A_A)$
of a self-injective algebra $A$ coincide, and hence
$\soc({}_A A) = \soc(A_A)$
is a two-sided ideal of $A$, called the socle of $A$
and denoted by $\soc(A)$.
In the paper, two self-injective algebras $A$ and $B$ are said
to be \emph{socle equivalent} if the quotient algebras
$A/\soc(A)$ and $B/\soc(B)$ are isomorphic.
Moreover, by a \emph{socle deformed preprojective algebra of
generalized Dynkin type $\Delta$} 
is meant
a deformed preprojective
algebra $P^f(\Delta)$ of type $\Delta$ 
which is socle equivalent but non-isomorphic to $P(\Delta)$.

In Section~\ref{sec:preliminaries} we introduce 
deformed preprojective algebras 
  $P^{*}(\bD_{2m})$, $m \geq 2$,
  $P^{*}(\bE_7)$,
  $P^{*}(\bE_8)$, 
  $P^{*}(\bL_n)$, $n \geq 2$.
Then the first main result of the paper is as follows.

\begin{mthm}
\label{thm1}
Let $\Lambda$ be a basic, indecomposable, finite-dimensional
self-injective algebra over an algebraically closed field $K$.
Then $\Lambda$ is socle equivalent but not isomorphic
to a preprojective algebra
of generalized Dynkin type if and only if $K$ is of
characteristic $2$ and $\Lambda$ is isomorphic to one
of the algebras
  $P^{*}(\bD_{2m})$, $m \geq 2$,
  $P^{*}(\bE_7)$,
  $P^{*}(\bE_8)$, 
  $P^{*}(\bL_n)$, $n \geq 2$.
\end{mthm}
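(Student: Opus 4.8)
The plan is to prove both implications by passing to the quotient modulo the socle and using the known classification of deformed preprojective algebras.

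\medskip

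\noindent\emph{Sufficiency.} First I would verify that each of the algebras $P^{*}(\bD_{2m})$, $P^{*}(\bE_7)$, $P^{*}(\bE_8)$, $P^{*}(\bL_n)$, as constructed in Section~\ref{sec:preliminaries}, is indeed a deformed preprojective algebra $P^f(\Delta)$ for a suitable admissible element $f$ — this should be essentially built into their definition — and hence, by the main result of \cite{BES1}, is a self-injective (indeed periodic) algebra. Then I would show two things: (i) modulo the socle, each such algebra is isomorphic to $P(\Delta)/\soc(P(\Delta))$, which is a direct comparison of the defining relations (the deformation term $f$ lies in a high power of the radical, so it disappears in $A/\soc(A)$); and (ii) in characteristic $2$ the deforming element $f$ is \emph{not} zero up to the isomorphisms of $P(\Delta)$ that fix the quiver, so $P^{*}(\Delta) \not\cong P(\Delta)$. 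Point (ii) is the place where the hypothesis $\charact K = 2$ is genuinely used; it should follow from an explicit invariant (for instance comparing the structure of $\soc(A)$ as a bimodule, or a low-degree Hochschild-type computation, or simply exhibiting an element that cannot be removed by a change of arrows) distinguishing $P^{*}(\Delta)$ from $P(\Delta)$.

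\medskip

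\noindent\emph{Necessity.} Suppose $\Lambda$ is self-injective, basic, indecomposable, and socle equivalent but not isomorphic to some $P(\Delta)$. Since $\Lambda/\soc(\Lambda) \cong P(\Delta)/\soc(P(\Delta))$, the Gabriel quiver of $\Lambda$ equals $Q_\Delta$, and the defining relations of $\Lambda$ agree with those of $P(\Delta)$ modulo the socle; thus the relations of $\Lambda$ differ from those of $P(\Delta)$ only by elements of $\soc(\Lambda)$, i.e.\ by elements lying in the top nonzero radical power. I would then argue that such a $\Lambda$ must satisfy the syzygy condition $\Omega_\Lambda^3(S) \cong \cN_\Lambda(S)$ for every non-projective simple $S$ — because this property is detectable from $\Lambda/\soc(\Lambda)$ together with self-injectivity, or alternatively because $\Lambda$ and $P(\Delta)$ have the same minimal projective presentations of simples up to the socle, and the socle correction does not affect the relevant syzygies up to the Nakayama shift. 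By the characterization in \cite{BES1}, this forces $\Lambda \cong P^f(\Delta)$ for some admissible $f \in \rad^2 R(\Delta)$. It remains to run through the classification of admissible elements $f$ for each generalized Dynkin type $\Delta$ and pick out exactly those for which $P^f(\Delta)$ is socle equivalent to $P(\Delta)$ (so $f$ lies in the socle of the local algebra $R(\Delta)$, i.e.\ the deformation only touches the top degree) but $P^f(\Delta) \not\cong P(\Delta)$; the assertion is that this happens precisely when $\charact K = 2$ and $\Delta \in \{\bD_{2m}, \bE_7, \bE_8, \bL_n\}$, yielding the listed algebras. For types $\bA_n$, $\bE_6$, and $\bD_{2m+1}$ one shows no such nontrivial socle deformation exists (for $\bE_6$ this uses \cite{B}, where all deformations collapse to $P(\bE_6)$).

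\medskip

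\noindent\emph{Main obstacle.} The technical heart is the case-by-case analysis on the necessity side: given that the relations of $\Lambda$ may be modified by arbitrary socle elements, one must normalize these corrections by base change (rescaling arrows, adding higher-degree terms to arrows) and determine the orbit structure, then match the surviving non-trivial orbits with the characteristic-$2$ phenomenon. The types $\bD_{2m}$, $\bE_7$, $\bE_8$, $\bL_n$ each require their own explicit computation in the mesh/preprojective relations, and verifying that the resulting $P^{*}(\Delta)$ is genuinely non-isomorphic to $P(\Delta)$ — rather than merely socle equivalent — is exactly where a characteristic-$2$ obstruction must be isolated and shown to be non-vanishing; in odd characteristic the same corrections are absorbed by a change of variables, which is why the classification is empty there.
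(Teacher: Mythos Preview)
Your overall architecture is sound, but the necessity argument takes a different route from the paper and contains a real gap.

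\textbf{The gap.} You propose to deduce that $\Lambda$ satisfies $\Omega_\Lambda^3(S) \cong \cN_\Lambda(S)$ for every simple $S$ from the fact that $\Lambda/\soc(\Lambda) \cong P(\Delta)/\soc(P(\Delta))$, and then invoke the characterization from \cite{BES1} to conclude $\Lambda \cong P^f(\Delta)$. The justification you offer---that this property ``is detectable from $\Lambda/\soc(\Lambda)$ together with self-injectivity'' or that ``the socle correction does not affect the relevant syzygies''---is not established and is not obvious. The third syzygy of a simple depends on the full relation ideal, and perturbing relations by socle elements can in principle alter the differentials in a minimal projective resolution. Making this step rigorous would itself require a careful analysis of the presentations, at which point you have essentially done the paper's work anyway and gained nothing from the detour through \cite{BES1}. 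Moreover, even after reaching $\Lambda \cong P^f(\Delta)$, your claim that socle equivalence to $P(\Delta)$ corresponds to ``$f$ lies in the socle of $R(\Delta)$'' is not quite correct (for $\bL_n$ one has $f = x^{2n-2}$, which is not in $\soc R(\bL_n)$; what matters is that the \emph{relation} perturbation $\varepsilon f(\varepsilon)$ is a socle element of $P(\bL_n)$).

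\textbf{What the paper does instead.} The paper never touches the syzygy characterization. It works entirely by hand with presentations: for each $\Delta$ it writes down the most general bound-quiver algebra $A'$ whose relations agree with those of $P(\Delta)$ modulo socle elements (so with free scalar parameters $\theta_i \in K$ on the socle corrections), and then constructs explicit algebra isomorphisms---by sending each arrow $a_k$ to $a_k$ plus a carefully chosen socle path---that either eliminate all the $\theta_i$ (types $\bA_n$, $\bD_{2m+1}$, $\bE_6$) or reduce them to a single parameter $\theta$ (types $\bD_{2m}$, $\bE_7$, $\bE_8$, $\bL_n$), which after rescaling is $0$ or $1$, yielding $P(\Delta)$ or $P^*(\Delta)$. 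For $\charact K \neq 2$ a further isomorphism using $\tfrac{1}{2}$ collapses $P^*(\Delta)$ onto $P(\Delta)$.

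\textbf{The invariant in characteristic $2$.} For the sufficiency direction you suggest an unspecified invariant (bimodule socle, Hochschild, etc.) to separate $P^*(\Delta)$ from $P(\Delta)$. The paper's choice is clean and worth knowing: for $\Delta \in \{\bD_{2m}, \bE_7, \bE_8\}$ over a field of characteristic $2$, the algebra $P(\Delta)$ is symmetric (by \cite{BBK}) while $P^*(\Delta)$ is only weakly symmetric and \emph{not} symmetric---any symmetrizing form would have to vanish on all the length-two cycles $\bar a_i a_i$, forcing it to vanish on the socle element $f(\bar a_0 a_0,\bar a_1 a_1)$, a contradiction. For $\bL_n$ the non-isomorphism is already in \cite{BES1,BES2}.
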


The second main result describes symmetricity  properties of
socle deformed preprojective algebras
of generalized Dynkin type.

\begin{mthm}
\label{thm3}
Let $\Lambda$ be a socle deformed preprojective algebra
of generalized Dynkin type. 
Then the following hold.
\begin{enumerate}[(i)]
 \item
  $\Lambda$ is a weakly symmetric algebra.
 \item
  $\Lambda$ is a symmetric algebra if and only if $\Lambda$
  is isomorphic to the algebra $P^{*}(\bL_n)$ for some $n \geq 2$.
\end{enumerate}
\end{mthm}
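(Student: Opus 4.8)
The plan is to treat the four families supplied by Main Theorem~\ref{thm1}. By that theorem we may assume $\charact K = 2$ and that $\Lambda$ is one of $P^{*}(\bD_{2m})$ ($m \geq 2$), $P^{*}(\bE_7)$, $P^{*}(\bE_8)$, $P^{*}(\bL_n)$ ($n \geq 2$), with the quivers, relations and $K$-bases of Section~\ref{sec:preliminaries} at our disposal. Two facts will be used throughout: first, each $P^{*}(\Delta)$ is socle equivalent to $P(\Delta)$ by construction, so they share the same Gabriel quiver and the relations of $P^{*}(\Delta)$ are obtained from the preprojective ones by adding terms lying in the socle; and second, the criterion --- which comes from identifying the symmetric associative $K$-bilinear forms on a finite-dimensional algebra $A$ with the $K$-linear forms vanishing on the commutator space $[A,A] = \langle xy-yx \mid x,y\in A\rangle_K$ --- that a basic indecomposable self-injective algebra $A$ over the algebraically closed field $K$ is symmetric if and only if, for every primitive idempotent $e$, the minimal one-sided ideal $\soc(eA)$ is not contained in $[A,A]$.

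For part (i) I would read off from the path bases of Section~\ref{sec:preliminaries} that every indecomposable projective $e_i\Lambda$ has simple top $S_i$ and that its socle is one-dimensional, spanned by a cyclic path $i\rightsquigarrow i$, so that $\soc(e_i\Lambda)\cong S_i$; hence the Nakayama permutation of $\Lambda$ is trivial and $\Lambda$ is weakly symmetric. (Alternatively, weak symmetry of $P(\Delta)$ for $\Delta\in\{\bD_{2m},\bE_7,\bE_8,\bL_n\}$ is known, and the Nakayama permutation of a self-injective algebra is an invariant of socle equivalence.) This is the routine part.

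For ``$\Leftarrow$'' in part (ii) I would show $P^{*}(\bL_n)$ symmetric by exhibiting a symmetrizing form. With $\mathcal{B}$ the path basis of $P^{*}(\bL_n)$, let $\lambda$ be the linear form sending to $1$ the $n$ elements of $\mathcal{B}$ spanning the spaces $\soc(e_iP^{*}(\bL_n))$, $1\le i\le n$, and all other elements of $\mathcal{B}$ to $0$. Then $\lambda$ is non-zero on every minimal one-sided ideal, hence non-degenerate, and it remains to verify $\lambda(xy)=\lambda(yx)$ for $x,y\in\mathcal{B}$, i.e.\ that for each cyclic path $c$ spanning some socle the coefficient of $c$ in $xy$ equals that in $yx$; since $P(\bL_n)$ is already symmetric and the deforming element $f\in\rad^2 R(\bL_n)$ affects only the socle, these equalities follow from the undeformed case after a bounded computation.

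The implication ``$\Rightarrow$'' in part (ii) --- that $P^{*}(\bD_{2m})$, $P^{*}(\bE_7)$ and $P^{*}(\bE_8)$ are \emph{not} symmetric --- is the crux, and the step I expect to be the main obstacle. It cannot be reduced to the undeformed case: socle equivalence does not preserve symmetry, and in characteristic $2$ the algebra $P(\Delta)$ is itself symmetric (the signs that obstruct symmetry over other fields vanish), so the failure of symmetry must be extracted from the deforming relations. By the criterion above it suffices, for each of the three types, to fix a vertex $i$ and to express a generator $c_i$ of $\soc(e_i\Lambda)$ as a commutator $xy-yx$ with $x\in e_i\Lambda e_j$, $y\in e_j\Lambda e_i$ for a neighbouring vertex $j$ (or as a short sum of such); the relations that distinguish $P^{*}(\Delta)$ from $P(\Delta)$ are precisely what force $xy$ and $yx$ to differ by $c_i$. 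Carrying out this commutator bookkeeping --- uniformly in $m$ for $\bD_{2m}$, then separately for $\bE_7$ and $\bE_8$ from their deformed mesh relations --- and checking that no choice of linear form can evade the resulting obstruction, is where the genuine work lies.
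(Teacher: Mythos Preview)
Your proposal is correct, and your commutator approach to (ii)$\Rightarrow$ is exactly the dual of the paper's: the paper assumes a symmetrizing form $\psi$ exists and derives $\psi(f)=0$ on the socle generator $f$ at the exceptional vertex, which is equivalent (via your criterion) to your showing $f\in[\Lambda,\Lambda]$. What the paper's phrasing makes clearer is that the step you flag as the ``main obstacle'' with ``genuine work'' is in fact almost immediate. The deformed relation at the exceptional vertex already reads $f = -\sum_{\alpha}\bar\alpha\,\alpha$ (summing over the arrows $\alpha$ out of that vertex, with $\bar{\bar\alpha}=\alpha$); for each leaf arrow one has $\psi(\bar a_k a_k)=\psi(a_k\bar a_k)=\psi(0)=0$, and along each linear branch the undeformed relations $\bar a_j a_j + a_{j+1}\bar a_{j+1}=0$ together with $\psi(a_j\bar a_j)=\psi(\bar a_j a_j)$ telescope down to a leaf and give $0$ as well. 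So $\psi(f)=0$, contradicting nondegeneracy --- uniformly for $\bD_{2m}$, $\bE_7$, $\bE_8$, with no case-by-case bookkeeping beyond reading off the relations. In your language: $f$ is already presented by the defining relation as a sum of terms each of which is a telescoping sum of commutators $[\bar a_j,a_j]$.

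Your parts (i) and (ii)$\Leftarrow$ match the paper; the paper takes your second option for (i) (socle equivalence preserves the Nakayama permutation, and $P(\Delta)$ is symmetric in characteristic $2$ by Theorem~\ref{th:bbk}), and for (ii)$\Leftarrow$ simply cites Proposition~\ref{thm:Ln-sym} rather than constructing the form.
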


We obtain also the following consequence of
Theorem~\ref{thm1}.

\begin{mthm}
\label{thm4}
Let $\Delta$ be a generalized Dynkin type different from $\bA_1$ and $\bL_1$,
and $K$ an algebraically closed field.
Then the following conditions are equivalent:
\begin{enumerate}[(i)]
 \item
  There exists a socle deformed 
  preprojective algebra $P^f(\Delta)$ of type $\Delta$ over $K$.
 \item
  The preprojective algebra $P(\Delta)$ of type $\Delta$ over $K$
  is a symmetric algebra,
  and 
  $K$ is of characteristic $2$,
  if $\Delta$ is of the type $\bL_n$.
 \item
  $\Delta$ is one of the types
  $\bD_{2m}$, $m \geq 2$,
  $\bE_7$,
  $\bE_8$,
  $\bL_n$, $n \geq 2$,
  and $K$ is of characteristic $2$.
\end{enumerate}
\end{mthm}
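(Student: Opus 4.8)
The plan is to deduce the equivalence of (i) and (iii) directly from Theorem~\ref{thm1}, and then to obtain (ii) $\Leftrightarrow$ (iii) from the known description of which preprojective algebras $P(\Delta)$ of generalized Dynkin type are weakly symmetric, respectively symmetric.

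For (i) $\Rightarrow$ (iii) I would argue as follows. Suppose $\Lambda = P^f(\Delta)$ is a socle deformed preprojective algebra of type $\Delta$. By definition $\Lambda$ is socle equivalent but not isomorphic to $P(\Delta)$, so $\Lambda$ is socle equivalent but not isomorphic to a preprojective algebra of generalized Dynkin type, and Theorem~\ref{thm1} applies: $\charact K = 2$ and $\Lambda \cong P^{*}(\Gamma)$ for some type $\Gamma \in \{\bD_{2m}\ (m\geq 2),\ \bE_7,\ \bE_8,\ \bL_n\ (n\geq 2)\}$. Now $P^f(\Delta)$ and $P^{*}(\Gamma)$ are both deformed preprojective algebras, and a deformed preprojective algebra of type $\Gamma$ has the same Gabriel quiver as $P(\Gamma)$; since preprojective algebras of distinct generalized Dynkin types have non-isomorphic Gabriel quivers, the isomorphism $P^f(\Delta)\cong P^{*}(\Gamma)$ forces $\Delta = \Gamma$, and (iii) follows. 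For (iii) $\Rightarrow$ (i): when $\charact K = 2$ and $\Delta$ is one of $\bD_{2m}$ $(m\geq 2)$, $\bE_7$, $\bE_8$, $\bL_n$ $(n\geq 2)$, the algebra $P^{*}(\Delta)$ of Section~\ref{sec:preliminaries} is a deformed preprojective algebra $P^f(\Delta)$, it is socle equivalent to $P(\Delta)$, and by Theorem~\ref{thm1} it is not isomorphic to $P(\Delta)$; hence it is a socle deformed preprojective algebra of type $\Delta$, proving (i).

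For (ii) $\Leftrightarrow$ (iii) I would invoke the following facts about preprojective algebras of generalized Dynkin type, which can be read off explicit bound quiver presentations (or see \cite{BES1,ESk2}): $P(\Delta)$ is weakly symmetric precisely for $\Delta \in \{\bA_1,\ \bD_{2m}\ (m\geq 2),\ \bE_7,\ \bE_8,\ \bL_n\ (n\geq 1)\}$ -- for $\bA_n$ $(n\geq 2)$, $\bD_{2m+1}$ and $\bE_6$ the Nakayama permutation, induced by $-w_0$, is nontrivial -- while $P(\bA_1)$ and $P(\bL_n)$ are symmetric over every field, and for $\Delta \in \{\bD_{2m}\ (m\geq 2),\ \bE_7,\ \bE_8\}$ the algebra $P(\Delta)$ is symmetric if and only if $\charact K = 2$. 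Granting these, (iii) $\Rightarrow$ (ii) is immediate: in characteristic $2$ each of $P(\bD_{2m})$, $P(\bE_7)$, $P(\bE_8)$, $P(\bL_n)$ is symmetric, and the supplementary condition in (ii) for type $\bL_n$ holds because $\charact K = 2$. Conversely, if (ii) holds then $P(\Delta)$ is weakly symmetric, hence (using $\Delta \notin \{\bA_1,\bL_1\}$) $\Delta \in \{\bD_{2m}\ (m\geq 2),\ \bE_7,\ \bE_8,\ \bL_n\ (n\geq 2)\}$; if $\Delta$ is of type $\bL_n$ the extra clause of (ii) gives $\charact K = 2$, and if $\Delta\in\{\bD_{2m},\bE_7,\bE_8\}$ then symmetry of $P(\Delta)$ forces $\charact K = 2$ by the dichotomy recalled above. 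In every case (iii) holds.

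I expect the main obstacle to be the symmetry dichotomy for $\bD_{2m}$, $\bE_7$, $\bE_8$: one must show that these algebras, weakly symmetric over any field, are symmetric only in characteristic $2$, which amounts to checking that their Nakayama automorphism -- which rescales each arrow of an explicit presentation by a sign -- becomes inner exactly when $2 = 0$ in $K$ (or one simply quotes \cite{BES1,ESk2}). It is also worth stressing that the exclusion of $\bA_1$ and $\bL_1$ is essential rather than cosmetic: for these two types $P(\Delta)$ is commutative, hence symmetric over every field, yet $R(\Delta)$ carries no nonzero admissible element, so no socle deformed preprojective algebra of these types exists, and without the exclusion (ii) would fail to imply (iii). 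The remaining ingredients -- that a deformed preprojective algebra of type $\Gamma$ has the Gabriel quiver of $P(\Gamma)$, and that $P^{*}(\Delta)$ is socle equivalent but not isomorphic to $P(\Delta)$ -- are furnished by the construction in Section~\ref{sec:preliminaries} together with Theorem~\ref{thm1}.
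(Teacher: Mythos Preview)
Your proposal is correct and follows essentially the same route as the paper: the equivalence (i)~$\Leftrightarrow$~(iii) is deduced from Theorem~\ref{thm1}, and (ii)~$\Leftrightarrow$~(iii) from the Brenner--Butler--King symmetry criterion (the paper's Theorem~\ref{th:bbk}) together with the symmetry of $P(\bL_n)$. Your version adds a useful Gabriel-quiver argument to pin down that $\Delta = \Gamma$ in (i)~$\Rightarrow$~(iii), and routes (ii)~$\Leftrightarrow$~(iii) through weak symmetry first, but these are refinements rather than a different approach.
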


The paper is organized as follows.
In Section~\ref{sec:preliminaries} we present some
definitions and results
on deformed preprojective algebras,
essential for further considerations.
Section~\ref{sec:Ln} 
is devoted to known results on the deformed preprojective algebras
of type $\bL_n$, $n \geq 1$.
In Section~\ref{sec:no-def} 
we prove that 
each algebra $A$ socle equivalent to the algebra  
$P(\Delta)$, for 
$\Delta \in \{ \mathbb{A}_n, \mathbb{D}_{2m+1}, \mathbb{E}_6 \}$,
$n \geq 1$, $m \geq 2$,
is in fact isomorphic to $P(\Delta)$
(in any characteristic).
In Section~\ref{sec:only-canoninical} 
we consider 
algebras socle equivalent to the algebras  
$P(\Delta)$, for
$\Delta \in \{ \mathbb{D}_{2m}, \mathbb{E}_7, \mathbb{E}_8, \mathbb{L}_{n} \}$, $m \geq 2$, $n \geq 2$,
and 
prove that each of them is isomorphic 
to the preprojective algebra $P(\Delta)$
or to the algebra $P^*(\Delta)$.
Section~\ref{sec:char-2} 
is devoted to the proof
that 
the algebras socle equivalent but not isomorphic
to preprojective algebras
of generalized Dynkin type
exist only in characteristic $2$.
In Section~\ref{sec:non-sym} 
we prove that
in characteristic $2$
the algebras 
$P^{*}(\bD_{2m})$, $m \geq 2$,
$P^{*}(\bE_7)$,
$P^{*}(\bE_8)$
are 
weakly symmetric but
not symmetric,
and derive some consequences.
In the final 
Section~\ref{sec:last} 
we combine the
results of the previous sections and
complete the proofs of the main results.

\smallskip

For general background on the relevant 
representation theory we refer to the books
\cite{ASS,SY}
and the survey article \cite{ESk2}.

\section{Deformed preprojective algebras of generalized Dynkin type}
\label{sec:1}
\label{sec:preliminaries}

For a generalized Dynkin graph $\Delta$, we consider the associated
(double) quiver $Q_{\Delta}$ as follows:
\[
    \parbox[t]{2.3cm}{$\Delta = \mathbb{A}_n :\\ $ \ $(n \geq 1)$}
    \xymatrix{
        0 \ar@<.5ex>^{a_0}[r] & 1 \ar@<.5ex>^{\bar{a}_0}[l] \ar@<.5ex>^{a_1}[r] & 2
        \ar@<.5ex>^{\bar{a}_1}[l] \ar@{*{ }*{\,.\,}*{ }}[r] & n-2 \ar@<.5ex>^{a_{n-2}}[r] &
         n - 1 \ar@<.5ex>^(.4){\bar{a}_{n-2}}[l] \\
    }
\]
\[
    \parbox[t]{2.3cm}{$\Delta = \mathbb{D}_n :\\ $ \ $(n \geq 4)$}
    \xymatrix{
        0 \ar@<.5ex>^{a_0}[rd] \\
         & 2 \ar@<.5ex>^{\bar{a}_0}[lu] \ar@<.5ex>^{\bar{a}_1}[ld] \ar@<.5ex>^{a_2}[r] & 3
        \ar@<.5ex>^{\bar{a}_2}[l] \ar@{*{ }*{\,.\,}*{ }}[r] & n-2 \ar@<.5ex>^{a_{n-2}}[r] &
         n - 1 \ar@<.5ex>^(.4){\bar{a}_{n-2}}[l] \\
        1 \ar@<.5ex>^{a_1}[ru] \\
    }
\]
\[
    \parbox[t]{2.3cm}{$\Delta = \mathbb{E}_n : \\ $ \ $(6 \leq n \leq 8)$}
    \xymatrix{
        &&0 \ar@<.5ex>^{a_0}[d] \\
        1 \ar@<.5ex>^{a_1}[r] & 2 \ar@<.5ex>^{\bar{a}_1}[l] \ar@<.5ex>^{a_2}[r] &
        3 \ar@<.5ex>^{\bar{a}_2}[l] \ar@<.5ex>^{a_3}[r] \ar@<.5ex>^{\bar{a}_0}[u] &
        4
        \ar@<.5ex>^{\bar{a}_3}[l] \ar@{*{ }*{\,.\,}*{ }}[r] & n-2 \ar@<.5ex>^{a_{n-2}}[r] &
         n-1 \ar@<.5ex>^(.4){\bar{a}_{n-2}}[l] \\
    }
\]
\[
    \parbox[c]{2.3cm}{$\Delta = \mathbb{L}_n :\\ $ \ $(n \geq 1)$}
    \xymatrix{
        0
        \ar `ld_u[] `_rd[]^{\varepsilon = \bar{\varepsilon}} []
        \ar@<.5ex>^{a_0}[r] & 1 \ar@<.5ex>^{\bar{a}_0}[l] \ar@<.5ex>^{a_1}[r] & 2
        \ar@<.5ex>^{\bar{a}_1}[l] \ar@{*{ }*{\,.\,}*{ }}[r] & n-2 \ar@<.5ex>^{a_{n-2}}[r] &
         n - 1 \ar@<.5ex>^(.4){\bar{a}_{n-2}}[l] \\
    } .
\]
Therefore, the graph $\Delta$ is obtained from the quiver $Q_{\Delta}$ by
replacing each pair $\{a,\bar{a}\}$ of arrows by an indirect edge.
The preprojective algebra $P(\Delta)$ of type $\Delta$ is then
the bound quiver algebra $K Q_{\Delta} / I_{\Delta}$,
where $I_{\Delta}$ is the ideal of the path algebra
$K Q_{\Delta}$ of $Q_{\Delta}$ generated by the elements
\[
    \sum\limits_{a,ia =v} a \bar{a}, 
\]
for all vertices $v$ of $Q_{\Delta}$, where $\bar{\bar{a}} = a$ and $i a$ is the starting
vertex of an arrow $a$.
Then $P(\Delta)$ is a finite-dimensional self-injective algebra.
Moreover, for $\Delta \in \{ \mathbb{A}_n, \mathbb{D}_n,
\mathbb{E}_6, \mathbb{E}_7, \mathbb{E}_8, \mathbb{L}_n \} \setminus \{ \mathbb{A}_1 \}$
we have $P(\Delta)$ is non-simple periodic with
$\Omega_{P(\Delta)^e}^6(P(\Delta)) \cong P(\Delta)$.

Let $\Delta$ be a generalized Dynkin graph.
Following \cite{BES1} we associate to
$\Lambda$ the $K$-algebra $R(\Delta)$ as follows
\[
 \begin{array}{ll}
 R(\mathbb{A}_n) = K; & n \geq 1; \\
 R(\mathbb{D}_n) = K \langle x,y \rangle / (x^2, y^2, (x+y)^{n-2}); & n \geq 4; \\
 R(\mathbb{E}_n) = K \langle x,y \rangle / (x^2, y^3, (x+y)^{n-3}); & 6 \leq n \leq 8; \\
 R(\mathbb{L}_n) = K [x] / (x^{2n}); & n \geq 1. \\
 \end{array}
\]

Further, we choose the exceptional vertex in the Gabriel quiver
$Q_{P(\Delta)}$ of 
$P(\Delta)$ as: $0, 2, 3, 3, 3$ and $0$ if
$\Delta = \mathbb{A}_n, \mathbb{D}_n, \mathbb{E}_6, \mathbb{E}_7,
\mathbb{E}_8$ and $\mathbb{L}_n$, respectively. Moreover, we
denote by $e_{\Delta}$ the primitive idempotent of $P(\Delta)$ associated
to the chosen exceptional vertex of $Q_{P(\Delta)}$. Then a simple
checking shows that $R(\Delta)$ is isomorphic to $e_{\Delta} P(\Delta) e_{\Delta}$,
and hence is a local, finite-dimensional and self-injective $K$-algebra.

For $\Delta \in \{ \mathbb{A}_n, \mathbb{D}_n, \mathbb{L}_n\}$, every
element $f$ from the square $\rad^2 R(\Delta)$ of the radical
of $R(\Delta)$ is said to be admissible. 

If $\Delta = \mathbb{E}_n (6 \leq n \leq 8)$ an element
$\rad^2 R(\Delta)$ is said to be admissible if it satisfies the following
condition:
$$
   \big (x+y+f(x,y)\big)^{n-3} = 0 .
$$

For an admissible element $f \in R(\Delta)$, the
\emph{deformed preprojective algebra}
$P^f(\Delta)$ of type $\Delta$,   with
respect to $f$, is defined to be as the bound quiver algebra $K Q_{\Delta}/I_{\Delta}^f$,
where $I_{\Delta}^f$ is the ideal in the
path algebra $K Q_{\Delta}$ of $_{\Delta}Q$ generated by the elements
\[
  \sum_{a , i a = v} a \bar{a},
\]
for all ordinary vertices $v$ of $Q_{\Delta}$, 
and
\[
 \begin{array}{l@{\mbox{\ \ if \,}}l}
  a_0 \bar{a}_0, &  \Delta = \mathbb{A}_n ;\\
  \bar{a}_0 a_0 + \bar{a}_1 a_1 + a_2 \bar{a}_2 +
   f(\bar{a}_0 a_0, \bar{a}_1 a_1), \
   (\bar{a}_0 a_0 + \bar{a}_1 a_1)^{n-2}, &
 \Delta = \mathbb{D}_n ;\\
  \bar{a}_0 a_0 + \bar{a}_2 a_2 + a_3 \bar{a}_3 +
   f(\bar{a}_0 a_0, \bar{a}_2 a_2), \
   (\bar{a}_0a_0 + \bar{a}_2a_2)^{n-3}, &
   \Delta = \mathbb{E}_n  
   ; \\
  \varepsilon^2 + a_0 \bar{a}_0 + \varepsilon f(\varepsilon), \
    \varepsilon^{2n},  &  \Delta = \mathbb{L}_n . \\
 \end{array}
\]

Clearly, $P(\mathbb{A}_n)$ is the unique deformed preprojective
algebra of type $\mathbb{A}_n$, since $\rad R(\mathbb{A}_n) = 0$.
We note that the deformed preprojective algebras of generalized
Dynkin type are, with the exception of few small cases, of
wild representation type (see \cite{ESk1}).
It has been proved in \cite{BES1} that any deformed preprojective
algebra $P^f(\Delta)$ of generalized Dynkin type $\Delta$ is a periodic
algebra but the proof presented there does not allow to determine
its period.

We introduce now
canonical 
deformed preprojective algebras of generalized Dynkin type
different from $\mathbb{A}_n$ and $\mathbb{L}_1$,
which are socle equivalent to preprojective algebras 
of generalized Dynkin type.
We denote:
\begin{itemize}
 \item
    $P^{*}(\mathbb{D}_{2m}) = P^{f}(\mathbb{D}_{2m})$, for $f$ the coset of $(x y)^{m-1}$ in $R(\mathbb{D}_{2m})$, $m \geq 2$;
 \item
    $P^{*}(\mathbb{D}_{2m+1}) = P^{f}(\mathbb{D}_{2m+1})$, for $f$ the coset of $(x y)^{m-1}x$ in $R(\mathbb{D}_{2m+1})$, $m \geq 2$;
 \item
    $P^{*}(\mathbb{E}_6) = P^{f}(\mathbb{E}_6)$, for $f$ the coset of $(yx)^{2}y$ in $R(\mathbb{E}_6)$;
 \item
    $P^{*}(\mathbb{E}_n) = P^{f}(\mathbb{E}_n)$, for $f$ the coset of $(xy)^{3n-17}$ in $R(\mathbb{E}_6)$, $7 \leq n \leq 8$;
 \item
    $P^{*}(\mathbb{L}_n) = P^{f}(\mathbb{L}_n)$, for $f$ the coset of $x^{2n-2}$ in $R(\mathbb{L}_n)$, $n \geq 2$.
\end{itemize}

\section{Deformed preprojective algebras of generalized Dynkin type $\bL_n$}
\label{sec:Ln}

In this section we present known facts on the structure of
deformed preprojective algebras of generalized Dynkin type $\bL_n$.
We recall that these algebras were studied by
J.~Bia\l kowski, K.~Erdmann and A.~Skowro\'nski \cite{BES1,BES2},
K.~Erdmann and A.~Skowro\'nski \cite{ESk2},
and
T.~Holm and A.~Zimmermann \cite{HZ}.

Following the notation from \cite{BES2}
we distinguish special deformed preprojective algebras of type $\bL_n$:
\[
    L_n^{(r)} = P^{f_r}(\bL_n)
    \mbox{ with }
    f_r = x^{2 r} + (x^{2 n}),
    \mbox{ for }
    r \in \{1,\dots,n\}.
\]
We note, that in the above notation, 
$L_n^{(n)} = P^{f_n}(\bL_n)$ is the ordinary
{preprojective algebra} $P(\bL_n)$ of type $\bL_n$
and $L_n^{(n-1)} = P^{f_{n-1}}(\bL_n) = P^{*}(\bL_n)$
is its socle deformation.
Moreover, we have the following fact \cite[Proposition~6.1]{BES1}.

\begin{prop}
\label{prop:Ln-clas}
If $K$ is of characteristic $2$, 
then the algebras $L_n^{(1)},\dots,L_n^{(n)}$
form a family of pairwise non-isomorphic
deformed preprojective algebras of generalized Dynkin type $\bL_n$.
\end{prop}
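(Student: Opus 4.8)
The plan is to establish the non-isomorphism of the algebras $L_n^{(1)},\dots,L_n^{(n)}$ by finding an invariant that distinguishes them. Since all these algebras are deformations of $P(\bL_n) = K Q_{\bL_n}/I_{\bL_n}^{f_r}$ with the \emph{same} Gabriel quiver $Q_{\bL_n}$ and the same dimension (the deformation only changes the relations, not the underlying graded dimension count), the invariant cannot be as coarse as the dimension vector or the quiver. Instead I would work inside the local algebra $R(\bL_n) = K[x]/(x^{2n})$, which by the discussion in Section~\ref{sec:1} is isomorphic to $e_{\bL_n} P(\bL_n) e_{\bL_n}$ and, more importantly, the analogous corner algebra of $L_n^{(r)}$ at the exceptional vertex $0$ can be computed from the relation $\varepsilon^2 + a_0\bar a_0 + \varepsilon f_r(\varepsilon)$, $\varepsilon^{2n}$.

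First I would identify, for each $r$, the element $g_r := \varepsilon + f_r(\varepsilon) \cdot (\text{something})$ or more precisely analyze how the loop $\varepsilon$ behaves: the defining relation forces $\varepsilon^2 = -a_0\bar a_0 - \varepsilon f_r(\varepsilon)$, and iterating together with $\varepsilon^{2n}=0$ gives a precise description of the subalgebra generated by $\varepsilon$ and of the cyclic paths through vertex $0$. In characteristic $2$ the sign disappears, which is exactly why the characteristic hypothesis is needed. The key computation is to pin down, as an element of the path algebra modulo $I_{\bL_n}^{f_r}$, the "difference" between $\varepsilon^2$ and $a_0\bar a_0$: one should find that $(a_0\bar a_0)^{k}$ or $\varepsilon^{2k}$ can be rewritten with a correction term whose lowest-degree part is governed by $f_r = x^{2r}$, i.e. appears first in degree $2r+1$ or so. Concretely, I expect that the natural map sending $\varepsilon \mapsto \varepsilon$, $a_0 \mapsto a_0$, $\bar a_0 \mapsto \bar a_0$, $a_i \mapsto a_i$, $\bar a_i \mapsto \bar a_i$ does \emph{not} extend to an isomorphism $L_n^{(r)} \to L_n^{(s)}$ for $r \neq s$, and then one must argue that \emph{no} isomorphism does.

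The main obstacle is the last step: ruling out \emph{all} isomorphisms, not just the "identity-on-arrows" one. Since the algebras are basic and connected and share the quiver $Q_{\bL_n}$, any isomorphism $\varphi \colon L_n^{(r)} \to L_n^{(s)}$ induces (after composing with an inner automorphism to fix the idempotents, which is possible because the quiver has no nontrivial automorphisms — vertex $0$ is the only one carrying a loop, and the rest is a linear $\bA_{n-1}$-tail) an automorphism of $K Q_{\bL_n}$ respecting the grading filtration by radical powers. One then writes $\varphi(\varepsilon) = \lambda_1 \varepsilon + (\text{higher terms in } \rad^2)$, $\varphi(a_0) = \mu a_0 + \cdots$, $\varphi(\bar a_0) = \nu \bar a_0 + \cdots$, etc., with $\lambda_1,\mu,\nu \in K^{*}$, plugs these into the relation $\varphi(\varepsilon)^2 + \varphi(a_0)\varphi(\bar a_0) + \varphi(\varepsilon) f_r(\varphi(\varepsilon))$ and demands that it lie in $I_{\bL_n}^{f_s}$. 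Comparing the resulting expression degree by degree against the relation $\varepsilon^2 + a_0\bar a_0 + \varepsilon f_s(\varepsilon)$, the leading terms force relations among $\lambda_1,\mu,\nu$, and then the first \emph{non-leading} obstruction — sitting in degree $2\min(r,s)+1$ — shows $f_r$ and $f_s$ must have the same "initial exponent", i.e. $r = s$. This degree-matching argument is essentially the content of \cite[Proposition~6.1]{BES1}, so I would cite it for the bookkeeping, but the honest way to see it is the filtered-automorphism analysis just sketched; the delicate point is keeping track of which monomials in the $a_i,\bar a_i$ along the tail can contribute correction terms in the relevant degrees, and verifying that in characteristic $2$ the cross terms $2\lambda\varepsilon(\cdots)$ that would otherwise muddy the comparison simply vanish.
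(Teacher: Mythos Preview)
The paper does not give its own proof of this proposition: it is stated as a known fact and attributed to \cite[Proposition~6.1]{BES1}. So there is nothing in the paper to compare your argument against; your sketch is effectively a reconstruction of the cited external result, and you yourself acknowledge this.

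As for the sketch itself, the strategy is sound: the quiver $Q_{\bL_n}$ has no nontrivial automorphism, so after normalising idempotents any isomorphism $L_n^{(r)}\to L_n^{(s)}$ must send $\varepsilon$ to an element of $e_0(\rad L_n^{(s)})e_0$ with nonzero linear part; since $e_0 L_n^{(s)} e_0 \cong K[x]/(x^{2n})$ is commutative, the Frobenius endomorphism in characteristic~$2$ lets you compute $\varphi(\varepsilon)^2$ cleanly, and comparing the lowest degree in which $a_0\bar a_0$ differs from $\varepsilon^2$ (namely $2r+1$ versus $2s+1$) gives the obstruction. Two small points: your phrase ``induces an automorphism of $KQ_{\bL_n}$'' is not quite right---what you get is only compatibility with the radical filtration, not a lift to the path algebra---and the actual bookkeeping showing that higher-order correction terms cannot shift the first discrepancy downwards is omitted. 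That is exactly the computation \cite{BES1} carries out, so citing it (as both you and the paper do) is the honest move.
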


We have
also
the following classification
of deformed preprojective algebras of
type $\bL_n$, 
formulated in \cite[Theorem~2]{BES2}.

\begin{thm}
\label{thm:Ln-clas}
Let $\Lambda = P^f(\bL_n)$ be a deformed preprojective algebra
of type $\bL_n$ over an algebraically closed field $K$.
Then the following statements hold.
\begin{enumerate}[(i)]
 \item
  If $K$ is of characteristic different from $2$, then
  $\Lambda$ is isomorphic to the preprojective algebra $P(\bL_n)$.
 \item
  If $K$ is of characteristic $2$, then $\Lambda$ is isomorphic to
  an algebra $L_n^{(r)}$, for some $r \in \{1,\dots,n\}$.
\end{enumerate}
\end{thm}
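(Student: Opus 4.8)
The plan is to reduce an arbitrary admissible deformation $P^f(\bL_n)$ to one of the special algebras $L_n^{(r)}$ by analyzing the one parameter family of relations and performing a suitable change of arrows. Recall that $R(\bL_n) = K[x]/(x^{2n})$, so an admissible element is simply an arbitrary $f \in \rad^2 R(\bL_n)$, i.e.\ $f = f(x) = \sum_{j\geq 2} c_j x^j$ with $c_j \in K$ (modulo $x^{2n}$). The defining relations of $P^f(\bL_n)$ at the loop vertex $0$ are $\varepsilon^2 + a_0\bar a_0 + \varepsilon f(\varepsilon)$ and $\varepsilon^{2n}$, together with the ordinary mesh relations $\bar a_{k-1}a_{k-1} + a_k\bar a_k$ at the interior vertices and $\bar a_{n-2}a_{n-2}$ at the end. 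The key observation is that $e_0 P^f(\bL_n) e_0 \cong R(\bL_n) = K[x]/(x^{2n})$ with $x$ corresponding to $\varepsilon$, so $\bar a_0 a_0$ and all the other loops $\bar a_{k-1}a_{k-1}$ become prescribed powers of $\varepsilon$, and the whole algebra structure is controlled by the single element $\varepsilon$ and the parameter $f$.

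First I would treat the case $\charact K \neq 2$. Here $1 + f'(\varepsilon)$-type considerations do not quite apply directly, but the cleaner route is: the relation $\varepsilon^2(1 + \text{(lower order in }\varepsilon\text{)}) = -a_0\bar a_0$ shows that $\varepsilon^2$ and $a_0\bar a_0$ generate the same ideal, and one seeks a new loop $\varepsilon' = u\varepsilon$ with $u$ a unit in $e_0 P^f e_0 = K[\varepsilon]/(\varepsilon^{2n})$ such that $(\varepsilon')^2 = \varepsilon^2 + \varepsilon f(\varepsilon)$, equivalently $u^2 = 1 + f(\varepsilon)/\varepsilon$ — note $f(\varepsilon)/\varepsilon \in \rad$ since $f \in \rad^2$, so $1 + f(\varepsilon)/\varepsilon$ is a unit with constant term $1$, and in characteristic $\neq 2$ it has a square root in $K[\varepsilon]/(\varepsilon^{2n})$ by the usual binomial series (which terminates modulo $\varepsilon^{2n}$). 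Rescaling $a_0 \mapsto u a_0$ or $\bar a_0 \mapsto u \bar a_0$ accordingly then turns the deformed relation into $\varepsilon'^2 + a_0\bar a_0$, i.e.\ identifies $P^f(\bL_n) \cong P(\bL_n)$; one checks the remaining relations are preserved under this substitution since they only involve the $a_k,\bar a_k$ for $k\geq 0$ and the change affects $a_0,\bar a_0$ uniformly. This establishes (i).

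For (ii), with $\charact K = 2$, the square-root trick fails precisely because squaring is the Frobenius and hits only squares. The strategy is instead: using unit rescalings of the loop as above, reduce $f(\varepsilon)/\varepsilon$ modulo squares in $\rad (K[\varepsilon]/(\varepsilon^{2n}))$; since over a field of characteristic $2$ the squares in this local ring are spanned by the even powers $\varepsilon^{2i}$, while $u^2 - 1$ ranges over all $\varepsilon \cdot(\text{squares})$, one can kill every term of $\varepsilon f(\varepsilon) = \varepsilon^2 f(\varepsilon)/\varepsilon$ except possibly to reduce it to a single monomial $\varepsilon^{2r}$ (the leading nonzero term of the appropriate shape), i.e.\ to the relation $\varepsilon^2 + a_0\bar a_0 + \varepsilon^{2r-1}\varepsilon = \varepsilon^2 + a_0\bar a_0 + \varepsilon^{2r}$ — but this is exactly the defining relation of $L_n^{(r)}$ after renaming, since there the extra term is $x^{2r}$. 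More carefully: the admissible $f$ contributes the term $\varepsilon f(\varepsilon)$; writing $\varepsilon f(\varepsilon) = c\,\varepsilon^{2r} + \text{higher}$, iterated unit substitutions absorb the higher terms, and the normalization over an algebraically closed field lets one scale the leading coefficient $c$ to $1$, landing in $L_n^{(r)}$. Combined with Proposition~\ref{prop:Ln-clas}, which guarantees the $L_n^{(r)}$ are pairwise non-isomorphic, this gives the classification.

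The main obstacle I anticipate is bookkeeping in the characteristic $2$ reduction: one must show that the successive change of variables $\varepsilon \mapsto u\varepsilon$ genuinely simplifies $\varepsilon f(\varepsilon)$ down to a single monomial and that no obstruction arises from the interplay with the $\varepsilon^{2n}=0$ relation at the very top degree (the deformation should not alter the socle dimension, which forces the "admissible'' condition to already be automatic here, but this needs verifying), and that all substitutions are simultaneously compatible with the mesh relations at the interior vertices $1,\dots,n-1$. Since those mesh relations are homogeneous in the $a_k,\bar a_k$ and the substitution only rescales $a_0$ (or $\bar a_0$) by a unit of $e_0 P^f e_0$, this compatibility should hold, but making the induction on the degree of the leading term of $f$ precise — and checking it terminates before degree $2n$ — is the delicate point. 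I would also double-check that the resulting isomorphism type depends only on $r$ and not on the field (beyond its characteristic), which follows from algebraic closedness via the scaling of the leading coefficient.
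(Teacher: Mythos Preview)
The paper does not prove this theorem: it is quoted verbatim as \cite[Theorem~2]{BES2} and used as a black box. So there is no ``paper's own proof'' to compare against; your proposal is an attempt to reconstruct the argument of \cite{BES2}.

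For part (i) your square-root argument is correct and is exactly the standard one: since $f\in\rad^2 R(\bL_n)$, the element $1+f(\varepsilon)/\varepsilon$ is a unit with constant term $1$ in $K[\varepsilon]/(\varepsilon^{2n})$, and in characteristic $\neq 2$ it has a square root $u$; replacing $\varepsilon$ by $u\varepsilon$ (and leaving the $a_i,\bar a_i$ alone) yields an isomorphism $P(\bL_n)\to P^f(\bL_n)$.

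For part (ii) your sketch has the right shape but several slips that would need fixing. First, an index error: for $L_n^{(r)}$ one has $f_r=x^{2r}$, so the extra term in the relation is $\varepsilon f_r(\varepsilon)=\varepsilon^{2r+1}$, an \emph{odd} power, not $\varepsilon^{2r}$ as you write. Second, your description of the orbit of the substitution is off: in characteristic $2$, for $u=1+v$ with $v\in\rad$ one has $u^2-1=v^2$, and since $K$ is perfect the squares $v^2$ are exactly the elements supported on \emph{even} powers $\varepsilon^{2i}$, $i\geq 1$; thus $(u^2-1)\varepsilon^2$ contributes only even-degree corrections to $\varepsilon^2+\varepsilon f(\varepsilon)$, which is precisely why the odd-degree part of $\varepsilon f(\varepsilon)$ (and in particular its leading odd term $\varepsilon^{2r+1}$) is the invariant that survives. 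You have this backwards in your parenthetical. Third, to kill the higher odd terms and normalize to a single $\varepsilon^{2r+1}$ one also needs to allow substitutions on $a_0$ (or $\bar a_0$) of the form $a_0\mapsto w\,a_0$ with $w$ a unit in $e_0 P^f e_0$, not only on $\varepsilon$; the combined action of these two families of substitutions is what gives enough freedom, and making this reduction precise (an induction on the lowest odd degree appearing, together with a final scalar rescaling using that $K$ is algebraically closed to set the leading coefficient to $1$) is the actual content of the proof in \cite{BES2}. Your closing paragraph correctly flags this as the delicate point.
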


We have also the following fact from \cite[Corollary~4]{BES2}.

\begin{prop}
\label{thm:Ln-sym}
Let $\Lambda = P^f(\bL_n)$ be a deformed preprojective algebra
of type $\bL_n$. Then $\Lambda$ is a symmetric algebra.
\end{prop}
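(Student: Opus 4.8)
The plan is to realize $\Lambda = P^f(\bL_n)$ as a symmetric algebra by producing an explicit symmetrizing $K$-linear form. Recall that a finite-dimensional $K$-algebra $A$ is symmetric precisely when there is a $K$-linear form $\varphi\colon A\to K$ with $\varphi(uv)=\varphi(vu)$ for all $u,v\in A$ such that the associated bilinear form $(u,v)\mapsto\varphi(uv)$ is nondegenerate; and when $A$ is self-injective the nondegeneracy is automatic once $\varphi$ is nonzero on each simple summand of $\soc A$, because every nonzero one-sided ideal of $A$ contains a simple submodule and hence meets $\soc A$, while $\soc A$, $A$ being basic, is a direct sum of pairwise non-isomorphic simple modules. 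So the whole problem is to write down such a $\varphi$ for $\Lambda$.

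First I would record the structural input. The quiver $Q_{\bL_n}$ is connected, so $\Lambda$ is basic and indecomposable, and using the defining relations (or the description of these algebras in \cite{BES1,BES2}) one checks that $\Lambda$ is weakly symmetric and that for each vertex $i$ the space $\soc(e_i\Lambda)=\soc(\Lambda e_i)=e_i(\soc\Lambda)e_i$ is one-dimensional, spanned by a cyclic path $c_i\in e_i\Lambda e_i$; thus $\soc\Lambda=\bigoplus_{i=0}^{n-1}Kc_i$. At the exceptional vertex $0$ the deformation term $\varepsilon f(\varepsilon)$ with $f\in\rad^2 R(\bL_n)$ only modifies the relation $\varepsilon^2+a_0\bar a_0=0$ by a unit of $e_0\Lambda e_0$, so $\varepsilon$ still generates $e_0\Lambda e_0\cong K[x]/(x^{2n})\cong R(\bL_n)$ and one may take $c_0=\varepsilon^{2n-1}$; for $i\ge 1$ the element $c_i$ may be taken to be the longest nonzero cyclic path through vertex $0$, running from $i$ to $0$ along the arrows $\bar a_j$, looping around vertex $0$, and returning along the arrows $a_j$.

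Next, fix a $K$-basis $\mathcal B$ of $\Lambda$ consisting of residue classes of paths and containing $c_0,\dots,c_{n-1}$, and define $\varphi$ by $\varphi(c_i)=\gamma_i$ for suitably chosen nonzero scalars $\gamma_0,\dots,\gamma_{n-1}\in K$ and $\varphi(b)=0$ for all other $b\in\mathcal B$; then $\varphi$ is nondegenerate by the first paragraph. It remains to verify $\varphi(uv)=\varphi(vu)$, and by bilinearity we may assume $u,v$ are paths. If $\varphi(uv)\neq 0$ then $uv$ is a cyclic path at some vertex $i$ with nonzero $c_i$-coefficient, which forces $u\in e_i\Lambda e_j$ and $v\in e_j\Lambda e_i$ for some $j$, whence $vu\in e_j\Lambda e_j$ as well, and one must compare the $c_i$-coefficient of $uv$ with the $c_j$-coefficient of $vu$. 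This comparison is where the work lies. The point that makes it go through is that every preprojective relation $\sum_a a\bar a$ is invariant under the involution $a\mapsto\bar a$ and the deformed relation at $0$ is $\varepsilon^2+a_0\bar a_0+\varepsilon f(\varepsilon)$ with $\bar\varepsilon=\varepsilon$: the rewriting rules are ``palindromic'', so cycling $uv$ to $vu$ changes the socle coefficient only by a controllable sign, which is absorbed by letting $\gamma_0,\dots,\gamma_{n-1}$ alternate appropriately (in characteristic $2$ one simply takes all $\gamma_i=1$). The main obstacle is organizing this bookkeeping cleanly; a convenient route is to first treat cyclic paths at the exceptional vertex, where the computation takes place inside the commutative Frobenius algebra $K[x]/(x^{2n})$ and the symmetrizing form is transparent, and then to reduce a general $c_i$ to vertex $0$ by conjugating along $a_0,\dots,a_{i-1}$. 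Alternatively, one may invoke Theorem~\ref{thm:Ln-clas} to reduce at the outset to $\Lambda=P(\bL_n)$ when $\charact K\neq 2$ and to $\Lambda=L_n^{(r)}$ when $\charact K=2$, where the extra term $\varepsilon^{2r+1}$ lies so deep in the radical that it does not affect any of the relevant coefficient computations.
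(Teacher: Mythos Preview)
The paper does not prove this proposition at all: it is simply quoted as \cite[Corollary~4]{BES2}, with no argument supplied. So there is no proof in the present paper to compare against.

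Your strategy---building an explicit symmetrizing form by prescribing its values on a path basis containing the socle generators $c_i$---is the standard route and is in spirit what \cite{BES2} does. Your structural remarks are correct: $e_0\Lambda e_0\cong K[\varepsilon]/(\varepsilon^{2n})$ survives the deformation because $\varepsilon f(\varepsilon)\in\rad^3$, each $e_i(\soc\Lambda)e_i$ is one-dimensional, and the undeformed relations are indeed palindromic under $a\leftrightarrow\bar a$.

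However, what you have written is a plan rather than a proof. The sentence ``cycling $uv$ to $vu$ changes the socle coefficient only by a controllable sign, which is absorbed by letting $\gamma_0,\dots,\gamma_{n-1}$ alternate appropriately'' is precisely the content of the proposition, and you have not carried it out: you neither specify the $\gamma_i$ nor verify a single nontrivial instance of $\varphi(uv)=\varphi(vu)$. The palindromic heuristic is suggestive but does not by itself determine which signs appear or why they are consistent across all pairs $(u,v)$; and at vertex $0$ the deformed relation $\varepsilon^2+a_0\bar a_0+\varepsilon f(\varepsilon)=0$ is inhomogeneous, so one must also argue that the extra term does not disturb the socle-coefficient bookkeeping (it does not, since it lands in higher radical powers, but this needs to be said). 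Your alternative via Theorem~\ref{thm:Ln-clas}, reducing to $P(\bL_n)$ or $L_n^{(r)}$, is cleaner organisationally but still leaves the same explicit verification to be done for those algebras. In short: right approach, but the actual computation is missing.
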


Therefore, it follows that both 
the preprojective algebra $P(\bL_n) = P(\bL_n^{(n)})$ 
of generalized Dynkin type $\bL_n$ 
and 
the socle deformed preprojective algebra 
$P^*(\bL_n) = P(\bL_n^{(n-1)})$ of generalized Dynkin type $\bL_n$   
are symmetric algebras.

\section{Algebras without proper socle deformations}
\label{sec:no-def}

The aim of this section is to prove the following 
proposition.

\begin{prop}
\label{prop:no-def}
Let $\Lambda$ be a preprojective algebra
of Dynkin type 
$\Delta \in \{ \mathbb{A}_n, \mathbb{D}_{2m+1}, \mathbb{E}_6 \}$,
$n \geq 1$, $m \geq 2$,
and let $A$ be an algebra socle equivalent to $\Lambda$.
Then $A$ and $\Lambda$ are isomorphic.
\end{prop}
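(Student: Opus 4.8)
The strategy is to show that if $A$ is socle equivalent to $\Lambda = P(\Delta)$ for $\Delta \in \{\mathbb{A}_n, \mathbb{D}_{2m+1}, \mathbb{E}_6\}$, then in fact $A \cong \Lambda$, by analyzing how much freedom there is in lifting an isomorphism $A/\soc(A) \cong \Lambda/\soc(\Lambda)$ to an isomorphism $A \cong \Lambda$. First I would observe that socle equivalence forces $A$ to have the same Gabriel quiver $Q_\Delta$ as $\Lambda$ (the quiver is recovered from $A/\rad^2 A$, which is a quotient of $A/\soc(A)$ in these cases since $\soc(A) \subseteq \rad^2 A$), and the same dimension. So $A = KQ_\Delta / I$ for some admissible ideal $I$, and the relations of $I$ agree with those of $I_\Delta$ modulo socle; i.e., each preprojective relation $\rho_v = \sum_{ia=v} a\bar a$ of $\Lambda$ is mapped to an element of $\soc(A)$. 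The plan is then to change the choice of arrows (rescaling and, where the quiver permits, mixing arrows out of a fixed vertex by a linear change of variables) so that all these correction terms in the socle are absorbed, landing exactly on the relations defining $P(\Delta)$.

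The key structural input is that for these three families the relevant idempotent-truncated algebra $e_\Delta P(\Delta) e_\Delta \cong R(\Delta)$ is as simple as possible: $R(\mathbb{A}_n) = K$, and although $R(\mathbb{D}_{2m+1})$ and $R(\mathbb{E}_6)$ are larger, the admissibility condition together with the parity/structure of these particular Dynkin types means the "socle correction" $f$ lives in a part of $\rad^2 R(\Delta)$ that can be killed by an automorphism. Concretely, I expect the argument to run vertex-by-vertex along the quiver: at a vertex $v$ with the preprojective relation holding only modulo socle, one writes the discrepancy in terms of the loops $\bar a_i a_i$ at $v$, notes that this discrepancy is an admissible element $f$ of $\rad^2 R(\Delta)$, and then invokes (or reproves in these cases) the fact that for $\Delta \in \{\mathbb{A}_n, \mathbb{D}_{2m+1}, \mathbb{E}_6\}$ every admissible $f$ yields $P^f(\Delta) \cong P(\Delta)$ — for $\mathbb{L}_n$ and $\mathbb{E}_6$ this is essentially Theorem~\ref{thm:Ln-clas} and the result of \cite{B}, and for $\mathbb{A}_n$ it is immediate since $\rad R(\mathbb{A}_n) = 0$. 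The technical heart is producing the explicit algebra isomorphism $P^f(\Delta) \to P(\Delta)$: it is built as an automorphism of $KQ_\Delta$ given by rescaling arrows $a_i \mapsto \lambda_i a_i$, $\bar a_i \mapsto \mu_i \bar a_i$ with $\lambda_i \mu_i$ chosen recursively, together with, at the exceptional vertex, substituting $x \mapsto x + (\text{higher order})$ to absorb $f$; one then checks this substitution is invertible (so it is an automorphism of $R(\Delta)$) and that it carries $I^f_\Delta$ to $I_\Delta$.

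The main obstacle I anticipate is the bookkeeping at and near the exceptional vertex for $\mathbb{D}_{2m+1}$ and $\mathbb{E}_6$, where $R(\Delta)$ is noncommutative and of dimension $> 1$: one must check that the socle discrepancy at that vertex really is an \emph{admissible} element (for $\mathbb{E}_6$ this means verifying $(x+y+f)^{3} = 0$, which uses that we are in the $\mathbb{E}_6$ rather than $\mathbb{E}_7, \mathbb{E}_8$ case), and that the resulting change of variables is genuinely an automorphism rather than merely an endomorphism — this is where the specific graphs matter and where the proof for $\mathbb{D}_{2m+1}$ (odd subscript) diverges from what would fail for $\mathbb{D}_{2m}$. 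Away from the exceptional vertex the matching of relations modulo socle is routine: the socle of $P(\Delta)$ is spanned by the longest paths (one for each vertex, by self-injectivity and the explicit structure), the preprojective relations are homogeneous of degree $2$ hence far below socle degree, so on the quiver part the relation $\sum a\bar a = 0$ must hold on the nose after the rescaling, and the only genuine deformation parameter is the single admissible element at the exceptional vertex, which the cited classification results then force to be trivial up to isomorphism.
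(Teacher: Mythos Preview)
Your broad strategy---write $A$ as $KQ_\Delta/I$ with relations equal to the preprojective ones modulo socle, then modify the arrows to absorb the socle corrections---is exactly what the paper does. But your organization of the argument has a real gap: you treat the corrections away from the exceptional vertex as ``routine'' and claim the only genuine parameter is the admissible element $f\in\rad^2 R(\Delta)$ at the exceptional vertex. That is not how socle equivalence works here. For each vertex $v$ fixed by the Nakayama permutation (not just the exceptional one) the relation $\sum_{ia=v} a\bar a$ acquires an independent socle correction $\theta_v$, and absorbing these is precisely where the case distinction enters. In particular your $\mathbb{A}_n$ argument fails: for $n=2m+1$ the Nakayama permutation fixes the middle vertex $m$, not the exceptional vertex $0$, so the potential deformation sits at $m$ and has nothing to do with $\rad R(\mathbb{A}_n)=0$. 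The paper has to build an explicit isomorphism modifying $a_{m-1}$ by a socle path to kill that $\theta$; it is not immediate. Similarly, for $\mathbb{E}_6$ the general socle-equivalent $A$ carries independent corrections $\theta_0,\theta_3$ at vertices $0$ and $3$, so invoking \cite{B} (which only treats $P^f(\mathbb{E}_6)$, i.e.\ a correction at vertex $3$) does not settle the matter until you have first shown how to concentrate or kill $\theta_0$.

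The mechanism is also not rescaling: the paper's isomorphisms are of the form $a_k\mapsto a_k+(\text{specific path in the socle})$, and the whole point is to check that such a substitution does not reintroduce a socle term in the \emph{other} relations that $a_k$ participates in. This is exactly where $\mathbb{D}_{2m+1}$ differs from $\mathbb{D}_{2m}$: in the paper's proof the modification of $a_1$ forces one to verify that $a_1 a_2\cdots a_{n-2}\bar a_{n-2}\cdots\bar a_2\bar a_1=0$, which reduces to $a_1(a_2\bar a_2)^{n-3}\bar a_1=0$; this holds because $n-3$ is even when $n=2m+1$, and fails when $n=2m$, leaving behind the residual parameter that becomes $P^*(\mathbb{D}_{2m})$. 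You gesture at ``parity/structure'' but never name this identity, and without it there is no argument. The paper therefore does not factor through ``reduce to $P^f(\Delta)$ then classify''; it directly builds, case by case, an isomorphism $P(\Delta)\to A$ and checks all relations, with the type-specific vanishing of certain long paths doing the essential work.
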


We divide the proof of this proposition into 
three cases.

\begin{lemma}
Let $\Lambda$ be a preprojective algebra
of Dynkin type $\mathbb{A}_n$, $n \geq 1$, 
and let $A$ be a self-injective algebra socle equivalent to $\Lambda$.
Then the algebras $A$ and $\Lambda$ are isomorphic.
\end{lemma}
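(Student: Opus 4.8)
The plan is to exploit the fact that the preprojective algebra $P(\mathbb{A}_n)$ has an extremely small socle and a rigid quiver, so that "socle equivalent" leaves essentially no room for deformation. First I would recall the explicit structure of $\Lambda = P(\mathbb{A}_n)$: its Gabriel quiver is $Q_{\mathbb{A}_n}$ with vertices $0,1,\dots,n-1$ and arrows $a_i, \bar a_i$, and the relations are the mesh relations $\sum_{ia=v} a\bar a = 0$ together with $a_0\bar a_0 = 0$ at the exceptional vertex $0$. From this one computes that each indecomposable projective $P_i$ is uniserial (or nearly so) of small length, the Loewy length is bounded, and the socle of $\Lambda$ is spanned by the longest paths; in particular $\soc(\Lambda)$ is a small semisimple bimodule and $\Lambda/\soc(\Lambda)$ is an algebra of Loewy length one less.

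Next, given a self-injective algebra $A$ with $A/\soc(A) \cong \Lambda/\soc(\Lambda)$, I would transport the quiver and a set of relations: since $\soc(A) \subseteq \rad^2(A)$ (as $A$ is basic, indecomposable, non-simple self-injective and $\Lambda/\soc(\Lambda)$ is already non-semisimple with that property), the quivers of $A$ and $\Lambda$ coincide, $Q_A = Q_{\mathbb{A}_n}$, and we may choose arrows in $A$ lifting those in $\Lambda/\soc(\Lambda)$. Writing $A = KQ_{\mathbb{A}_n}/I$, every relation of $\Lambda/\soc(\Lambda)$ lifts to a relation in $A$ modulo $\soc(A)$; the point is to show these lifts can be corrected to honest relations of the preprojective form. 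The mesh relation $\sum_{ia=v} a\bar a$ at an ordinary vertex $v$ is a sum of paths of length $2$, while $\soc(A)$ lives in the top degree (length $\geq$ Loewy length of $\Lambda$ minus one $> 2$ for $n$ large), so these relations already hold exactly in $A$; for small $n$ one checks the few remaining cases by hand. Similarly the relation $a_0\bar a_0$ at the exceptional vertex is length $2$.

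The only genuine issue is the relation defining the socle itself, i.e.\ that the maximal paths vanish in $A$ — and here self-injectivity of $A$ does the work: in a basic self-injective algebra each projective-injective $P_i$ has simple socle, so the maximal nonzero paths in $A$ starting at $i$ are forced to match the socle dimension dictated by $A/\soc(A)$ together with $\dim \soc(A)$, which coincides with that of $\Lambda$. After rescaling the arrows (a diagonal change of basis in $KQ$, using that $K$ is algebraically closed so any nonzero scalar is a square/has the needed roots — actually only nonvanishing of certain path-coefficients is used), one matches $I$ with $I_{\mathbb{A}_n}$ on the nose, giving $A \cong \Lambda$. I expect the main obstacle to be the bookkeeping for the small-$n$ cases (say $n \leq 4$ or so), where the Loewy length is low enough that $\soc(A)$ could a priori interfere with the mesh relations; there one must argue directly, using self-injectivity and the shape of the projectives, that no nontrivial deformation term survives. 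For all larger $n$ the degree count makes the argument essentially automatic.
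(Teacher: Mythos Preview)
Your degree argument for lifting the mesh relations does not work. After lifting the arrows to $A$ you know only that $\sum_{ia=v} a\bar a \in \soc(A)$, and you claim this forces the sum to be zero because ``the socle lives in top degree $>2$''. But there is no contradiction here: a product of two arrows in $A$ is perfectly allowed to equal a nonzero socle element. The real obstruction is whether $e_v\,\soc(A)\,e_v = 0$, and this is governed by the Nakayama permutation of $P(\mathbb{A}_n)$, which sends $i \mapsto n-1-i$. For $n$ even no vertex is fixed, so indeed every $e_v\,\soc(A)\,e_v$ vanishes and all mesh relations lift exactly---but this is a parity argument, not a length argument. For \emph{every} odd $n = 2m+1$ (not just small $n$, contrary to what you expect) the middle vertex $m$ is fixed, and the mesh relation there may acquire a term
\[
\bar a_{m-1}a_{m-1} + a_m\bar a_m + \theta\,\bar a_{m-1}\cdots\bar a_0\,a_0\cdots a_{m-1} = 0
\]
with an arbitrary $\theta \in K$.

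Your proposed fix by diagonal rescaling cannot remove this $\theta$: sending $a_i \mapsto \lambda_i a_i$, $\bar a_i \mapsto \mu_i \bar a_i$ multiplies each monomial in the relation by a nonzero scalar, so it can normalise $\theta$ but never annihilate it. What the paper does instead is a non-diagonal substitution, replacing $a_{m-1}$ by $a_{m-1} + \theta\,\bar a_{m-2}\cdots\bar a_0\,a_0\cdots a_{m-1}$ and leaving the other arrows alone. This absorbs the $\theta$-term at vertex $m$, but one must then check that the relation at vertex $m-1$ is undisturbed; this relies on the identity
\[
\bar a_{m-2}\cdots\bar a_0\,a_0\cdots a_{m-1}\,\bar a_{m-1} = 0,
\]
obtained by repeatedly using $a_k\bar a_k = -\bar a_{k-1}a_{k-1}$ to push the rightmost $\bar a$ leftwards until it hits $a_0\bar a_0 = 0$. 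That small computation is the actual content of the lemma in the odd case, and nothing in your plan supplies it.
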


\begin{proof}
If $n$ is an even positive integer, then 
for each maximal non-zero path $\omega$ 
in $P(\mathbb{A}_n)$
its source $s(\omega)$ is different from target $t(\omega)$,
because $s(\omega) + t(\omega) = n-1$.
Therefore $P(\mathbb{A}_n)$ has no non-trivial socle deformation,
and so $A$ and $\Lambda$ are isomorphic.

Hence assume that $n = 2m+1$ for some non-negative integer $m$
and let $A$ be a self-injective algebra socle equivalent to the 
preprojective algebra $P(\mathbb{A}_n)$
of generalized Dynkin type $\mathbb{A}_n$.
Then $A$ is isomorphic to a 
bound quiver algebra $A'$ of the
path algebra of the quiver
\[
    \xymatrix@C=1pc{
        0 \ar@<.5ex>^{a_0}[rr] && 1
        \ar@<.5ex>^{\bar{a}_0}[ll]
         \ar@{-}@<.5ex>[r] & \ar@<.5ex>[l] \dots \ar@<.5ex>[r] & \ar@{-}@<.5ex>[l]
         n-2 \ar@<.5ex>^{a_{n-2}}[rr] &&
         n - 1 \ar@<.5ex>^(.5){\bar{a}_{n-2}}[ll] \\
    }
\]
bound by relations of the form
\begin{gather*}
  a_0 \bar{a}_0 = 0 
  , \ \
  \bar{a}_{n-1} a_{n-1} = 0
 ,\\
  \bar{a}_{m-1} a_{m-1} +  a_{m} \bar{a}_{m}
  +
  \theta \bar{a}_{m-1} \dots \bar{a}_1 \bar{a}_0 a_0 a_1 \dots {a}_{m-1}
   = 0
 ,\\
  \bar{a}_{l-1} a_{l-1} +  a_{l} \bar{a}_{l} = 0
  , \mbox{ for } l \in \{ 0, \dots, m-1, m+1 \dots, n-2 \}
 ,\\
   \bar{a}_{m-1} \dots \bar{a}_1 \bar{a}_0 a_0 a_1 \dots {a}_{m-1} {a}_m = 0
  , \ \
   \bar{a}_m \bar{a}_{m-1} \dots \bar{a}_1 \bar{a}_0 a_0 a_1 \dots {a}_{m-1} = 0
  ,
\end{gather*}
for some coefficient $\theta \in K$.
Observe moreover that the following equalities hold in the algebras
$P(\mathbb{A}_{n})$ and $A'$
\begin{align*}
  \bar{a}_{m-2} \dots \bar{a}_0 a_0 \dots {a}_{m-1} \bar{a}_{m-1}
  &=
   - \bar{a}_{m-2} \dots \bar{a}_0 a_0 \dots {a}_{m-2} \bar{a}_{m-2} {a}_{m-2}
  \\&= \dots
  =
   (-1)^{m-1} \bar{a}_{m-2} \dots \bar{a}_1 \bar{a}_0 a_0 \bar{a}_0 a_0 a_1 \dots {a}_{m-2}
  \\&= 0
  .
\end{align*}
We will show that algebras $P(\mathbb{A}_{n})$ and $A'$ are isomorphic.
Let $\varphi : P(\mathbb{A}_n) \to A'$ 
be the homomorphism of algebras defined on arrows as follows
\begin{align*}
  \varphi(a_{m-1}) &= a_{m-1}
    +
  \theta \bar{a}_{m-2} \dots \bar{a}_1 \bar{a}_0 a_0 a_1 \dots {a}_{m-1}
  ,\\
  \varphi(a_k) &= a_k
  , \mbox{ for }  k \in \{ 0,\dots,m-2,m,\dots,n-2 \}
  ,\\
  \varphi(\bar{a}_l) &= \bar{a}_l
  , \mbox{ for } l \in \{ 0,\dots,n-1 \}
  .
\end{align*}
We note that $\varphi$ is well defined.
Indeed, we have the equalities
\begin{align*}
    \varphi(a_0 \bar{a}_0)
    &= \varphi(a_0) \varphi(\bar{a}_0)
     = a_0 \bar{a}_0
     = 0,
    \\
    \varphi(\bar{a}_{m-2} a_{m-2} + a_{m-1} \bar{a}_{m-1}) &=
    \varphi(\bar{a}_{m-2}) \varphi(a_{m-2}) + \varphi(a_{m-1}) \varphi(\bar{a}_{m-1})
    \\ &=
    \bar{a}_{m-2} a_{m-2} + a_{m-1} \bar{a}_{m-1}
     \\ & \ \
         + \theta \bar{a}_{m-2} \dots \bar{a}_1 \bar{a}_0 a_0 a_1 \dots {a}_{m-1} \bar{a}_{m-1}
         = 0
         ,
\end{align*}\begin{align*}
    \varphi(\bar{a}_{m-1} a_{m-1} + a_{m} \bar{a}_{m}) &=
    \varphi(\bar{a}_{m-1}) \varphi(a_{m-1}) + \varphi(a_{m}) \varphi(\bar{a}_{m})
    \\ &=
    \bar{a}_{m-1} a_{m-1} + a_{m} \bar{a}_{m}
     \\ & \ \
         + \theta \bar{a}_{m-1} \bar{a}_{m-2} \dots \bar{a}_1 \bar{a}_0 a_0 a_1 \dots {a}_{m-1}
         = 0
         ,
    \\
    \varphi(\bar{a}_{k-1} a_{k-1} + a_{k} \bar{a}_{k}) &=
    \varphi(\bar{a}_{k-1}) \varphi(a_{k-1}) + \varphi(a_{k}) \varphi(\bar{a}_{k})
     =
    \bar{a}_{k-1} a_{k-1} + a_{k} \bar{a}_{k}
     \\&= 0
   , \mbox{ for } k \in \{ 1,\dots, m-2,m+1,\dots,n-2 \}
         ,
    \\
    \varphi(\bar{a}_{n-2} a_{n-2}) &=
       \varphi(\bar{a}_{n-2}) \varphi(a_{n-2}))
       =
       \bar{a}_{n-2} a_{n-2}
       = 0
         ,
\end{align*}
so $\varphi$  is well defined. 
Similarly, we define the homomorphism
$\psi : A' \to P(\mathbb{A}_n)$ 
of algebras given on the arrows as follows:
\begin{align*}
  \psi(a_{m-1}) &= a_{m-1}
    -
  \theta \bar{a}_{m-2} \dots \bar{a}_1 \bar{a}_0 a_0 a_1 \dots {a}_{m-1}
  ,\\
  \psi(a_k) &= a_k
  , \mbox{ for }  k \in \{ 0,\dots,m-2,m,\dots,n-2 \}
  ,\\
  \psi(\bar{a}_l) &= \bar{a}_l
  , \mbox{ for } l \in \{ 0,\dots,n-1 \}
  ,
\end{align*}
and prove that it is well defined.
The compositions $\psi \varphi$ and $\varphi \psi$ are 
the identities on $P(\mathbb{A}_n)$ and $A'$, respectively.
Hence $\psi$ and $\varphi$ are isomorphisms,
and the algebras $P(\mathbb{A}_n)$ and $A'$ are isomorphic.
Therefore, the algebras $P(\mathbb{A}_n)$ and $A$ are also isomorphic.
\end{proof}

\begin{lemma}
Let $A$ be a self-injective algebra socle equivalent to the 
preprojective algebra $P(\mathbb{E}_6)$.
Then the algebras $A$ and $P(\mathbb{E}_6)$ are isomorphic.
\end{lemma}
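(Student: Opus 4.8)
The plan is to mirror the strategy used in the $\mathbb{A}_n$ case: show that any self-injective algebra $A$ socle equivalent to $P(\mathbb{E}_6)$ must be realizable as a bound quiver algebra on the Gabriel quiver $Q_{\mathbb{E}_6}$ with relations differing from those of $P(\mathbb{E}_6)$ only by an admissible deformation, and then exhibit an explicit change of arrows trivializing that deformation. First I would record that $A$ and $P(\mathbb{E}_6)$ have the same Gabriel quiver (since $A/\soc(A) \cong P(\mathbb{E}_6)/\soc(P(\mathbb{E}_6))$ and the socle lies in $\rad^2$, so the quiver is read off from the quotient), and that the deformation of the defining relations is controlled by $\soc$, hence by elements of $e_{\mathbb{E}_6} A e_{\mathbb{E}_6} \cong R(\mathbb{E}_6) = K\langle x,y\rangle/(x^2,y^3,(x+y)^3)$ lying in $\rad^2$. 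One checks that $\soc R(\mathbb{E}_6)$ is one-dimensional, spanned by the coset of $(yx)^2 y$ (equivalently a single monomial of top degree), so the relation at the exceptional vertex $3$ becomes $\bar a_0 a_0 + \bar a_2 a_2 + a_3\bar a_3 + \theta\,(\text{that socle word in the arrows}) = 0$ for some scalar $\theta \in K$, with all other mesh relations and the zero-relation $(\bar a_0 a_0 + \bar a_2 a_2)^3 = 0$ unchanged; thus $A \cong P^{f}(\mathbb{E}_6)$ with $f = \theta\,(yx)^2 y$, which is precisely the canonical socle deformation $P^*(\mathbb{E}_6)$ when $\theta \neq 0$.

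Next I would invoke the result of \cite{B} cited in the introduction: every deformed preprojective algebra of type $\mathbb{E}_6$ is isomorphic to $P(\mathbb{E}_6)$. Since we have just shown $A \cong P^{f}(\mathbb{E}_6)$ for an admissible $f$, it follows immediately that $A \cong P(\mathbb{E}_6)$. Alternatively, and to keep the section self-contained, I would give the isomorphism directly by the same device as in the $\mathbb{A}_n$ lemma: define $\varphi : P(\mathbb{E}_6) \to A'$ on arrows by leaving all arrows fixed except one arrow incident to the exceptional vertex, say $a_0 \mapsto a_0 + \theta\,w$ where $w$ is the appropriate path of length $5$ (the socle word with one letter removed), and $\psi : A' \to P(\mathbb{E}_6)$ by the same formula with $\theta$ replaced by $-\theta$. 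The verification that $\varphi$ and $\psi$ are well defined reduces to checking that $\varphi$ sends each defining relation of $P(\mathbb{E}_6)$ into the ideal $I'$ of $A'$; the correction term $\theta w$ only affects the relations passing through the modified arrow, and in each case the extra term produced is $\pm\theta$ times a path that is already zero in $A'$ (it contains $(\bar a_0 a_0 + \bar a_2 a_2)^3$ as a subword, or collapses by the other mesh relations exactly as in the computation $\bar a_{m-2}\cdots = 0$ displayed for $\mathbb{A}_n$). Since $\psi\varphi = \id$ and $\varphi\psi = \id$ on the respective path algebras, $\varphi$ is an isomorphism.

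The main obstacle I anticipate is the bookkeeping in the second step: one must correctly identify the socle word of $R(\mathbb{E}_6)$, translate it into a path in $Q_{\mathbb{E}_6}$ through the identification $R(\mathbb{E}_6) \cong e_{\mathbb{E}_6}P(\mathbb{E}_6)e_{\mathbb{E}_6}$, and then verify that precisely one arrow correction kills $\theta$ — and that the extra terms it generates vanish. This is a finite but slightly delicate computation because in type $\mathbb{E}_6$ the exceptional vertex $3$ has three incident arrows, so there are several mesh relations to check rather than the two of the $\mathbb{A}_n$ case, and one must be careful that the modified arrow is chosen so that the correction term lies in the socle (so that changing it does not disturb the quotient $A/\soc(A)$). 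Given the cited result of \cite{B}, however, this computation is optional: the cleanest route is to reduce to $A \cong P^f(\mathbb{E}_6)$ and quote \cite{B}.
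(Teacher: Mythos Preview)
Your reduction of the problem to a single deformation parameter at the exceptional vertex $3$ is where the argument breaks. The Nakayama permutation of $P(\mathbb{E}_6)$ is the graph involution swapping the two arms, so its fixed vertices are $0$ \emph{and} $3$; consequently $e_i\,\soc P(\mathbb{E}_6)\,e_i \neq 0$ for $i\in\{0,3\}$ and a general self-injective algebra $A$ socle equivalent to $P(\mathbb{E}_6)$ carries \emph{two} independent scalar deformations. Concretely, in the paper the presentation of $A'$ has
\[
  a_0\bar a_0 = \theta_0\, a_0\bar a_2 a_2 \bar a_0 a_0 (\bar a_2 a_2)^2 \bar a_0,
  \qquad
  \bar a_0 a_0 + \bar a_2 a_2 + a_3\bar a_3 = \theta_3\,(\bar a_2 a_2)^2 \bar a_0 a_0 (\bar a_2 a_2)^2,
\]
and the isomorphism to $P(\mathbb{E}_6)$ must simultaneously kill $\theta_0$ and $\theta_3$. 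The paper does this by perturbing \emph{two} arrows, $a_0$ and $a_2$, with socle corrections whose coefficients are $\theta_0$ and $\theta_0+\theta_3$ respectively, after first establishing the identity $(\bar a_2 a_2)^2 \bar a_0 a_0 (\bar a_2 a_2)^2 = - \bar a_0 a_0 \bar a_2 a_2 \bar a_0 a_0 (\bar a_2 a_2)^2$ (derived from $(\bar a_0 a_0 + \bar a_2 a_2)^3 = 0$) which is needed to show that the relation at vertex $3$ is respected.

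Your sentence ``with all other mesh relations \dots\ unchanged; thus $A \cong P^f(\mathbb{E}_6)$'' is therefore the missing step, not a triviality: the passage from a general socle-equivalent algebra to one of the form $P^f(\mathbb{E}_6)$ (deformation concentrated at the exceptional vertex) is exactly the content of the lemma. For the same reason, invoking \cite{B} does not close the gap, since \cite{B} classifies algebras already presented as $P^f(\mathbb{E}_6)$, and your proposed single-arrow correction $a_0 \mapsto a_0 + \theta w$ can eliminate at most one of the two parameters.
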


\begin{proof}
The algebra $A$ is isomorphic to a bound quiver algebra $A'$ 
of the quiver $Q_{\mathbb{E}_6}$
\[
    \xymatrix{
        &&0 \ar@<.5ex>^{a_0}[d] \\
        1 \ar@<.5ex>^{a_1}[r] & 2 \ar@<.5ex>^{\bar{a}_1}[l] \ar@<.5ex>^{a_2}[r] &
        3 \ar@<.5ex>^{\bar{a}_2}[l] \ar@<.5ex>^{a_3}[r] \ar@<.5ex>^{\bar{a}_0}[u] &
        4 \ar@<.5ex>^{\bar{a}_3}[l] \ar@<.5ex>^{a_4}[r] &
        5 \ar@<.5ex>^(.5){\bar{a}_4}[l] \\
    }
\]
bound by the relations:
\begin{gather*}
  a_0 \bar{a}_0 = \theta_0 a_0 \bar{a}_2 a_2 \bar{a}_0 a_0 (\bar{a}_2 a_2)^2 \bar{a}_0
  , \ \
  a_1 \bar{a}_1 = 0
  , \ \
  \bar{a}_1 a_1 + a_2 \bar{a}_2 = 0
 ,\\
  \bar{a}_0 a_0 + \bar{a}_2 a_2 + a_3 \bar{a}_3 = \theta_3 (\bar{a}_2 a_2)^2 \bar{a}_0 a_0 (\bar{a}_2 a_2)^2
  , \ \
  \bar{a}_3 a_3 + a_4 \bar{a}_4 = 0
  , \ \
  \bar{a}_4 a_4 = 0
 ,\\
  a_0 \bar{a}_0 a_0 = 0
  , \ \
  \bar{a}_0 a_0 \bar{a}_0 = 0
  , \ \
  a_0 (\bar{a}_2 a_2 + a_3 \bar{a}_3) = 0
  , \ \
  (\bar{a}_2 a_2 + a_3 \bar{a}_3) \bar{a}_0 = 0
 ,
 \\
  a_2 (\bar{a}_0 a_0 + \bar{a}_2 a_2 + a_3 \bar{a}_3) = 0
  , \ \
  (\bar{a}_0 a_0 + \bar{a}_2 a_2 + a_3 \bar{a}_3) \bar{a}_2 = 0
 ,\\
  \bar{a}_3 (\bar{a}_0 a_0 + \bar{a}_2 a_2 + a_3 \bar{a}_3) = 0
  , \ \
  (\bar{a}_0 a_0 + \bar{a}_2 a_2 + a_3 \bar{a}_3) a_3 = 0
  ,
\end{gather*}
for some coefficients $\theta_0, \theta_3 \in K$.
We will show that the algebras $P(\mathbb{E}_6)$ and $A'$ are isomorphic.
Observe first that, in these algebras, we have the equalities
\begin{gather}
\label{eq:1}
\tag{1}
 (\bar{a}_0 a_0)^2 = \bar{a}_0 (a_0 \bar{a}_0) a_0 = 0, \\
\label{eq:2}
\tag{2}
 (\bar{a}_2 a_2)^3 = \bar{a}_2 (a_2 \bar{a}_2)^2 a_2
     = - \bar{a}_2 (\bar{a}_1 a_1)^2 a_2 = 0,
\end{gather}
and
\begin{align}
 0 &= a_3 (a_4 \bar{a}_4)^2 \bar{a}_3
    = - (a_3 \bar{a}_3)^3
    = (\bar{a}_0 a_0 + \bar{a}_2 a_2)^3
   \nonumber \\
\label{eq:3}
\tag{3}
  &=\bar{a}_0 a_0 \bar{a}_2 a_2 \bar{a}_0 a_0
  + \bar{a}_2 a_2 \bar{a}_0 a_0 \bar{a}_2 a_2
  + (\bar{a}_2 a_2)^2 \bar{a}_0 a_0
  + \bar{a}_0 a_0 (\bar{a}_2 a_2)^2 .
\end{align}
Then from (\ref{eq:2}) and (\ref{eq:3}) we have the equality
\begin{gather}
\label{eq:4}
\tag{4}
 (\bar{a}_2 a_2)^2 \bar{a}_0 a_0 (\bar{a}_2 a_2)^2
   +
 \bar{a}_0 a_0 \bar{a}_2 a_2 \bar{a}_0 a_0 (\bar{a}_2 a_2)^2
   = 0 .
\end{gather}
Let $\varphi : P(\mathbb{E}_6) \to A'$ be the homomorphism of algebras 
determined
on the arrows as follows:
\begin{gather*}
  \varphi(a_0) = a_0 + \theta_0 a_0 \bar{a}_2 a_2 \bar{a}_0 a_0 (\bar{a}_2 a_2)^2
  , \,
  \varphi(a_2) = a_2 + (\theta_0 + \theta_3) a_2 \bar{a}_2 a_2 \bar{a}_0 a_0 (\bar{a}_2 a_2)^2
  ,\\
  \varphi(a_k) = a_k
  , \mbox{ for } k \in \{ 1,3,4 \}
  , \ \
  \varphi(\bar{a}_l) = \bar{a}_l
  , \mbox{ for } k \in \{ 0,\dots,4 \}
  .
\end{gather*}
We prove that $\varphi$ is well defined.
Indeed, we have the equalities
\begin{align*}
    \varphi(a_0 \bar{a}_0)
    &= \varphi(a_0) \varphi(\bar{a}_0)
     = a_0 \bar{a}_0
       + \theta_0 a_0 \bar{a}_2 a_2 \bar{a}_0 a_0 (\bar{a}_2 a_2)^2 \bar{a}_0
     = 0,
    \\
    \varphi(a_1 \bar{a}_1)
    &= \varphi(a_1) \varphi(\bar{a}_1) = a_1 \bar{a}_1 = 0, 
    \\
    \varphi(\bar{a}_{4} a_{4}) &=
    \varphi(\bar{a}_{4}) \varphi(a_{4})
    = \bar{a}_{4} a_{4} = 0 , \\
    \varphi(\bar{a}_{3} a_{3} + a_{4} \bar{a}_{4}) &=
    \varphi(\bar{a}_{3}) \varphi(a_{3}) + \varphi(a_{4}) \varphi(\bar{a}_{4})
    = \bar{a}_{3} a_{3} + a_{4} \bar{a}_{4} = 0, \\
    \varphi(\bar{a}_{1} a_{1} + a_{2} \bar{a}_{2}) &=
    \varphi(\bar{a}_{1}) \varphi(a_{1}) + \varphi(a_{2}) \varphi(\bar{a}_{2})
    \\ &=
    \bar{a}_{1} a_{1} + a_{2} \bar{a}_{2} + (\theta_0 + \theta_3) a_2 \bar{a}_2 a_2 \bar{a}_0 a_0 (\bar{a}_2 a_2)^2 \bar{a}_{2}
    \\ &=
    \bar{a}_{1} a_{1} + a_{2} \bar{a}_{2} + (\theta_0 + \theta_3) a_2 \bar{a}_2 a_2 \bar{a}_0 a_0 \bar{a}_2 (\bar{a}_1 a_1)^2
    \\ &=
    \bar{a}_{1} a_{1} + a_{2} \bar{a}_{2} = 0
\end{align*}
(in the last equation we use $a_1 \bar{a}_1 = 0$).
Moreover, by (\ref{eq:4}), we obtain
\begin{align*}
    \varphi(\bar{a}_{0} a_{0} + \bar{a}_{2} a_{2} + a_{3} \bar{a}_{3}) &=
    \varphi(\bar{a}_{0}) \varphi(a_{0}) + \varphi(\bar{a}_{2}) \varphi(a_{2}) + \varphi(a_{3}) \varphi(\bar{a}_{3})
    \\ &=
    \bar{a}_{0} a_{0}
    + \theta_0 \bar{a}_0 a_0 \bar{a}_2 a_2 \bar{a}_0 a_0 (\bar{a}_2 a_2)^2
    \\ & \ \ \
    + \bar{a}_{2} a_{2}
    + (\theta_0 + \theta_3) (\bar{a}_2 a_2)^2 \bar{a}_0 a_0 (\bar{a}_2 a_2)^2
    + a_{3} \bar{a}_{3}
    \\ &=
    \bar{a}_{0} a_{0}
    + \bar{a}_{2} a_{2}
    + a_{3} \bar{a}_{3}
    + \theta_3 (\bar{a}_2 a_2)^2 \bar{a}_0 a_0 (\bar{a}_2 a_2)^2
    = 0, 
\end{align*}
so $\varphi$ is well defined.
Similarly, we construct the algebra homomorphism $\psi : A' \to P(\mathbb{E}_6)$ 
determined by on the arrows by
\begin{gather*}
  \psi(a_0) = a_0 - \theta_0 a_0 \bar{a}_2 a_2 \bar{a}_0 a_0 (\bar{a}_2 a_2)^2
  , \,
  \psi(a_2) = a_2 - (\theta_0 + \theta_3) a_2 \bar{a}_2 a_2 \bar{a}_0 a_0 (\bar{a}_2 a_2)^2
  ,\\
  \psi(a_k) = a_k
  , \mbox{ for } k \in \{ 1,3,4 \}
  , \ \
  \psi(\bar{a}_l) = \bar{a}_l
  , \mbox{ for } k \in \{ 0,\dots,4 \} ,
\end{gather*}
and prove that it is well defined.
The compositions $\psi \varphi$ and $\varphi \psi$ are
the identities on $P(\mathbb{E}_6)$ and $A'$, respectively.
Hence $\psi$ and $\varphi$ are isomorphisms
and the algebras $P(\mathbb{E}_6)$ and $A'$ are isomorphic.
Therefore, the algebras $P(\mathbb{E}_6)$ and $A$ are also isomorphic.
\end{proof}

\begin{lem}
Let $n > 4$ be an odd integer
and let $A$ be a self-injective algebra socle equivalent to 
the preprojective algebra $P(\mathbb{D}_n)$. 
Then the algebras $A$ and $P(\mathbb{D}_n)$ are isomorphic.
\end{lem}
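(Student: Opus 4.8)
The plan is to follow the pattern of the two preceding lemmas; throughout write $n = 2m+1$ with $m \geq 2$.

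\textbf{Step 1: a normal form for $A$.} Since $A$ is socle equivalent to $P(\mathbb{D}_n)$ it is self-injective and $A/\soc(A) \cong P(\mathbb{D}_n)/\soc(P(\mathbb{D}_n))$; hence $A$ is isomorphic to a bound quiver algebra $A' = KQ_{\mathbb{D}_n}/I'$ whose relations are the mesh relations $\sum_{a,\, ia=v} a\bar a$, modified at each vertex $v$ by an element of $\soc(A')$. The decisive structural fact is the form of the Nakayama permutation $\nu$ of $P(\mathbb{D}_n)$: for $n$ odd it interchanges the two fork vertices $0$ and $1$ and fixes $2,3,\dots,n-1$. Consequently $e_0 A' e_0 \cap \soc(A') = 0 = e_1 A' e_1 \cap \soc(A')$, so in $A'$ the relations $a_0\bar a_0 = 0$ and $a_1\bar a_1 = 0$ hold exactly, while the mesh relation at a vertex $l$ with $2 \le l \le n-1$ may only be altered by a scalar $\theta_l$ times the one-dimensional socle of $e_l A' e_l$. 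I would then apply a sequence of isomorphisms of $A'$ modifying the tail arrows $a_2,\dots,a_{n-3}$ by long correcting paths in order to move all deformation onto the exceptional vertex $2$, reducing to the case where the only deformed relation is
\[
  \bar a_0 a_0 + \bar a_1 a_1 + a_2\bar a_2 + \theta\, s = 0, \qquad s := (\bar a_0 a_0 \, \bar a_1 a_1)^{m-1} \bar a_0 a_0 ,
\]
for some $\theta \in K$; here $s$ spans $\soc(e_2 A' e_2)$ (it corresponds to the socle generator $(xy)^{m-1}x$ of $R(\mathbb{D}_n)$ under $x = \bar a_0 a_0$, $y = \bar a_1 a_1$), and because $n-2$ is \emph{odd} the word $s$ begins and ends with the same block $\bar a_0 a_0$.

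\textbf{Step 2: absorbing $\theta$.} Let $w$ be the path $0 \to 2$ obtained from $s$ by deleting its first arrow $\bar a_0$, so that $\bar a_0 w = s$; since $n-2$ is odd, $w$ ends with the arrow $a_0$, whence $w\bar a_0 = 0$ in $A'$ because $a_0\bar a_0 = 0$. I will define $\varphi\colon P(\mathbb{D}_n) \to A'$ by $\varphi(a_0) = a_0 + \theta w$ and $\varphi(\alpha) = \alpha$ on every other arrow $\alpha$, and check that it is well defined: $\varphi(a_0\bar a_0) = a_0\bar a_0 + \theta w\bar a_0 = 0$; the mesh relations at $1$ and at $3,\dots,n-1$ contain no modified arrow, so they go to the (now undeformed) mesh relations of $A'$; $\varphi(\bar a_0 a_0 + \bar a_1 a_1 + a_2\bar a_2) = (\bar a_0 a_0 + \bar a_1 a_1 + a_2\bar a_2) + \theta\bar a_0 w = -\theta s + \theta s = 0$; and $\varphi\big((\bar a_0 a_0 + \bar a_1 a_1)^{n-2}\big) = (\bar a_0 a_0 + \bar a_1 a_1 + \theta s)^{n-2} = (\bar a_0 a_0 + \bar a_1 a_1)^{n-2}$, since every other monomial in the expansion has $s$ multiplied by a factor of positive degree and so vanishes, while $(\bar a_0 a_0 + \bar a_1 a_1)^{n-2} = 0$ in $A'$ (rewrite it via the mesh relations as $\pm(a_2\bar a_2)^{n-2}$ and slide the zigzag out to vertex $n-1$, where $\bar a_{n-2}a_{n-2} = 0$). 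Defining $\psi\colon A' \to P(\mathbb{D}_n)$ by $\psi(a_0) = a_0 - \theta w$ and the identity on the other arrows, the same computations show $\psi$ is well defined, and $\psi\varphi$, $\varphi\psi$ are the identity on generators — the extra terms in $\theta$ that occur are paths strictly longer than $s$, hence lie beyond the socle and vanish. Thus $P(\mathbb{D}_n) \cong A' \cong A$.

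\textbf{The main obstacle} is entirely in Step 1: one must show that the a priori large family of socle deformations of the mesh presentation — one scalar at each of the $n-2$ vertices fixed by $\nu$ — collapses, up to isomorphism, to the single scalar $\theta$ at vertex $2$ above, which requires the iterative ``pushing'' of deformations along the tail together with a precise description of $\soc(e_l A' e_l)$. The parity of $n$ enters decisively in Step 2: for $n$ odd, $s$ begins and ends with $\bar a_0 a_0$, so $w$ ends in $a_0$ and is killed by $a_0\bar a_0 = 0$, which is exactly what makes $\varphi$ respect the relation $a_0\bar a_0 = 0$; for $n$ even the analogue $(\bar a_0 a_0\,\bar a_1 a_1)^{m-1}$ begins and ends with different blocks, the cancellation fails, and indeed $P^{*}(\mathbb{D}_{2m})$ is then a genuine socle deformation of $P(\mathbb{D}_{2m})$, not isomorphic to it.
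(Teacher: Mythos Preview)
Your approach is correct in outline but is organised differently from the paper's proof, and the bulk of the work (your Step~1) is only sketched.

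The paper does \emph{not} first reduce to a single parameter at vertex~$2$. Instead it writes $A$ directly as a bound quiver algebra $A'$ with one deformation scalar $\theta_l$ at each vertex $l\in\{2,\dots,n-1\}$ (the fork relations $a_0\bar a_0=0$, $a_1\bar a_1=0$ are indeed undeformed, for exactly the Nakayama reason you give), and then produces a \emph{single} isomorphism $\varphi\colon P(\mathbb{D}_n)\to A'$ by modifying simultaneously $a_1$ and all of $a_2,\dots,a_{n-2}$ with suitable alternating sums of the $\theta_i$. The parity of $n$ enters through the identity
\[
  a_1\,a_2\cdots a_{n-2}\,\bar a_{n-2}\cdots\bar a_2\,\bar a_1
  \;=\;\pm\,a_1(a_2\bar a_2)^{\,n-3}\bar a_1\;=\;0,
\]
which holds because $n-3$ is even (the paper proves $a_1(a_2\bar a_2)^{2k}\bar a_1=0$ for all $k$). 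This is what makes the modification of $a_1$ compatible with $a_1\bar a_1=0$.

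Your route instead pushes all deformations to vertex~$2$ first (Step~1) and then absorbs the remaining~$\theta$ by modifying $a_0$ (Step~2). Step~2 is clean and correct; your parity observation --- that for $n=2m+1$ the socle word $s=(\bar a_0a_0\,\bar a_1a_1)^{m-1}\bar a_0a_0$ begins and ends with $\bar a_0a_0$, so the correction $w$ ends in $a_0$ and $w\bar a_0=0$ --- is a perfectly good substitute for the paper's identity above. What you gain is modularity (Step~1 is essentially the same reduction the paper carries out for even~$n$); what you lose is that Step~1 still has to be written out in full, and it is no shorter than the paper's one-shot isomorphism. Two small points: the tail arrows to be modified in Step~1 are $a_2,\dots,a_{n-2}$ (not $a_{n-3}$), and note that your $s$ is expressed in $\bar a_0a_0,\bar a_1a_1$ while the paper's socle elements at the tail vertices are written via tail paths, so the ``pushing'' in Step~1 needs the translation between these two descriptions via the (deformed) mesh relation at vertex~$2$.
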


\begin{proof}
Let $n = 2m+1$ for some integer $m \geq 2$.
The algebra $A$ is isomorphic 
to a bound quiver algebra $A'$ 
of the quiver $Q_{\mathbb{D}_n}$
\[
    \xymatrix@C=1pc{
        0 \ar@<.5ex>^{a_0}[rrd] \\
         && 2 \ar@<.5ex>^{\bar{a}_0}[llu] \ar@<.5ex>^{\bar{a}_1}[lld] \ar@<.5ex>^{a_2}[rr] && 3
        \ar@<.5ex>^{\bar{a}_2}[ll]
         \ar@{-}@<.5ex>[r] & \ar@<.5ex>[l] \dots \ar@<.5ex>[r] & \ar@{-}@<.5ex>[l]
         n-2 \ar@<.5ex>^{a_{n-2}}[rr] &&
         n - 1 \ar@<.5ex>^(.5){\bar{a}_{n-2}}[ll] \\
        1 \ar@<.5ex>^{a_1}[rru] \\
    }
\]
bound by the relations:
\begin{gather*}
  a_0 \bar{a}_0 = 0 
  , \ \ 
  a_1 \bar{a}_1 = 0
 ,\\
  \bar{a}_0 a_0 + \bar{a}_1 a_1 +  a_2 \bar{a}_2
  +
  \theta_2  
   \bar{a}_1 a_1 a_2 \dots a_{n-2} \bar{a}_{n-2} \dots \bar{a}_3 \bar{a}_2
   = 0
 ,\\
  a_0 (\bar{a}_1 a_1 + a_2 \bar{a}_2) = 0
  , \ \ 
  a_1 (\bar{a}_0 a_0 + a_2 \bar{a}_2) = 0
  , \ \ 
  \bar{a}_2 (\bar{a}_0 a_0 + \bar{a}_1 a_1 + a_2 \bar{a}_2) = 0
 ,\\
  (\bar{a}_1 a_1 + a_2 \bar{a}_2) \bar{a}_0 = 0
  , \ \ 
  (\bar{a}_0 a_0 + a_2 \bar{a}_2) \bar{a}_1 = 0
  , \ \ 
  (\bar{a}_0 a_0 + \bar{a}_1 a_1 + a_2 \bar{a}_2) a_2 = 0
 ,\\
  \bar{a}_{k-1} a_{k-1} +  a_{k} \bar{a}_{k}
  +
  \theta_k \bar{a}_{k-1} \dots \bar{a}_2 
           \bar{a}_1 a_1 a_2 \dots a_{n-2} \bar{a}_{n-2} \dots \bar{a}_k
   = 0
 ,\\
  a_{k-1} (\bar{a}_{k-1} a_{k-1} + a_k \bar{a}_k) = 0
  , \ \ 
  (\bar{a}_{k-1} a_{k-1} + a_k \bar{a}_k) \bar{a}_{k-1} = 0
 ,\\
  \bar{a}_k (\bar{a}_{k-1} a_{k-1} + a_k \bar{a}_k) = 0
  , \ \ 
  (\bar{a}_{k-1} a_{k-1} + a_k \bar{a}_k) a_k = 0
  , \mbox{ for } k \in \{ 3, \dots, n-2 \}
 ,\\
  \bar{a}_{n-2} a_{n-2}
  +
  \theta_{n-2} \bar{a}_{n-2} \dots \bar{a}_2  
               \bar{a}_1 a_1 a_2 \dots a_{n-2} 
   = 0
 ,\\
  \bar{a}_{n-2} a_{n-2} \bar{a}_{n-2}
   = 0
  , \ \ 
  a_{n-2} \bar{a}_{n-2} a_{n-2}
   = 0
  ,
\end{gather*}
for some coefficients $\theta_2, \dots, \theta_{n-1} \in K$.
We will show that the algebras $P(\mathbb{D}_{n})$ and $A'$ are isomorphic.

Observe first, that 
for any positive integer $k$,
the following equalities hold
in the algebras
$P(\mathbb{D}_{n})$ and $A'$
\[
  a_0 (a_2 \bar{a}_2)^{2k} \bar{a}_0
  = 0 
\quad   
  \mbox{ and }
\quad   
  a_1 (a_2 \bar{a}_2)^{2k} \bar{a}_1
  = 0 
  .
\]
Indeed, from the equalities $a_0 \bar{a}_0 = 0$ and $a_1 \bar{a}_1 = 0$, 
we obtain
\begin{align*}
  a_0 (a_2 \bar{a}_2)^{2k} \bar{a}_0
  &=
    a_0 (- \bar{a}_0 a_0 - \bar{a}_1 a_1)^{2k} \bar{a}_0
   =
    a_0 \bar{a}_1 a_1 (\bar{a}_0 a_0 + \bar{a}_1 a_1)^{2k-1} \bar{a}_0
\\&=
    a_0 \bar{a}_0 a_0 \bar{a}_1 a_1 (\bar{a}_0 a_0 + \bar{a}_1 a_1)^{2k-2} \bar{a}_0
   =
    \dots
   =
    a_0 (\bar{a}_1 a_1 \bar{a}_0 a_0)^k \bar{a}_0
   = 0
\end{align*}
and
\begin{align*}
  a_1 (a_2 \bar{a}_2)^{2k} \bar{a}_1
  &=
    a_1 (\bar{a}_0 a_0 + \bar{a}_1 a_1)^{2k} \bar{a}_1
   =
    a_1 \bar{a}_0 a_0 (\bar{a}_0 a_0 + \bar{a}_1 a_1)^{2k-1} \bar{a}_1
\\&=
    a_1 \bar{a}_0 a_0 \bar{a}_1 a_1 (\bar{a}_0 a_0 + \bar{a}_1 a_1)^{2k-2} \bar{a}_1
   =
    \dots
   =
    a_1 (\bar{a}_0 a_0 \bar{a}_1 a_1)^k \bar{a}_1
   = 0
   .
\end{align*}
Hence we obtain also
\begin{align*}
  a_1 a_2 \dots a_{n-2} \bar{a}_{n-2} \dots \bar{a}_2 \bar{a}_1
  &=
  (-1)^{\frac{(n-3)(n-4)}{2}} 
  a_1 (a_2 \bar{a}_2)^{n-3} \bar{a}_1
\\&
  =
  (-1)^{2m^2-5m+3} 
  a_1 (a_2 \bar{a}_2)^{2(m-1)} \bar{a}_1
  = 0
   .
\end{align*}

Now we will construct an algebra isomorphism from $P(\mathbb{D}_{n})$ to $A'$.

Let
$\varphi : P(\mathbb{D}_{n}) \to A'$
be the homomorphism
given on the arrows as follows
\begin{gather*}
  \varphi(a_0) = a_0
  , \, 
  \qquad
  \varphi(a_1) = a_1 + \left( \sum_{i=2}^{n-1} (-1)^{i} \theta_i \right) 
         a_1 a_2 \dots a_{n-2} \bar{a}_{n-2} \dots \bar{a}_2
  ,\\
  \varphi(a_k) = a_k + \left( \sum_{i=k+1}^{n-1} (-1)^{i+k+1} \theta_i \right) 
         \bar{a}_{k-1} \dots \bar{a}_2 \bar{a}_1 a_1 a_2 \dots a_{n-2} \bar{a}_{n-2} \dots \bar{a}_{k+1}
  ,\\
  \varphi(\bar{a}_l) = \bar{a}_l 
  , 
\quad
  \mbox{ for }  
\quad
  k \in \{ 2,\dots,n-2 \}, l \in \{ 0,\dots,n-1 \} 
  .
\end{gather*}
We claim that $\varphi$ is well defined.
Indeed, we have the equalities
\begin{align*}
    \varphi(a_0 \bar{a}_0) 
    &= \varphi(a_0) \varphi(\bar{a}_0) 
     = a_0 \bar{a}_0 
     = 0, 
    \\
    \varphi(a_1 \bar{a}_1) 
    &= \varphi(a_1) \varphi(\bar{a}_1) 
     \\ &= a_1 \bar{a}_1 
       +\left( \sum_{i=2}^{n-1} (-1)^{i} \theta_i \right) 
         a_1 a_2 \dots a_{n-2} \bar{a}_{n-2} \dots \bar{a}_2 \bar{a}_1
     \\ &= a_1 \bar{a}_1 
     = 0,
    \\
    \varphi(\bar{a}_{0} a_{0} + \bar{a}_{1} a_{1} &+ a_{2} \bar{a}_{2}) = 
    \varphi(\bar{a}_{0}) \varphi(a_{0}) + 
    \varphi(\bar{a}_{1}) \varphi(a_{1}) + \varphi(a_{2}) \varphi(\bar{a}_{2})
    \\ &= 
    \bar{a}_{0} a_{0} + \bar{a}_{1} a_{1} + a_{2} \bar{a}_{2}
    \\ & \ \ 
     + \left( \sum_{i=2}^{n-1} (-1)^{i} \theta_i + \sum_{i=3}^{n-1} (-1)^{i+3} \theta_i \right) 
         \bar{a}_1 a_1 a_2 \dots a_{n-2} \bar{a}_{n-2} \dots \bar{a}_2
    \\ &= 
    \bar{a}_{0} a_{0} + \bar{a}_{1} a_{1} + a_{2} \bar{a}_{2}
     + \theta_2
         \bar{a}_1 a_1 a_2 \dots a_{n-2} \bar{a}_{n-2} \dots \bar{a}_2
    \\ &= 0 ,
    \\
    \varphi(\bar{a}_{k-1} a_{k-1} + &a_{k} \bar{a}_{k}) = 
    \varphi(\bar{a}_{k-1}) \varphi(a_{k-1}) + \varphi(a_{k}) \varphi(\bar{a}_{k})
    \\ &= 
    \bar{a}_{k-1} a_{k-1} + a_{k} \bar{a}_{k}
     \\ & \ \  
         + \left( \sum_{i=k}^{n-1} (-1)^{i+k} \theta_i + \sum_{i=k+1}^{n-1} (-1)^{i+k+1} \theta_i \right) 
     \\ & \ \qquad  
         \bar{a}_{k-1} \dots \bar{a}_2 \bar{a}_1 a_1 a_2 \dots a_{n-2} \bar{a}_{n-2} \dots \bar{a}_{k}
    \\ &= 
    \bar{a}_{k-1} a_{k-1} + a_{k} \bar{a}_{k}
         + \theta_k 
         \bar{a}_{k-1} \dots \bar{a}_2 \bar{a}_1 a_1 a_2 \dots a_{n-2} \bar{a}_{n-2} \dots \bar{a}_{k}
         ,
    \\ &= 0 ,
    \\
    \varphi(\bar{a}_{n-2} a_{n-2}) &= 
    \varphi(\bar{a}_{n-2}) \varphi(a_{n-2}))
    \\ &= 
    \bar{a}_{n-2} a_{n-2}
         + \theta_{n-1} 
         \bar{a}_{n-2} \dots \bar{a}_2 \bar{a}_1 a_1 a_2 \dots a_{n-2}
     = 0
\end{align*}
(with $k \in \{3,\dots,n-2\}$), so $\varphi$ is well defined.

Similarly, we construct the homomorphism 
of algebras $\psi : A \to P(\mathbb{D}_n)$ 
given on the arrows by
\begin{gather*}
  \psi(a_0) = a_0
  , \, 
  \quad
  \psi(a_1) = a_1 - \left( \sum_{i=2}^{n-1} (-1)^{i} \theta_i \right) 
         a_1 a_2 \dots a_{n-2} \bar{a}_{n-2} \dots \bar{a}_2
  ,\\
  \psi(a_k) = a_k - \left( \sum_{i=k+1}^{n-1} (-1)^{i+k+1} \theta_i \right) 
         \bar{a}_{k-1} \dots \bar{a}_2 \bar{a}_1 a_1 a_2 \dots a_{n-2} \bar{a}_{n-2} \dots \bar{a}_{k+1}
  ,\\
  \psi(\bar{a}_l) = \bar{a}_l 
  \quad
  , \mbox{ for }  k \in \{ 2,\dots,n-2 \}, l \in \{ 0,\dots,n-1 \} 
  ,
\end{gather*}
and prove that it is well defined.

Then we observe that $\psi \varphi$ and $\varphi \psi$ 
are the identities on the algebras $P(\mathbb{D}_n)$ and $A'$,
respectively.
Hence $\psi$ and $\varphi$ are isomorphisms
and the algebras $P(\mathbb{D}_n)$ and $A'$ are isomorphic.
Therefore, the algebras $P(\mathbb{D}_n)$ and $A$ are also isomorphic.
\end{proof}

This concludes the proof of Proposition~\ref{prop:no-def}.

\section{Algebras with proper socle deformations}
\label{sec:only-canoninical}

The aim of this section is to prove the following proposition.

\begin{prop}
\label{prop:iso}
Let $A$ be a self-injective algebra socle equivalent to
the preprojective algebra $P(\Delta)$ of generalized Dynkin type 
$\Delta \in \{ \mathbb{D}_{2m}, 
\mathbb{E}_7, \mathbb{E}_8, \mathbb{L}_{n} \}$, 
for some $m,n \geq 2$.
Then $A$ is isomorphic to 
one of the algebras $P(\Delta)$ or $P^*(\Delta)$.
\end{prop}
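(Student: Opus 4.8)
The plan is to follow the same strategy used in Section~\ref{sec:no-def}: realize $A$ as a bound quiver algebra $A'$ on the quiver $Q_\Delta$ with relations that are ``socle-deformed'' versions of the defining relations of $P(\Delta)$, and then, by an explicit change of generators, normalize the deformation parameters so that only one essential parameter survives; this surviving parameter then distinguishes $P(\Delta)$ from $P^*(\Delta)$. First I would invoke the fact that $A$ and $P(\Delta)$ are socle equivalent, so $A/\soc(A) \cong P(\Delta)/\soc(P(\Delta))$, together with self-injectivity of $A$, to conclude that the Gabriel quiver of $A$ is $Q_\Delta$ and that $A \cong KQ_\Delta/I$ where $I$ agrees with $I_\Delta$ modulo the socle. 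Since the socle of $P(\Delta)$ is spanned by the (unique up to scalar) maximal nonzero paths at each vertex, the relations of $A'$ are obtained from those of $P(\Delta)$ by adding, to each mesh relation $\sum_{ia=v} a\bar a$, a scalar multiple of the maximal path from $v$ to $v$ — these are the socle elements $\theta_v \cdot(\text{maximal cycle at }v)$. This produces the ``general socle deformation'' presentation with a family of parameters $\theta_v \in K$, exactly analogous to the $\theta$, $\theta_k$ appearing in the lemmas already proved.

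Next I would analyze which combinations of the $\theta_v$ are genuinely rigid. The key computational input is the multiplication table of $P(\Delta)$ near the socle: one computes, exactly as in equations \eqref{eq:1}--\eqref{eq:4} and the displayed vanishing identities in the $\bD_n$ and $\bE_6$ lemmas, which products of the $\bar a_i a_i$ (or $\varepsilon$, for $\bL_n$) vanish. Using the relation $R(\Delta) \cong e_\Delta P(\Delta) e_\Delta$ and the explicit description of $R(\Delta)$ as $K\langle x,y\rangle/(x^2,y^2,(x+y)^{n-2})$, etc., one identifies the socle of $R(\Delta)$ as one-dimensional, spanned by the class of the longest monomial; this pins down the single admissible element $f$ (up to scalar) that gives a proper socle deformation, namely the cosets of $(xy)^{m-1}$, $(yx)^2y$, $(xy)^{3n-17}$, $x^{2n-2}$ listed in Section~\ref{sec:preliminaries}. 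The point is that for $\Delta \in \{\bD_{2m}, \bE_7, \bE_8, \bL_n\}$ the exceptional vertex is a loop-vertex in the ``folded'' sense (source equals target of the maximal cycle), so this $f$-term is \emph{not} absorbable into a change of arrows, whereas the remaining $\theta_v$ at non-exceptional vertices are.

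Then, for each of the four types, I would write down explicitly an algebra homomorphism $\varphi : P^f(\Delta) \to A'$ (with $f$ either $0$ or the canonical admissible element according to whether the surviving parameter is $0$ or not) defined on arrows by adding to each arrow a suitable scalar multiple of a long path, together with the inverse $\psi$ obtained by negating those scalars, check that $\varphi, \psi$ respect all the defining relations (this is the routine but lengthy verification, entirely parallel to the three lemmas of Section~\ref{sec:no-def}), and observe $\psi\varphi = \id$, $\varphi\psi = \id$. The scalars in $\varphi$ are chosen to solve a triangular system in the $\theta_v$: the innermost parameters can always be cleared, leaving the coefficient of the maximal cycle at the exceptional vertex, which after rescaling is either $0$ or $1$, i.e.\ $A \cong P(\Delta)$ or $A \cong P^*(\Delta)$. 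The main obstacle I expect is the bookkeeping for $\bD_{2m}$ and $\bE_8$: the long paths involved are genuinely long, the signs coming from the mesh relations accumulate (as in the $(-1)^{(n-3)(n-4)/2}$ factor in the $\bD_n$ lemma), and one must carefully track which products already vanish in $A'$ by the socle-deformed relations themselves (not merely in $P(\Delta)$) to be sure the correction terms are well defined and that the parity of $m$ for $\bD_{2m}$ genuinely forces the source$=$target condition that makes the $f$-deformation nonabsorbable — this last case-distinction on parity is precisely why $\bD_{2m+1}$ landed in Proposition~\ref{prop:no-def} while $\bD_{2m}$ lands here.
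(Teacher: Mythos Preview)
Your proposal is correct and follows essentially the same route as the paper: present $A$ as a bound quiver algebra $A'$ on $Q_\Delta$ with a family of socle-deformation parameters $\theta_v$, then construct explicit homomorphisms (adding long-path corrections to the arrows, with inverse obtained by sign-flipping) that absorb all but one linear combination of the $\theta_v$ into a single parameter $\theta$ living at the exceptional vertex, and finally rescale all arrows by a root of $\theta$ to land on $P(\Delta)$ or $P^*(\Delta)$. The paper organizes this into two explicit stages (first $A' \cong A''$ with a single parameter $\theta$, then $A'' \cong P(\Delta)$ or $P^*(\Delta)$ by rescaling), and for $\mathbb{E}_7$, $\mathbb{E}_8$ derives a chain of monomial identities in $x = \bar a_0 a_0$, $y = \bar a_2 a_2$, $z = a_3\bar a_3$ (analogous to, but substantially longer than, the equations you cite from the $\mathbb{E}_6$ lemma) needed to verify the exceptional-vertex relation; your anticipation that this bookkeeping is the main obstacle is accurate.
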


We note that at this moment we do not discuss
whether and when the algebras 
$P(\Delta)$ and $P^*(\Delta)$
are isomorphic. 
It will be discussed in the next two sections.

We divide the proof of the 
proposition into four cases.

\begin{lem}
Let $n \geq 4$ be an even integer and let $A$ be
a self-injective algebra socle equivalent to the preprojective algebra
of type $\mathbb{D}_n$.
Then $A$ is isomorphic to one of the algebras 
$P(\mathbb{D}_n)$ or $P^{*}(\mathbb{D}_n)$.
\end{lem}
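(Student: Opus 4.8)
The plan is to follow the same template used in the preceding lemmas of Section~\ref{sec:no-def}: start from an arbitrary self-injective algebra $A$ socle equivalent to $P(\mathbb{D}_n)$, reduce it to an explicit bound quiver presentation $A'$, and then either construct a mutual pair of inverse algebra homomorphisms to $P(\mathbb{D}_n)$ or to $P^*(\mathbb{D}_n)$, depending on the relevant deformation parameter. Since $A/\soc(A) \cong P(\mathbb{D}_n)/\soc(P(\mathbb{D}_n))$, the algebra $A$ is a bound quiver algebra on the quiver $Q_{\mathbb{D}_n}$ whose relations agree with those of $P(\mathbb{D}_n)$ modulo the socle; because the socle of $P(\mathbb{D}_n)$ at the exceptional vertex $2$ is one-dimensional and is spanned by the maximal nonzero path through $2$ (essentially $(\bar a_0 a_0 + \bar a_1 a_1)^{n-2}$ up to the loops), the relations of $A'$ are those of $P^f(\mathbb{D}_n)$ for an admissible $f \in \rad^2 R(\mathbb{D}_n)$. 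Writing $n = 2m$, the nonzero part of $\rad^2 R(\mathbb{D}_{2m}) = K\langle x,y\rangle/(x^2,y^2,(x+y)^{2m-2})$ modulo even higher socle degree is governed by a finite list of monomials, and the upshot is that $A' = P^f(\mathbb{D}_{2m})$ with $f$ a $K$-linear combination of the socle-degree monomials of $R(\mathbb{D}_{2m})$, i.e.\ essentially a scalar multiple of the class of $(xy)^{m-1}$ (together possibly with $(yx)^{m-1}$, which coincides in the socle).

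The first real step is therefore to pin down precisely which admissible elements $f$ can occur, and to show that after rescaling one of the arrows (the standard change-of-variables trick already used for $\mathbb{A}_n$ and $\mathbb{E}_6$) every nonzero admissible $f$ gives an algebra isomorphic to $P^*(\mathbb{D}_{2m}) = P^f(\mathbb{D}_{2m})$ with $f$ the class of $(xy)^{m-1}$, while $f=0$ of course gives $P(\mathbb{D}_{2m})$. Concretely, I would write the deformation term as $\theta \cdot (\text{socle monomial})$ for a scalar $\theta \in K$, and if $\theta \neq 0$, define $\varphi : P^*(\mathbb{D}_{2m}) \to A'$ on arrows by a correction of the form $\varphi(a_i) = a_i + (\text{scalar})\cdot(\text{long path factoring through vertex } i)$, mimicking the $\mathbb{A}_n$ and $\mathbb{E}_6$ constructions above, with the scalar chosen so that the defining relations are respected; the inverse $\psi$ is obtained by flipping the sign of the correction. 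The verification that $\varphi$ and $\psi$ are well defined is a direct relation-by-relation check exactly parallel to the earlier lemmas, using the auxiliary vanishing identities $a_0(a_2\bar a_2)^{2k}\bar a_0 = 0$ and $a_1(a_2\bar a_2)^{2k}\bar a_1 = 0$ established in the proof of the previous lemma (these continue to hold here), and $\psi\varphi$, $\varphi\psi$ are the identities because the corrections compose to terms annihilated by the relations.

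The main obstacle I anticipate is the bookkeeping in the socle reduction: unlike the odd case $\mathbb{D}_{2m+1}$ treated in the previous lemma — where the maximal nonzero path through the exceptional vertex $2$ has distinct source and target, killing any deformation — here, because $n=2m$ is even, the relevant long path through vertex $2$ is a genuine loop at $2$ and survives in $\End$ of the projective at the exceptional vertex, so a nontrivial socle deformation is possible and one must carefully identify the one-parameter family of admissible $f$ and check that a single rescaling normalizes $\theta$ to $1$ (over any field!) — so that, in this lemma, both $P(\mathbb{D}_{2m})$ and $P^*(\mathbb{D}_{2m})$ genuinely occur and no third algebra does. The subtlety is purely that one cannot yet decide whether $P(\mathbb{D}_{2m}) \cong P^*(\mathbb{D}_{2m})$ (that is the content of the later char-$2$ analysis), so the statement deliberately only asserts that $A$ is isomorphic to one of the two; the proof of this lemma stops once that dichotomy is established. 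I expect the argument to occupy a page of routine but careful relation-checking, structurally identical to the $\mathbb{D}_{2m+1}$ proof above but retaining the single surviving deformation parameter.
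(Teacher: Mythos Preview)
Your first reduction step contains a genuine gap. You assert that, because the socle at the exceptional vertex $2$ is one-dimensional, the algebra $A$ is already isomorphic to some $P^f(\mathbb{D}_{2m})$. This is not so. For $n = 2m$ even, the Nakayama permutation of $P(\mathbb{D}_n)$ is the identity, so \emph{every} vertex $i$ (including $0$ and $1$, unlike the odd case) has a nonzero maximal path from $i$ to $i$. Hence a general self-injective algebra socle equivalent to $P(\mathbb{D}_n)$ carries independent deformation parameters $\theta_0,\theta_1,\theta_2,\ldots$ at every vertex, not just at vertex $2$. The bulk of the paper's proof is exactly the step you skip: an explicit isomorphism from this multi-parameter $A'$ to an intermediate algebra $A''$ in which the entire deformation is concentrated at vertex $2$ with the single parameter $\theta = \sum_i (-1)^i\theta_i$. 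Only then is one in the situation $A'' = P^f(\mathbb{D}_{2m})$ that you take as the starting point, after which your rescaling argument ($\lambda^{2n-3}=\theta$) is indeed correct and matches the paper.

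A related slip: the identities $a_0(a_2\bar a_2)^{2k}\bar a_0 = 0$ and $a_1(a_2\bar a_2)^{2k}\bar a_1 = 0$ from the odd-case proof do survive, but they are not what is needed here. In the odd case they were used because the socle element at vertex $1$ involved $(a_2\bar a_2)^{n-3}$ with $n-3$ even; for $n=2m$ one has $n-3 = 2m-3$ odd, so these identities do not kill the socle elements at $0$ and $1$ --- which is precisely why $\theta_0$ and $\theta_1$ are genuinely present. The paper instead establishes and uses $(\bar a_0 a_0 + \bar a_1 a_1)\,a_2\cdots a_{n-2}\bar a_{n-2}\cdots\bar a_2 = 0$ (and its dual) to verify the relation at vertex $2$ under the collecting isomorphism.
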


\begin{proof}
Let $n = 2 m$ for some integer $m \geq 2$. 
The algebra $A$ is isomorphic 
to a bound quiver algebra $A'$ 
of the quiver $Q_{\mathbb{D}_n}$
\[
    \xymatrix@C=1pc{
        0 \ar@<.5ex>^{a_0}[rrd] \\
         && 2 \ar@<.5ex>^{\bar{a}_0}[llu] \ar@<.5ex>^{\bar{a}_1}[lld] \ar@<.5ex>^{a_2}[rr] && 3
        \ar@<.5ex>^{\bar{a}_2}[ll]
         \ar@{-}@<.5ex>[r] & \ar@<.5ex>[l] \dots \ar@<.5ex>[r] & \ar@{-}@<.5ex>[l]
         n-2 \ar@<.5ex>^{a_{n-2}}[rr] &&
         n - 1 \ar@<.5ex>^(.5){\bar{a}_{n-2}}[ll] \\
        1 \ar@<.5ex>^{a_1}[rru] \\
    }
\]
bound by the relations:
\begin{gather*}
  a_0 \bar{a}_0 + \theta_0 a_0 a_2 a_3 \dots a_{n-2} \bar{a}_{n-2} \dots \bar{a}_2 \bar{a}_0
  = 0 
  , \ \ 
  a_0 \bar{a}_0 a_0 = 0
  , \ \ 
  \bar{a}_0 a_0 \bar{a}_0 = 0
  ,\\ 
  a_1 \bar{a}_1 + \theta_1 a_1  a_2 a_3 \dots a_{n-2} \bar{a}_{n-2} \dots \bar{a}_2 \bar{a}_1 = 0
  , \ \ 
  a_1 \bar{a}_1 a_1 = 0
  , \ \ 
  \bar{a}_1 a_1 \bar{a}_1 = 0
 ,\\
  \bar{a}_0 a_0 + \bar{a}_1 a_1 +  a_2 \bar{a}_2
  +
  \theta_2  
      \bar{a}_1 a_1 a_2 \dots a_{n-2} \bar{a}_{n-2} \dots \bar{a}_3 \bar{a}_2
   = 0
 ,\\
  a_0 (\bar{a}_1 a_1 + a_2 \bar{a}_2) = 0
  , \ \ 
  a_1 (\bar{a}_0 a_0 + a_2 \bar{a}_2) = 0
  , \ \ 
  \bar{a}_2 (\bar{a}_0 a_0 + \bar{a}_1 a_1 + a_2 \bar{a}_2) = 0
 ,\\
  (\bar{a}_1 a_1 + a_2 \bar{a}_2) \bar{a}_0 = 0
  , \ \ 
  (\bar{a}_0 a_0 + a_2 \bar{a}_2) \bar{a}_1 = 0
  , \ \ 
  (\bar{a}_0 a_0 + \bar{a}_1 a_1 + a_2 \bar{a}_2) a_2 = 0
 ,
 \\
  \bar{a}_{k-1} a_{k-1} +  a_{k} \bar{a}_{k}
  +
  \theta_k \bar{a}_{k-1} \dots \bar{a}_2 
           \bar{a}_1 a_1 a_2 \dots a_{n-2} \bar{a}_{n-2} \dots \bar{a}_k
   = 0
 ,\\
  a_{k-1} (\bar{a}_{k-1} a_{k-1} + a_k \bar{a}_k) = 0
  , \ \ 
  (\bar{a}_{k-1} a_{k-1} + a_k \bar{a}_k) \bar{a}_{k-1} = 0
 ,\\
  \bar{a}_k (\bar{a}_{k-1} a_{k-1} + a_k \bar{a}_k) = 0
  , \ \ 
  (\bar{a}_{k-1} a_{k-1} + a_k \bar{a}_k) a_k = 0
  , \mbox{ for } k \in \{ 3, \dots, n-2 \}
 ,\\
  \bar{a}_{n-2} a_{n-2}
  +
  \theta_{n-2} \bar{a}_{n-2} \dots \bar{a}_2  
               \bar{a}_1 a_1 a_2 \dots a_{n-2} 
   = 0
 ,\\
  \bar{a}_{n-2} a_{n-2} \bar{a}_{n-2} = 0
  , \ \ 
  a_{n-2} \bar{a}_{n-2} a_{n-2} = 0
  ,
\end{gather*}
for some coefficients
$\theta_0, \dots, \theta_{n-1} \in K$.
Moreover, let $A''$ be the bound quiver algebra 
of the quiver $Q_{\mathbb{D}_n}$
bound by the relations:
\begin{gather*}
  a_0 \bar{a}_0 = 0 
  , \ \ 
  a_1 \bar{a}_1 = 0
 ,\\
  \bar{a}_0 a_0 + \bar{a}_1 a_1 +  a_2 \bar{a}_2
  +
  \theta 
  \bar{a}_1 a_1 a_2 \dots a_{n-2} \bar{a}_{n-2} \dots \bar{a}_3 \bar{a}_2
   = 0
 ,\\
  a_0 (\bar{a}_1 a_1 + a_2 \bar{a}_2) = 0
  , \ \ 
  a_1 (\bar{a}_0 a_0 + a_2 \bar{a}_2) = 0
  , \ \ 
  \bar{a}_2 (\bar{a}_0 a_0 + \bar{a}_1 a_1 + a_2 \bar{a}_2) = 0
 ,
 \\
  (\bar{a}_1 a_1 + a_2 \bar{a}_2) \bar{a}_0 = 0
  , \ \ 
  (\bar{a}_0 a_0 + a_2 \bar{a}_2) \bar{a}_1 = 0
  , \ \ 
  (\bar{a}_0 a_0 + \bar{a}_1 a_1 + a_2 \bar{a}_2) a_2 = 0
 ,\\
  \bar{a}_{k-1} a_{k-1} +  a_{k} \bar{a}_{k}
   = 0
  , \mbox{ for } k \in \{ 3, \dots, n-2 \}
  , \ \ 
  \bar{a}_{n-2} a_{n-2}
   = 0
  ,
\end{gather*}
where $\theta = \sum_{i=0}^{n-2} (-1)^i \theta_i$. 

We will show first that the algebras  $A'$ and $A''$ are isomorphic.

Observe 
that the following equalities hold 
in the algebras $A'$ and $A''$
\begin{align*}
  a_2 \bar{a}_2
  a_2 a_3 \dots a_{n-2} \bar{a}_{n-2} \dots \bar{a}_2 
  & = 
  - a_2 a_3 \bar{a}_3  a_3 \dots a_{n-2} \bar{a}_{n-2} \dots \bar{a}_2 
 \\ & = 
  a_2 a_3 a_4 \bar{a}_4
  a_4 \dots a_{n-2} \bar{a}_{n-2} \dots \bar{a}_2 
  \\&
  = \dots  
  = (-1)^{n-3} a_2 \dots a_{n-1} \bar{a}_{n-1} a_{n-1} a_{n-2} \bar{a}_{n-2} \dots \bar{a}_2 
  \\&= (-1)^{n-4} a_2 \dots a_{n-1} a_{n-2} \bar{a}_{n-2} a_{n-2} \bar{a}_{n-2} \dots \bar{a}_2 
  = 
  0 
   .
\end{align*}
Then, applying the relation 
$(\bar{a}_0 a_0 + \bar{a}_1 a_1 + a_2 \bar{a}_2) a_2 = 0$,
we obtain the equality
\[
 (\bar{a}_0 a_0 + \bar{a}_1 a_1) a_2 \dots a_{n-2} \bar{a}_{n-2} \dots \bar{a}_2
 = 0 
 .
\]
Similarly, we prove the dual equality
\[
  a_2 \dots a_{n-2} \bar{a}_{n-2} \dots \bar{a}_2 (\bar{a}_0 a_0 + \bar{a}_1 a_1)
 = 0
  .
\]

Let $\varphi : A'' \to A'$ be the homomorphism of algebras 
defined on the arrows as follows
\begin{gather*}
  \varphi(a_0) = a_0 
   + \theta_0 
         a_0 a_2 \dots a_{n-2} \bar{a}_{n-2} \dots \bar{a}_2
  ,\\
  \varphi(a_1) = a_1 
         + \theta_1 
         a_1 a_2 \dots a_{n-2} \bar{a}_{n-2} \dots \bar{a}_2
  ,\\
  \varphi(a_k) = a_k + \left( \sum_{i=k+1}^{n-1} (-1)^{i+k+1} \theta_i \right) 
         \bar{a}_{k-1} \dots \bar{a}_2 \bar{a}_1 a_1 a_2 \dots a_{n-2} \bar{a}_{n-2} \dots \bar{a}_{k+1}
  ,\\
  \varphi(\bar{a}_l) = \bar{a}_l 
  , \mbox{ for }  k \in \{ 2,\dots,n-2 \}, l \in \{ 0,\dots,n-1 \} 
  .
\end{gather*}
We show that $\varphi$ is well defined.
Indeed, we have the equalities
\begin{align*}
    \varphi(a_0 \bar{a}_0) 
    &= \varphi(a_0) \varphi(\bar{a}_0) 
     = a_0 \bar{a}_0 
       + \theta_0 
         a_0 a_2 \dots a_{n-2} \bar{a}_{n-2} \dots \bar{a}_2 \bar{a}_0 
     = 0, 
    \\
    \varphi(a_1 \bar{a}_1) 
    &= \varphi(a_1) \varphi(\bar{a}_1) 
      = a_1 \bar{a}_1 
       + \theta_1 
         a_1 a_2 \dots a_{n-2} \bar{a}_{n-2} \dots \bar{a}_2 \bar{a}_1
     = 0,
    \\
    \varphi(\bar{a}_{0} a_{0} + \bar{a}_{1} a_{1} + a_{2} \bar{a}_{2}
      &+
  \theta 
  \bar{a}_1 a_1 a_2 \dots a_{n-2} \bar{a}_{n-2} \dots \bar{a}_3 \bar{a}_2
    ) 
    \\&=
    \varphi(\bar{a}_{0}) \varphi(a_{0}) + 
    \varphi(\bar{a}_{1}) \varphi(a_{1}) + \varphi(a_{2}) \varphi(\bar{a}_{2})
     \\ & \ \ 
     + \theta  
      \varphi(\bar{a}_1) \varphi(a_1) \varphi(a_2) \dots \varphi(a_{n-2}) \varphi(\bar{a}_{n-2}) \dots \varphi(\bar{a}_3) \varphi(\bar{a}_2)
    \\ &= 
    \bar{a}_{0} a_{0} + \bar{a}_{1} a_{1} + a_{2} \bar{a}_{2}
    \\ & \ \ \ 
     + \theta_0
         \bar{a}_0 a_0 a_2 \dots a_{n-2} \bar{a}_{n-2} \dots \bar{a}_2
    \\ & \ \ \ 
     + \left( \theta + \theta_1 + \sum_{i=3}^{n-1} (-1)^{i+3} \theta_i \right) 
         \bar{a}_1 a_1 a_2 \dots a_{n-2} \bar{a}_{n-2} \dots \bar{a}_2
    \\ &= 
    \bar{a}_{0} a_{0} + \bar{a}_{1} a_{1} + a_{2} \bar{a}_{2}
     + \theta_2
         \bar{a}_1 a_1 a_2 \dots a_{n-2} \bar{a}_{n-2} \dots \bar{a}_2
    \\ &= 0 ,
    \\
    \varphi(\bar{a}_{k-1} a_{k-1} + a_{k} \bar{a}_{k}) &= 
    \varphi(\bar{a}_{k-1}) \varphi(a_{k-1}) + \varphi(a_{k}) \varphi(\bar{a}_{k})
    \\ &= 
    \bar{a}_{k-1} a_{k-1} + a_{k} \bar{a}_{k}
     \\ & \ \  
         + \left( \sum_{i=k}^{n-1} (-1)^{i+k} \theta_i + \sum_{i=k+1}^{n-1} (-1)^{i+k+1} \theta_i \right) 
     \\ & \ \qquad  
         \bar{a}_{k-1} \dots \bar{a}_2 \bar{a}_1 a_1 a_2 \dots a_{n-2} \bar{a}_{n-2} \dots \bar{a}_{k}
    \\ &= 
    \bar{a}_{k-1} a_{k-1} + a_{k} \bar{a}_{k}
         + \theta_k 
         \bar{a}_{k-1} \dots \bar{a}_2 \bar{a}_1 a_1 a_2 \dots a_{n-2} \bar{a}_{n-2} \dots \bar{a}_{k}
    \\ &= 0 
         ,
    \\
    \varphi(\bar{a}_{n-2} a_{n-2}) &= 
    \varphi(\bar{a}_{n-2}) \varphi(a_{n-2}))
    \\ &= 
    \bar{a}_{n-2} a_{n-2}
         + \theta_{n-1} 
         \bar{a}_{n-2} \dots \bar{a}_2 \bar{a}_1 a_1 a_2 \dots a_{n-2}
       = 0 
         ,
\end{align*}
so $\varphi$ is well defined.
Similarly, we define the homomorphism of algebras
$\psi : A' \to A''$ 
given on the arrows by
\begin{gather*}
  \psi(a_0) = a_0 
   - \theta_0 
         a_0 a_2 \dots a_{n-2} \bar{a}_{n-2} \dots \bar{a}_2
  ,\\
  \psi(a_1) = a_1 
         - \theta_1 
         a_1 a_2 \dots a_{n-2} \bar{a}_{n-2} \dots \bar{a}_2
  ,
\end{gather*}\begin{gather*}
  \psi(a_k) = a_k - \left( \sum_{i=k+1}^{n-1} (-1)^{i+k+1} \theta_i \right) 
         \bar{a}_{k-1} \dots \bar{a}_2 \bar{a}_1 a_1 a_2 \dots a_{n-2} \bar{a}_{n-2} \dots \bar{a}_{k+1}
  ,
  \\
  \psi(\bar{a}_l) = \bar{a}_l 
  , \mbox{ and }  k \in \{ 2,\dots,n-2 \}, l \in \{ 0,\dots,n-1 \} ,
\end{gather*}
and prove that it is well defined.
The compositions $\psi \varphi$ and $\varphi \psi$ 
are the identities on $A''$ and $A'$, respectively,
so $\psi$ and $\varphi$ are isomorphisms, 
and hence the algebras $A'$ and $A''$ are isomorphic.
Therefore, the algebras $A$ and $A''$ are also isomorphic.

Therefore, it suffices to prove that 
$A''$ is isomorphic to 
$P(\mathbb{D}_n)$ or
$P^{*}(\mathbb{D}_n)$.

If $\theta = 0$, then $A''$ is isomorphic, by definition, 
to the preprojective algebra $P(\mathbb{D}_n)$.
So assume that $\theta \neq 0$. 
Then there exists
an element $\lambda \in K \setminus \{0 \}$ such that
$\lambda^{2n-3} = \theta$.
It can be easily seen that 
the homomorphism
$\phi : P^{*}(\mathbb{D}_n) \to A''$ 
given on the arrows by 
\begin{gather*}
  \phi(a_k) = \lambda a_k
  , \, 
  \psi(\bar{a}_k) = \lambda \bar{a}_k 
  , \mbox{ for }  k \in \{ 0,\dots,n-1 \} ,
\end{gather*}
is an algebra isomorphism, and hence 
the algebras 
$A''$ and $P^{*}(\mathbb{D}_n)$ are isomorphic.
\end{proof}

\begin{lemma}
\label{lem:eqLn}
Let $n \geq 2$ be a natural number and let
$A$ be a self-injective algebra socle equivalent 
to the preprojective algebra $P(\mathbb{L}_n)$.
Then $A$ is isomorphic to one of the algebras 
$P(\mathbb{L}_n)$ or $P^{*}(\mathbb{L}_n)$.
\end{lemma}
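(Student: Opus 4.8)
The plan is to proceed exactly as in the preceding lemmas of this section. First I would note that $A$ is isomorphic to a bound quiver algebra $A'$ of the quiver $Q_{\mathbb{L}_n}$, bound by relations obtained from the defining relations of $P(\mathbb{L}_n)$ by correcting each of them with an element of $\soc(A')$. Since $A'$ is basic and self-injective, $\soc(A')$ is spanned by the longest non-zero paths, and the only such path lying in a fixed subspace $e_kA'e_k$ is a path $p_k$ from $k$ to $k$; moreover the relation at vertex $0$ forces $a_0\bar a_0\equiv-\varepsilon^2$ modulo higher terms, so that $e_0A'e_0\cong K[\varepsilon]/(\varepsilon^{2n})$ and $p_0=\varepsilon^{2n-1}$, while for $1\leq k\leq n-1$ the path $p_k$ is a scalar multiple of $\bar a_{k-1}\cdots\bar a_0\,\varepsilon^{2n-1-2k}\,a_0\cdots a_{k-1}$. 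Thus $A'$ is bound by the relations
\begin{gather*}
 \varepsilon^2+a_0\bar a_0+\theta_0\,\varepsilon^{2n-1}=0,\qquad \varepsilon^{2n}=0,\\
 \bar a_{k-1}a_{k-1}+a_k\bar a_k+\theta_k\,p_k=0\quad(1\leq k\leq n-2),\qquad \bar a_{n-2}a_{n-2}+\theta_{n-1}\,p_{n-1}=0,
\end{gather*}
for some scalars $\theta_0,\dots,\theta_{n-1}\in K$.

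Next, following verbatim the structure of the even $\mathbb{D}_n$ case treated above, I would construct mutually inverse algebra homomorphisms $\varphi\colon A''\to A'$ and $\psi\colon A'\to A''$ that fix every $\bar a_l$ and modify each $a_k$ (and, where necessary, $\varepsilon$) by a suitable scalar multiple of the appropriate initial segment of $p_k$, in such a way that all the corrections $\theta_1,\dots,\theta_{n-1}$ at the vertices $\geq 1$ are transported into a single correction at the exceptional vertex $0$. Here $A''$ denotes the bound quiver algebra of $Q_{\mathbb{L}_n}$ with relations $\varepsilon^2+a_0\bar a_0+\theta\,\varepsilon^{2n-1}=0$, $\varepsilon^{2n}=0$, and the ordinary (undeformed) preprojective relations at the remaining vertices, with $\theta=\sum_{i=0}^{n-1}(\pm 1)\,\theta_i$. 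In other words $A''=P^{f}(\mathbb{L}_n)$, where $f$ is the coset of $\theta x^{2n-2}$ in $R(\mathbb{L}_n)=K[x]/(x^{2n})$, which is admissible since $2n-2\geq 2$. Verifying that $\varphi$ and $\psi$ respect the defining relations and are mutually inverse is the routine but lengthy bookkeeping of the argument; it rests on vanishing identities for ``paths that turn back too early'', derived from $a_0\bar a_0\equiv-\varepsilon^2$ and $\bar a_{n-2}a_{n-2}\equiv 0$, analogous to identities such as $a_0(a_2\bar a_2)^{2k}\bar a_0=0$ used for $\mathbb{D}_n$.

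Finally I would conclude. If $\theta=0$, then $A''=P(\mathbb{L}_n)$ by definition. If $\theta\neq 0$, then, since $K$ is algebraically closed, we may pick $\lambda\in K\setminus\{0\}$ with $\lambda^{2n-3}=\theta$, and the assignment $a_i\mapsto\lambda a_i$, $\bar a_i\mapsto\lambda\bar a_i$, $\varepsilon\mapsto\lambda\varepsilon$ defines an algebra isomorphism $P^{*}(\mathbb{L}_n)\to A''$: each defining relation of $P^{*}(\mathbb{L}_n)$ is the sum of a part of length $2$ and a correction of length $2n-1$, which under the rescaling acquire the factors $\lambda^2$ and $\lambda^{2n-1}$ respectively, and $\lambda^{2n-3}=\theta$. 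Hence $A\cong A'\cong A''$ is isomorphic to $P(\mathbb{L}_n)$ or to $P^{*}(\mathbb{L}_n)$, as asserted.

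The step I expect to be the main obstacle is the first one: showing that socle equivalence pins down the relations of $A'$ in the displayed form, and in particular that the correction at the exceptional vertex $0$ is necessarily a multiple of $\varepsilon^{2n-1}=\varepsilon f(\varepsilon)$ with $f\in\rad^2 R(\mathbb{L}_n)$. The construction and the well-definedness check for $\varphi$ and $\psi$ is computationally the heaviest part, but it runs entirely parallel to the even $\mathbb{D}_n$ lemma already proved in this section.
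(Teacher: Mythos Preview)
Your proposal is correct and follows essentially the same approach as the paper: write $A\cong A'$ with socle corrections $\theta_k$ at each vertex, construct mutually inverse homomorphisms between $A'$ and a one-parameter algebra $A''=P^{\theta x^{2n-2}}(\mathbb{L}_n)$ by modifying only the arrows $a_k$ (the paper in fact leaves $\varepsilon$ fixed), and then rescale by $\lambda$ with $\lambda^{2n-3}=\theta$. The paper's choice of socle path at vertex $k$ is $\bar a_{k-1}\cdots\bar a_0\,\varepsilon\,a_0\cdots a_{n-2}\bar a_{n-2}\cdots\bar a_k$ rather than your $\bar a_{k-1}\cdots\bar a_0\,\varepsilon^{2n-1-2k}\,a_0\cdots a_{k-1}$, but these agree up to sign, and the paper likewise just asserts the form of $A'$ without further justification.
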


\begin{proof}
The algebra $A$ is isomorphic to a bound quiver algebra $A'$ 
of the quiver $Q_{\mathbb{L}_n}$
\[
    \xymatrix@C=1pc{
        0
        \ar `ld_u[] `_rd[]^{\varepsilon = \bar{\varepsilon}} []
        \ar@<.5ex>^{a_0}[rr] && 1 \ar@<.5ex>^(.45){\bar{a}_0}[ll] \ar@<.5ex>^{a_1}[rr] && 
        2 \ar@<.5ex>^(.45){\bar{a}_1}[ll]
        \ar@{-}@<.5ex>[r] & \ar@<.5ex>[l] \dots \ar@<.5ex>[r] & \ar@{-}@<.5ex>[l]
        n-2 \ar@<.5ex>^{a_{n-2}}[rr] && n - 1 \ar@<.5ex>^(.45){\bar{a}_{n-2}}[ll] \\
    } 
\]
bound by the relations:
\begin{gather*}
    \varepsilon^2 + a_0 \bar{a}_0 + \theta_0 \varepsilon^{2n-1} = 0
 ,\ \ 
    \varepsilon^{2n} = 0
 ,\\
  \bar{a}_{k-1} a_{k-1} +  a_{k} \bar{a}_{k}
  +
  \theta_k \bar{a}_{k-1} \dots \bar{a}_1 \bar{a}_0 \varepsilon a_0 
            a_1 \dots a_{n-2} \bar{a}_{n-2} \dots \bar{a}_k
   = 0
 ,\\
  a_{k-1} (\bar{a}_{k-1} a_{k-1} + a_k \bar{a}_k) = 0
  , \ \ 
  (\bar{a}_{k-1} a_{k-1} + a_k \bar{a}_k) \bar{a}_{k-1} = 0
 ,\\
  \bar{a}_k (\bar{a}_{k-1} a_{k-1} + a_k \bar{a}_k) = 0
  , \ \ 
  (\bar{a}_{k-1} a_{k-1} + a_k \bar{a}_k) a_k = 0
  , \mbox{ for } k \in \{ 1, \dots, n-2 \}
 ,\\
  \bar{a}_{n-2} a_{n-2}
  +
  \theta_{n-2} \bar{a}_{n-2} \dots \bar{a}_1 \bar{a}_0 \varepsilon a_0 
            a_1 \dots a_{n-2} 
   = 0
 ,\\
  \bar{a}_{n-2} a_{n-2} \bar{a}_{n-2} = 0
  , \ \ 
  a_{n-2} \bar{a}_{n-2} a_{n-2} = 0
  ,
\end{gather*}
for some coefficients $\theta_0, \dots, \theta_{n-1} \in K$.
Moreover, let $A''$ be the bound quiver algebra 
of the quiver $Q_{\mathbb{L}_n}$
bound by the relations:
\begin{gather*}
    \varepsilon^2 + a_0 \bar{a}_0 + \theta \varepsilon^{2n-1} = 0,
 \ \ 
    \varepsilon^{2n} = 0,
 \ \ 
    \bar{a}_{n-2} a_{n-2} = 0, 
 \\
    \bar{a}_i a_i + a_{i+1} \bar{a}_{i+1} = 0
    \mbox{ for } i \in \{0,\dots, n-3\} .
\end{gather*}
with $\theta = \theta_0 - \sum_{i=1}^{n-1} (-1)^{i+\frac{(n-2)(n-1)}{2}} \theta_i$.

Observe first that, in the algebras $A'$ and $A''$,
we have the equalities
\begin{align*}
  \varepsilon^{2n-1} 
     &= (-1)^{n-1} \varepsilon (a_0 \bar{a}_0)^{n-1}
      = (-1)^{(n-1)+(n-2)} \varepsilon a_0 (a_1 \bar{a}_1)^{n-2} \bar{a}_0 
   \\&= \dots 
      = (-1)^{\frac{(n-2)(n-1)}{2}} \varepsilon a_0 a_1 \dots a_{n-2} \bar{a}_{n-2} \dots \bar{a}_1 \bar{a}_0 
   ,
\end{align*}
and hence the equality
\begin{align*}
       \varepsilon a_0 a_1 \dots a_{n-2} \bar{a}_{n-2} \dots \bar{a}_1 \bar{a}_0 
    &= (-1)^{\frac{(n-2)(n-1)}{2}} 
       \varepsilon^{2n-1}
   .
\end{align*}
Similarly, we obtain the equality
\begin{align*}
       a_0 a_1 \dots a_{n-2} \bar{a}_{n-2} \dots \bar{a}_1 \bar{a}_0 \varepsilon 
    &= (-1)^{\frac{(n-2)(n-1)}{2}} 
       \varepsilon^{2n-1}
   .
\end{align*}

We will show that the algebras  $A'$ and $A''$ are isomorphic.
Let $\varphi : A'' \to A'$ be the homomorphism of algebras 
determined on the arrows by
\begin{gather*}
  \varphi(a_k) = a_k + \left( \sum_{i=k+1}^{n-1} (-1)^{i+k+1} \theta_i \right) 
         \bar{a}_{k-1} \dots \bar{a}_1 \bar{a}_0 \varepsilon a_0 a_1 \dots a_{n-2} \bar{a}_{n-2} \dots \bar{a}_{k+1}
  ,\\
  \varphi(\bar{a}_k) = \bar{a}_k 
  , \mbox{ for }  k \in \{ 0,\dots,n-2 \}
  , \ \ 
\quad
  \varphi(\varepsilon) = \varepsilon 
  .
\end{gather*}
We claim that $\varphi$ is well defined.
Indeed, we have the equalities
\begin{align*}
    \varphi(\varepsilon^2 + a_0 \bar{a}_0 
   + \theta \varepsilon^{2n-1}) 
    &= \varphi(\varepsilon)^2 + \varphi(a_0) \varphi(\bar{a}_0) + \theta \varphi(\varepsilon)^{2n-1}
  \\&= \varepsilon^2 + a_0 \bar{a}_0 
       + \theta \varepsilon^{2n-1}
    \\ & \ \ \ 
         + \left( \sum_{i=1}^{n-1} (-1)^{i+1} \theta_i \right) 
           \varepsilon a_0 \dots a_{n-2} \bar{a}_{n-2} \dots \bar{a}_1 \bar{a}_0 
  \\&= \varepsilon^2 + a_0 \bar{a}_0 
         +  \left(\theta + (-1)^{\frac{(n-2)(n-1)}{2}} \sum_{i=1}^{n-1} (-1)^{i} \theta_i \right) 
         \varepsilon^{2n-1}
  \\&= \varepsilon^2 + a_0 \bar{a}_0 
         + \theta_0
         \varepsilon^{2n-1}
     = 0 ,
\\
    \varphi(\bar{a}_{k-1} a_{k-1} + a_{k} \bar{a}_{k}) &= 
    \varphi(\bar{a}_{k-1}) \varphi(a_{k-1}) + \varphi(a_{k}) \varphi(\bar{a}_{k})
    \\ &= 
    \bar{a}_{k-1} a_{k-1} + a_{k} \bar{a}_{k}
     \\ & \ \  
         + \left( \sum_{i=k}^{n-1} (-1)^{i+k} \theta_i + \sum_{i=k+1}^{n-1} (-1)^{i+k+1} \theta_i \right) 
     \\ & \ \qquad  
         \bar{a}_{k-1} \dots \bar{a}_0 \varepsilon a_0 \dots a_{n-2} \bar{a}_{n-2} \dots \bar{a}_{k}
    \\ &= 
    \bar{a}_{k-1} a_{k-1} + a_{k} \bar{a}_{k}
         + \theta_k 
         \bar{a}_{k-1} \dots \bar{a}_0 \varepsilon a_0 \dots a_{n-2} \bar{a}_{n-2} \dots \bar{a}_{k}
    \\ &= 0 
         ,
    \\
    \varphi(\bar{a}_{n-2} a_{n-2}) &= 
    \varphi(\bar{a}_{n-2}) \varphi(a_{n-2})
    \\ &= 
    \bar{a}_{n-2} a_{n-2}
         + \theta_{n-1} 
         \bar{a}_{n-2} \dots \bar{a}_0 \varepsilon a_0 \dots a_{n-2}
        = 0 
         ,
\end{align*}
so $\varphi$ is well defined.
Similarly, we define the homomorphism of algebras
$\psi : A' \to A''$ 
given on the arrows by
\begin{gather*}
  \psi(a_k) = a_k - \left( \sum_{i=k+1}^{n-1} (-1)^{i+k+1} \theta_i \right) 
         \bar{a}_{k-1} \dots \bar{a}_1 \bar{a}_0 \varepsilon a_0 a_1 \dots a_{n-2} \bar{a}_{n-2} \dots \bar{a}_{k+1}
  ,\\
  \psi(\bar{a}_k) = \bar{a}_k 
  , \mbox{ for }  k \in \{ 0,\dots,n-2 \} 
  , \ \ 
  \psi(\varepsilon) = \varepsilon 
  ,
\end{gather*}
and show that it is well defined.
Observe that the compositions $\psi \varphi$ and $\varphi \psi$ 
are the identities on $A''$ and $A'$, respectively.
Hence $\psi$ and $\varphi$ are isomorphisms, 
and so the algebras $A''$ and $A'$ are isomorphic.
But then the algebras 
$A''$ and $A$ are also isomorphic.

Therefore, it suffices to prove that the algebra
$A''$ is isomorphic to the algebra 
$P(\mathbb{L}_n)$, 
or to the algebra $P^{*}(\mathbb{L}_n)$.
If $\theta = 0$, then $A''$ 
is 
the preprojective algebra $P(\mathbb{L}_n)$, 
by definition.
So assume that $\theta \neq 0$. 
Then there exists $\lambda \in K$ such that
$\lambda^{2n-3} = \theta$.
Then it can be easily seen that 
the homomorphism
$\phi : P^{*}(\mathbb{L}_n) \to A''$ 
given on the arrows by 
\begin{gather*}
  \phi(\varepsilon) = \lambda \varepsilon
  , \, 
  \phi(a_k) = \lambda a_k
  , \, 
  \psi(\bar{a}_k) = \lambda \bar{a}_k 
  , \mbox{ for }  k \in \{ 0,\dots,n-1 \} ,
\end{gather*}
is an algebra isomorphism. 
Converselly, the algebras 
$A''$ and $P^{*}(\mathbb{L}_n)$ are isomorphic.
This ends the proof.
\end{proof}

\begin{lemma}
\label{lem:E7-1}
Let $A$ be a self-injective algebra socle equivalent 
to the preprojective algebra $P(\mathbb{E}_7)$.
Then $A$ is isomorphic to one of the algebras 
$P(\mathbb{E}_7)$  or  $P^{*}(\mathbb{E}_7)$.
\end{lemma}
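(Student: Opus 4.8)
The plan is to follow the scheme already used above for $\mathbb{A}_n$, $\mathbb{E}_6$ and $\mathbb{D}_n$. First I would present $A$ concretely. Since $A$ is self-injective with $A/\soc(A)\cong P(\mathbb{E}_7)/\soc(P(\mathbb{E}_7))$, and the socle of a self-injective bound quiver algebra is spanned by the cosets of the maximal nonzero paths, $A$ is isomorphic to a bound quiver algebra $A'$ on the quiver $Q_{\mathbb{E}_7}$ whose defining relations are those of $P(\mathbb{E}_7)$, each modified by adding a $K$-linear combination, with scalar parameters $\theta_j\in K$, of maximal nonzero (socle) paths: relations of the shape $a_0\bar a_0=\theta_0(\cdots)$, $\bar a_0 a_0+\bar a_2 a_2+a_3\bar a_3=\theta_3(\cdots)$ and the analogous corrections along the arm $3$--$4$--$5$--$6$, while the zero-relations ($a_1\bar a_1=0$, $a_2(\bar a_0 a_0+\bar a_2 a_2+a_3\bar a_3)=0$, etc.) and the admissibility-type relation $(\bar a_0 a_0+\bar a_2 a_2)^{4}=0$ persist unchanged (up to a harmless socle correction).

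Next I would record the auxiliary identities needed to manipulate these relations, in the spirit of equations~(1)--(4) of the $\mathbb{E}_6$ proof: that $(\bar a_0 a_0)^2=0$, that suitable powers of $\bar a_2 a_2$ and of $\bar a_0 a_0+\bar a_2 a_2+a_3\bar a_3$ vanish, and that almost every product of a socle path by a further arrow is zero (using the zero-relations together with $a_1\bar a_1=0$). With these in hand, I would introduce the intermediate algebra $A''$ obtained from $A'$ by collapsing all parameters $\theta_j$ into a single scalar $\theta$, a fixed alternating sum of the $\theta_j$, carried by one distinguished relation (at the exceptional vertex $3$) with all other socle corrections set to zero. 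Then, exactly as in the $\mathbb{D}_n$ and $\mathbb{L}_n$ lemmas, I would construct an explicit algebra homomorphism $\varphi : A''\to A'$ and its inverse $\psi : A'\to A''$, given on arrows by adding to each $a_k$ a socle-path correction whose coefficient is a partial alternating sum of the $\theta_j$; checking that $\varphi$ and $\psi$ respect all defining relations is a direct computation with the auxiliary identities, and $\psi\varphi$, $\varphi\psi$ are the identities since each correction term is annihilated on the appropriate side by a further arrow.

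It then remains to identify $A''$. If $\theta=0$ then $A''=P(\mathbb{E}_7)$ by definition. If $\theta\neq 0$, pick $\lambda\in K\setminus\{0\}$ with $\lambda^{d}=\theta$ for the appropriate exponent $d$, and check that the rescaling $\phi : P^{*}(\mathbb{E}_7)\to A''$, $\phi(a_k)=\lambda a_k$, $\phi(\bar a_k)=\lambda\bar a_k$, is an algebra isomorphism, where $P^{*}(\mathbb{E}_7)$ is the deformed preprojective algebra $P^{f}(\mathbb{E}_7)$ with $f$ the coset of $(xy)^{4}$ in $R(\mathbb{E}_7)$, as in Section~\ref{sec:1}. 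Hence $A$ is isomorphic to $P(\mathbb{E}_7)$ or to $P^{*}(\mathbb{E}_7)$.

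The main obstacle is the middle step. Unlike the linear quivers $\mathbb{A}_n$, $\mathbb{L}_n$ and even $\mathbb{D}_n$, the $\mathbb{E}_7$ quiver branches at the exceptional vertex $3$, so the socle paths through the various vertices interact, and one must track carefully which of them can occur in each relation and how $\bar a_0 a_0$, $\bar a_2 a_2$, $a_3\bar a_3$ commute past one another modulo the ideal; getting the bookkeeping of the $\theta_j$ right, so that everything genuinely collapses to the single scalar $\theta$ and the normalising isomorphism is well defined, is where the real work lies. Once the correct auxiliary identities are established, well-definedness of $\varphi$, $\psi$ and of the final rescaling $\phi$ is routine.
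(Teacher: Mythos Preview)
Your proposal is correct and follows essentially the same approach as the paper: present $A$ as $A'$ with socle parameters $\theta_j$, reduce to a one-parameter algebra $A''$ via explicit mutually inverse homomorphisms built from partial alternating sums, then rescale (with $\lambda^{13}=\theta$) to land on $P(\mathbb{E}_7)$ or $P^{*}(\mathbb{E}_7)$. One small imprecision: in the paper's presentation the relations $a_1\bar a_1=0$ and $\bar a_1 a_1+a_2\bar a_2=0$ \emph{do} carry socle corrections $\theta_1,\theta_2$ (since $P(\mathbb{E}_7)$ is weakly symmetric, every vertex has a socle element), and the key auxiliary identity you need is $y^2xyxz^3=-y^2xyxy^2x=yxy^2xy^2x$ with $x=\bar a_0a_0$, $y=\bar a_2a_2$, $z=a_3\bar a_3$, derived from $x^2=y^3=z^4=0$.
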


\begin{proof}
    The algebra $A$ is isomorphic to a bound quiver 
    algebra $A'$ of the quiver $Q_{\mathbb{E}_7}$
    \[
    \xymatrix{
        &&0 \ar@<.5ex>^{a_0}[d] \\
        1 \ar@<.5ex>^{a_1}[r] & 2 \ar@<.5ex>^{\bar{a}_1}[l] \ar@<.5ex>^{a_2}[r] &
        3 \ar@<.5ex>^{\bar{a}_2}[l] \ar@<.5ex>^{a_3}[r] \ar@<.5ex>^{\bar{a}_0}[u] &
        4 \ar@<.5ex>^{\bar{a}_3}[l] \ar@<.5ex>^{a_4}[r] &
        5 \ar@<.5ex>^(.5){\bar{a}_4}[l] \ar@<.5ex>^{a_5}[r] &
        6 \ar@<.5ex>^(.5){\bar{a}_5}[l] \\
    }
    \]
    bound by the relations:
    \begin{gather*}
        a_0 \bar{a}_0 + \theta_0 
        a_0 (\bar{a}_2 a_2)^2 \bar{a}_0 a_0 \bar{a}_2 a_2 \bar{a}_0 a_0 (\bar{a}_2 a_2)^2 \bar{a}_0
        = 0 
        , \ \ 
        a_0 \bar{a}_0 a_0 = 0
        , \ \ 
        \bar{a}_0 a_0 \bar{a}_0 = 0
        ,\\ 
        a_1 \bar{a}_1 + \theta_1 
        a_1 a_2 \bar{a}_0 a_0 \bar{a}_2 a_2 \bar{a}_0 a_0 (\bar{a}_2 a_2)^2 \bar{a}_0 a_0 \bar{a}_2 \bar{a}_1
        = 0 
        , \ \ 
        a_1 \bar{a}_1 a_1 = 0
        , \ \ 
        \bar{a}_1 a_1 \bar{a}_1 = 0
         ,
        \\
        \bar{a}_0 a_0 + \bar{a}_2 a_2 + a_3 \bar{a}_3 + 
        \theta_3 
        (\bar{a}_2 a_2)^2 \bar{a}_0 a_0 \bar{a}_2 a_2 \bar{a}_0 a_0 (\bar{a}_2 a_2)^2 \bar{a}_0 a_0
        = 0
        ,
        \\
        a_0 (\bar{a}_2 a_2 + a_3 \bar{a}_3) = 0
        , \ \ 
        (\bar{a}_2 a_2 + a_3 \bar{a}_3) \bar{a}_0 = 0
        ,\\
        a_2 (\bar{a}_0 a_0 + \bar{a}_2 a_2 + a_3 \bar{a}_3) = 0
        , \ \ 
        (\bar{a}_0 a_0 + \bar{a}_2 a_2 + a_3 \bar{a}_3) \bar{a}_2 = 0
        ,
        \\
        \bar{a}_3 (\bar{a}_0 a_0 + \bar{a}_2 a_2 + a_3 \bar{a}_3) = 0
        , \ \ 
        (\bar{a}_0 a_0 + \bar{a}_2 a_2 + a_3 \bar{a}_3) a_3 = 0
        ,\\
        \bar{a}_{1} a_{1} +  a_{2} \bar{a}_{2}
        +
        \theta_2 
        \bar{a}_1 a_1 a_2 \bar{a}_0 a_0 \bar{a}_2 a_2 \bar{a}_0 a_0 (\bar{a}_2 a_2)^2 \bar{a}_0 a_0 \bar{a}_2
        = 0
        ,
    \end{gather*}\begin{gather*}
        \bar{a}_{3} a_{3} +  a_{4} \bar{a}_{4}
        +
        \theta_4 
        \bar{a}_{3} (\bar{a}_2 a_2)^2 \bar{a}_0 a_0 \bar{a}_2 a_2 \bar{a}_0 a_0 a_{3} a_{4} a_{5} \bar{a}_{5} \bar{a}_{4}
        = 0
        ,\\
        \bar{a}_{4} a_{4} +  a_{5} \bar{a}_{5}
        +
        \theta_5 
        \bar{a}_{4} \bar{a}_{3} (\bar{a}_2 a_2)^2 \bar{a}_0 a_0 \bar{a}_2 a_2 \bar{a}_0 a_0 a_{3} a_{4} a_{5} \bar{a}_{5}
        = 0
        ,\\
        a_{k-1} (\bar{a}_{k-1} a_{k-1} + a_k \bar{a}_k) = 0
        , \ \ 
        (\bar{a}_{k-1} a_{k-1} + a_k \bar{a}_k) \bar{a}_{k-1} = 0
         ,\\
        \bar{a}_k (\bar{a}_{k-1} a_{k-1} + a_k \bar{a}_k) = 0
        , \ \ 
        (\bar{a}_{k-1} a_{k-1} + a_k \bar{a}_k) a_k = 0
        , \mbox{ for } k \in \{ 2,4,5 \}
        ,\\
        \bar{a}_{5} a_{5}
        +
        \theta_{6} 
        \bar{a}_{5} \bar{a}_{4} \bar{a}_{3} (\bar{a}_2 a_2)^2 \bar{a}_0 a_0 \bar{a}_2 a_2 \bar{a}_0 a_0 a_{3} a_{4} a_{5}
        = 0
        , \ \ 
        \bar{a}_{5} a_{5} \bar{a}_{5} = 0
        , \ \ 
        a_{5} \bar{a}_{5} a_{5} = 0
        ,
    \end{gather*}
    for some coefficients 
    $\theta_0, \dots, \theta_{6} \in K$.
    Moreover, let $A''$ be the bound quiver algebra 
    of the quiver $Q_{\mathbb{E}_7}$
    bound by the relations:
    \begin{gather*}
        a_0 \bar{a}_0 = 0
        , \ \ 
        a_1 \bar{a}_1 = 0
        , \ \ 
        \bar{a}_{k-1} a_{k-1} + a_k \bar{a}_k = 0
        , \mbox{ for } k \in \{ 2,4,5 \}
        , \ \ 
        \bar{a}_5 a_5 = 0
        ,\\
        \bar{a}_0 a_0 + \bar{a}_2 a_2 + a_3 \bar{a}_3 + 
        \theta 
        \bar{a}_2 a_2 \bar{a}_0 a_0 (\bar{a}_2 a_2)^2 \bar{a}_0 a_0 (\bar{a}_2 a_2)^2 \bar{a}_0 a_0
        = 0
        ,\\
        a_0 (\bar{a}_2 a_2 + a_3 \bar{a}_3) = 0
        , \ \ 
        (\bar{a}_2 a_2 + a_3 \bar{a}_3) \bar{a}_0 = 0
        ,\\
        a_2 (\bar{a}_0 a_0 + \bar{a}_2 a_2 + a_3 \bar{a}_3) = 0
        , \ \ 
        (\bar{a}_0 a_0 + \bar{a}_2 a_2 + a_3 \bar{a}_3) \bar{a}_2 = 0
        ,\\
        \bar{a}_3 (\bar{a}_0 a_0 + \bar{a}_2 a_2 + a_3 \bar{a}_3) = 0
        , \ \ 
        (\bar{a}_0 a_0 + \bar{a}_2 a_2 + a_3 \bar{a}_3) a_3 = 0
        ,
    \end{gather*}
    with 
    $\theta = \theta_0 + \theta_1 - \theta_2 - \theta_3 + \theta_4 - \theta_5 + \theta_6$. 
    
    To simplify the notation we abbreviate
    $x = \bar{a}_0 a_0$, $y = \bar{a}_2 a_2$ and $z = a_3 \bar{a}_3$.
    Then, in the both algebras $A'$ and $A''$, we have
    the equalities
    \begin{align*}
        x^2 &= (\bar{a}_0 a_0)^2 = \bar{a}_0 (a_0 \bar{a}_0) a_0 = 0 , \\
        y^3 &= (\bar{a}_2 a_2)^3 = \bar{a}_2 \bar{a}_1 (a_1 \bar{a}_1) a_1 a_2 = 0 , \\
        z^4 &= (a_3 \bar{a}_3)^4 
        = - a_3 a_4 a_5 (\bar{a}_5 a_5) \bar{a}_5 \bar{a}_4 \bar{a}_3 = 0 
        .
    \end{align*}
    Hence, we get
    \begin{align*}
        z^3 &= (-x -y)^3 = - \left( xyx + xy^2 + yxy + y^2x\right) , \\
        0 = z^4 &= (-x -y)^4 
        = (xy)^2 + xy^2x +(yx)^2 + yxy^2 + y^2xy
        .
    \end{align*}
    Then we derive the two equalities
    \begin{align}
        \tag{*}
        \label{eq:E7}
    y^2xyxz^3 
    = - y^2xyxy^2x
    = yxy^2xy^2x.
    \end{align}
    Indeed, we have
    \begin{align*}
        y^2xyxz^3 &= - y^2xyx\left( xyx + xy^2 + yxy + y^2x\right) 
        = - y^2xyxy^2x - y^2(xy)^2 
        \\&
        = - y^2xyxy^2x + y^2\left(xy^2x +(yx)^2 + yxy^2 + y^2xy\right)xy 
        = - y^2xyxy^2x
    \end{align*}
    and
    \begin{align*}
        y^2xyxy^2x &= - \left( (xy)^2 + xy^2x +(yx)^2 + yxy^2 \right) xy^2x
        = - (xy)^3yx - yxy^2xy^2x
        \\&
        = x\left((xy)^2 + xy^2x + yxy^2 + y^2xy\right)y^2x  - yxy^2xy^2x
        = - yxy^2xy^2x
        .
    \end{align*}
    
    We will show now that the algebras $A'$ and $A''$ are isomorphic.
    Let $\varphi : A'' \to A'$ be the homomorphishm of algebras
    defined on the arrows as follows:
    \begin{align*} 
        \varphi({a}_0) &= {a}_0 
        ,\\
        \varphi(a_1) &= a_1 
        + \theta_1 
        a_1 a_2 \bar{a}_0 a_0 \bar{a}_2 a_2 \bar{a}_0 a_0 (\bar{a}_2 a_2)^2 \bar{a}_0 a_0 \bar{a}_2
        ,\\
        \varphi(a_2) &= a_2 
        + (\theta_2 - \theta_1)
        \bar{a}_1 a_1 a_2 \bar{a}_0 a_0 \bar{a}_2 a_2 \bar{a}_0 a_0 (\bar{a}_2 a_2)^2 \bar{a}_0 a_0
        ,\\
        \varphi(a_3) &= a_3 
        + (\theta_4 - \theta_5 + \theta_6)
        (\bar{a}_2 a_2)^2 \bar{a}_0 a_0 \bar{a}_2 a_2 \bar{a}_0 a_0 a_{3} a_{4} a_{5} \bar{a}_{5} \bar{a}_{4}
        ,\\
        \varphi(a_4) &= a_4 
        + (\theta_5 - \theta_6)
        \bar{a}_{3} (\bar{a}_2 a_2)^2 \bar{a}_0 a_0 \bar{a}_2 a_2 \bar{a}_0 a_0 a_{3} a_{4} a_{5} \bar{a}_{5}
        ,\\
        \varphi(a_5) &= a_5 
        + \theta_6
        \bar{a}_{4} \bar{a}_{3} (\bar{a}_2 a_2)^2 \bar{a}_0 a_0 \bar{a}_2 a_2 \bar{a}_0 a_0 a_{3} a_{4} a_{5}
        ,\\
        \varphi(\bar{a}_0) &= \bar{a}_0 
        + \theta_0 
        (\bar{a}_2 a_2)^2 \bar{a}_0 a_0 \bar{a}_2 a_2 \bar{a}_0 a_0 (\bar{a}_2 a_2)^2 \bar{a}_0
        ,\\
        \varphi(\bar{a}_l) &= \bar{a}_l 
        , \mbox{ for } l \in \{ 1,\dots,5 \} 
        .
    \end{align*}
    We show that $\varphi$ is well defined.
    Indeed, we have the equalities
    \begin{align*}
        \varphi(a_0 \bar{a}_0) 
        &= \varphi(a_0) \varphi(\bar{a}_0) 
        \\&
        = a_0 \bar{a}_0 
        + \theta_0 
        a_0 (\bar{a}_2 a_2)^2 \bar{a}_0 a_0 \bar{a}_2 a_2 \bar{a}_0 a_0 (\bar{a}_2 a_2)^2 \bar{a}_0 
        = 0, 
       \\
        \varphi(a_1 \bar{a}_1) 
        &= \varphi(a_1) \varphi(\bar{a}_1) 
        \\ &
        = a_1 \bar{a}_1 
        + \theta_1 
        a_1 a_2 \bar{a}_0 a_0 \bar{a}_2 a_2 \bar{a}_0 a_0 (\bar{a}_2 a_2)^2 \bar{a}_0 a_0 \bar{a}_2 \bar{a}_1
        = 0,
       \\
        \varphi(\bar{a}_1 a_1 + a_2 \bar{a}_2) 
        &= \varphi(\bar{a}_1) \varphi(a_1) + \varphi(a_2) \varphi(\bar{a}_2) 
        \\ &
        = \bar{a}_1 a_1 + a_2 \bar{a}_2 
        + \theta_2 
        \bar{a}_1 a_1 a_2 \bar{a}_0 a_0 \bar{a}_2 a_2 \bar{a}_0 a_0 (\bar{a}_2 a_2)^2 \bar{a}_0 a_0 \bar{a}_2
        \\ &
        = 0,
       \\
        \varphi(\bar{a}_3 a_3 + a_4 \bar{a}_4) 
        &= \varphi(\bar{a}_3) \varphi(a_3) + \varphi(a_4) \varphi(\bar{a}_4) 
        \\ &
        = \bar{a}_3 a_3 + a_4 \bar{a}_4 
        + \theta_4 
        \bar{a}_{3} (\bar{a}_2 a_2)^2 \bar{a}_0 a_0 \bar{a}_2 a_2 \bar{a}_0 a_0 a_{3} a_{4} a_{5} \bar{a}_{5} \bar{a}_4
        \\ &
        = 0,
        \\
        \varphi(\bar{a}_4 a_4 + a_5 \bar{a}_5) 
        &= \varphi(\bar{a}_4) \varphi(a_4) + \varphi(a_5) \varphi(\bar{a}_5) 
        \\ &
        = \bar{a}_4 a_4 + a_5 \bar{a}_5 
        + \theta_5 
        \bar{a}_4 \bar{a}_{3} (\bar{a}_2 a_2)^2 \bar{a}_0 a_0 \bar{a}_2 a_2 \bar{a}_0 a_0 a_{3} a_{4} a_{5} \bar{a}_{5}
        \\ &
        = 0,
        \\
        \varphi(\bar{a}_{5} a_{5}) &= 
        \varphi(\bar{a}_{5}) \varphi(a_{5})
        \\ &= 
        \bar{a}_{5} a_{5}
        + \theta_{6} 
        \bar{a}_{5} \bar{a}_{4} \bar{a}_{3} (\bar{a}_2 a_2)^2 \bar{a}_0 a_0 \bar{a}_2 a_2 \bar{a}_0 a_0 a_{3} a_{4} a_{5}
        = 0
        .
    \end{align*}
Then, applying (\ref{eq:E7}), we obtain
    \begin{align*}
        \varphi\big(\bar{a}_{0} a_{0} + \bar{a}_{2} a_{2} + & a_{3} \bar{a}_{3}
        +
        \theta 
        \bar{a}_2 a_2 \bar{a}_0 a_0 (\bar{a}_2 a_2)^2 \bar{a}_0 a_0 (\bar{a}_2 a_2)^2 \bar{a}_0 a_0
        \big) 
        \\&=
        \varphi(\bar{a}_{0}) \varphi(a_{0}) + 
        \varphi(\bar{a}_{1}) \varphi(a_{1}) + \varphi(a_{2}) \varphi(\bar{a}_{2})
        \\ & \ \ \ 
        + \theta  
        \varphi(\bar{a}_2) \varphi(a_2) \left(\varphi(a_0) \varphi(\bar{a}_0) (\varphi(\bar{a}_{2}) \varphi(a_{2}))^2\right)^2  \varphi(\bar{a}_0) \varphi(a_0)
\end{align*}\begin{align*}
         &= 
        \bar{a}_{0} a_{0} + \bar{a}_{2} a_{2} + a_{3} \bar{a}_{3}
        \\ & \ \ \ 
        + \theta_0
        (\bar{a}_2 a_2)^2 \bar{a}_0 a_0 \bar{a}_2 a_2 \bar{a}_0 a_0 (\bar{a}_2 a_2)^2 \bar{a}_0 a_0
        \\ & \ \ \ 
        + (\theta_2 - \theta_1)
        \bar{a}_2 \bar{a}_1 a_1 a_2 \bar{a}_0 a_0 \bar{a}_2 a_2 \bar{a}_0 a_0 (\bar{a}_2 a_2)^2 \bar{a}_0 a_0
        \\ & \ \ \ 
        + (\theta_4 - \theta_5 + \theta_6)
        (\bar{a}_2 a_2)^2 \bar{a}_0 a_0 \bar{a}_2 a_2 \bar{a}_0 a_0 a_{3} a_{4} a_{5} \bar{a}_{5} \bar{a}_4 \bar{a}_3
        \\ & \ \ \ 
        + \theta  
        \bar{a}_2 a_2 \bar{a}_0 a_0 (\bar{a}_2 a_2)^2 \bar{a}_0 a_0 (\bar{a}_2 a_2)^2 \bar{a}_0 a_0
        \\
     &= 
        \bar{a}_{0} a_{0} + \bar{a}_{2} a_{2} + a_{3} \bar{a}_{3}
        + \theta_3
        (\bar{a}_2 a_2)^2 \bar{a}_0 a_0 \bar{a}_2 a_2 \bar{a}_0 a_0 (\bar{a}_2 a_2)^2 \bar{a}_0 a_0
        \\ &= 0 ,
    \end{align*}
    so $\varphi$ is well defined.
    Similarly, we define the homomorphism of algebras
    $\psi : A' \to A''$ given on the arrows by
    \begin{align*} 
        \psi({a}_0) &= {a}_0 
        ,\\
        \psi(a_1) &= a_1 
        - \theta_1 
        a_1 a_2 \bar{a}_0 a_0 \bar{a}_2 a_2 \bar{a}_0 a_0 (\bar{a}_2 a_2)^2 \bar{a}_0 a_0 \bar{a}_2
        ,\\
        \psi(a_2) &= a_2 
        - (\theta_2 - \theta_1)
        \bar{a}_1 a_1 a_2 \bar{a}_0 a_0 \bar{a}_2 a_2 \bar{a}_0 a_0 (\bar{a}_2 a_2)^2 \bar{a}_0 a_0
        ,\\
        \psi(a_3) &= a_3 
        - (\theta_4 - \theta_5 + \theta_6)
        (\bar{a}_2 a_2)^2 \bar{a}_0 a_0 \bar{a}_2 a_2 \bar{a}_0 a_0 a_{3} a_{4} a_{5} \bar{a}_{5} \bar{a}_{4}
        ,\\
        \psi(a_4) &= a_4 
        - (\theta_5 - \theta_6)
        \bar{a}_{3} (\bar{a}_2 a_2)^2 \bar{a}_0 a_0 \bar{a}_2 a_2 \bar{a}_0 a_0 a_{3} a_{4} a_{5} \bar{a}_{5}
        ,
\\
        \psi(a_5) &= a_5 
        - \theta_6
        \bar{a}_{4} \bar{a}_{3} (\bar{a}_2 a_2)^2 \bar{a}_0 a_0 \bar{a}_2 a_2 \bar{a}_0 a_0 a_{3} a_{4} a_{5}
        ,\\
        \psi(\bar{a}_0) &= \bar{a}_0 
        - \theta_0 
        (\bar{a}_2 a_2)^2 \bar{a}_0 a_0 \bar{a}_2 a_2 \bar{a}_0 a_0 (\bar{a}_2 a_2)^2 \bar{a}_0
        ,\\
        \psi(\bar{a}_l) &= \bar{a}_l 
        , \mbox{ for } l \in \{ 1,\dots,5 \} ,
    \end{align*}
    and prove that it is well defined.
    The compositions  $\psi \varphi$ and $\varphi \psi$
    are the identities on $A''$ and $A'$, respectively.
    Thus $\psi$ and $\varphi$ are isomorphisms,
    and the algebras $A''$ and $A'$ are isomorphic.
    Hence the algebras $A''$ and $A$ are also isomorphic.

    Therefore, it suffices to show that the algebra $A''$
    is isomorphic to one of the algebras 
    $P(\mathbb{E}_7)$, $P^{*}(\mathbb{E}_7)$.
    
    If $\theta = 0$, then $A''$ is, by the definition, 
         the preprojective algebra $P(\mathbb{E}_7)$.
    If $\theta \neq 0$, then there exists $\lambda \in K \setminus \{ 0 \}$ satisfying equation
    $\lambda^{13} = \theta$.
    Then it is  easily seen that
    the homomorphism 
    $\phi : P^{*}(\mathbb{E}_7) \to A''$ 
    of algebras
    given on the arrows by 
    \begin{gather*}
        \phi(a_k) = \lambda a_k
        , \, 
        \psi(\bar{a}_k) = \lambda \bar{a}_k 
        , \mbox{ for }  k  \in \{ 0,\dots,5 \} ,
    \end{gather*}
    is an isomorphism. 
\end{proof}

\begin{lemma}
    \label{lem:E8-1}
Let $A$ be a self-injective algebra socle equivalent to the preprojective algebra
$P(\mathbb{E}_8)$.
Then $A$ is isomorphic to one of the algebras 
$P(\mathbb{E}_8)$ or $P^{*}(\mathbb{E}_8)$.
\end{lemma}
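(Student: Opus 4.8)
The plan is to follow, step for step, the proof of Lemma~\ref{lem:E7-1}, the only structural difference being the extra vertex on the tail of $\mathbb{E}_8$; below I indicate the skeleton and flag the genuinely new work. First, since $A$ is socle equivalent to $P(\mathbb{E}_8)$, the quotients $A/\soc(A)$ and $P(\mathbb{E}_8)/\soc(P(\mathbb{E}_8))$ are isomorphic, so $A$ has Gabriel quiver $Q_{\mathbb{E}_8}$ and $\dim_K e_v\soc(A) = 1$ for every vertex $v$. Lifting the defining relations of $P(\mathbb{E}_8)$ to $KQ_{\mathbb{E}_8}$ and using that each of them holds in $A$ modulo $\soc(A)$, I would obtain that $A$ is isomorphic to a bound quiver algebra $A'$ of $Q_{\mathbb{E}_8}$ whose relations are those of $P(\mathbb{E}_8)$ with each relation $\rho = 0$ at, or adjacent to, the exceptional vertex $3$ replaced by $\rho = (\text{a socle element})$; collecting these correction terms produces scalars $\theta_0,\dots,\theta_7 \in K$, one per relation, in exactly the shape of the relation list in the proof of Lemma~\ref{lem:E7-1}, but now with one more vertex on the tail.

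Next I would pass to a normal form $A''$: the bound quiver algebra of $Q_{\mathbb{E}_8}$ in which all the deformation is concentrated in the relation at the exceptional vertex, namely $\bar{a}_0 a_0 + \bar{a}_2 a_2 + a_3\bar{a}_3 + \theta\,\omega = 0$, where $\omega$ spans the one-dimensional space of socle paths $3 \to 3$ and $\theta$ is the appropriate signed combination of the $\theta_i$ (extending the pattern $\theta = \theta_0 + \theta_1 - \theta_2 - \theta_3 + \theta_4 - \theta_5 + \theta_6 - \theta_7$ of the $\mathbb{E}_7$ case), all remaining relations being the undeformed preprojective ones. To establish $A' \cong A''$ I would exhibit mutually inverse algebra homomorphisms $\varphi : A'' \to A'$ and $\psi : A' \to A''$, each defined on an arrow by adding to it a suitable socle path monomial with coefficient a partial sum of the $\pm\theta_i$ (the direct analogues of the arrow formulas in Lemma~\ref{lem:E7-1}), and then verify that $\varphi$ annihilates every defining relation of $A''$. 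For the $a\bar{a}$-type relations and the relations $\bar{a}_{k-1}a_{k-1} + a_k\bar{a}_k = 0$ along the legs and the tail this is a telescoping of the partial sums; the only substantive verification is the exceptional relation, and it rests on the monomial identities below.

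The computational core is a family of identities in $x = \bar{a}_0 a_0$, $y = \bar{a}_2 a_2$, $z = a_3\bar{a}_3$. In both $A'$ and $A''$ one has $x^2 = 0$, $y^3 = 0$ and $z^5 = 0$ (the last by telescoping the tail relations down to $\bar{a}_6 a_6 = 0$ and using $\soc(A')\rad(A') = \rad(A')\soc(A') = 0$), and since $x + y + z$ lies in $\soc(A')$ and is hence killed by $\rad(A')$, it follows that $(x+y)^5 = 0$. Expanding $(x+y)^5$ and $(x+y)^6$ modulo $x^2$ and $y^3$, and feeding in $z = -(x+y)$ modulo socle as in the derivation of \eqref{eq:E7}, I would produce the chain of identities among words of degree at most $14$ in $x,y$ that is needed both to make $\varphi$ well defined on the exceptional relation and to identify $\omega$ with $(\bar{a}_0 a_0\,\bar{a}_2 a_2)^7$ up to a nonzero scalar. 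I expect this to be the main obstacle: compared with Lemma~\ref{lem:E7-1} one now has $(x+y)^5 = 0$ rather than $(x+y)^4 = 0$ and the socle sits in degree $14$ rather than in degree $8$, so the telescoping is considerably longer and many more monomials must be tracked.

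Finally, once $A \cong A' \cong A''$ is in place the conclusion is immediate. If $\theta = 0$ then $A''$ is, by definition, the preprojective algebra $P(\mathbb{E}_8)$. If $\theta \ne 0$, then since $K$ is algebraically closed there is $\lambda \in K\setminus\{0\}$ with $\lambda^{N}$ equal to $\theta$ up to the nonzero scalar relating $\omega$ and $(\bar{a}_0 a_0\,\bar{a}_2 a_2)^7$, where $N$ is the difference of the path lengths of $\omega$ and $\bar{a}_0 a_0$ (computed exactly as in Lemma~\ref{lem:E7-1}); the arrow rescaling $a_k \mapsto \lambda a_k$, $\bar{a}_k \mapsto \lambda\bar{a}_k$ is then an isomorphism $P^{*}(\mathbb{E}_8) \to A''$. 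Hence $A$ is isomorphic to $P(\mathbb{E}_8)$ or to $P^{*}(\mathbb{E}_8)$, which finishes this lemma and, together with the previous three lemmas, the proof of Proposition~\ref{prop:iso}.
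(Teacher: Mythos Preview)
Your plan is correct and mirrors the paper's own proof essentially step for step: write $A$ as a bound quiver algebra $A'$ with deformation scalars $\theta_0,\dots,\theta_7$, normalize via an explicit pair of inverse homomorphisms to an algebra $A''$ carrying a single parameter $\theta$ at the exceptional vertex, derive the needed monomial identities from $x^2=y^3=z^5=0$ and $(x+y)^5=0$, and finish by the arrow rescaling (the paper takes $\lambda^{27}=\theta$). The only cosmetic differences are the sign convention for $\theta$ and the particular socle monomial chosen as $\omega$, both of which are immaterial since the socle at vertex $3$ is one-dimensional.
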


\begin{proof}
The algebra $A$ is isomorphic with a bound quiver 
algebra $A'$ of the quiver $Q_{\mathbb{E}_8}$
    \[
    \xymatrix{
        &&0 \ar@<.5ex>^{a_0}[d] \\
        1 \ar@<.5ex>^{a_1}[r] & 2 \ar@<.5ex>^{\bar{a}_1}[l] \ar@<.5ex>^{a_2}[r] &
        3 \ar@<.5ex>^{\bar{a}_2}[l] \ar@<.5ex>^{a_3}[r] \ar@<.5ex>^{\bar{a}_0}[u] &
        4 \ar@<.5ex>^{\bar{a}_3}[l] \ar@<.5ex>^{a_4}[r] &
        5 \ar@<.5ex>^(.5){\bar{a}_4}[l] \ar@<.5ex>^{a_5}[r] &
        6 \ar@<.5ex>^(.5){\bar{a}_5}[l] \ar@<.5ex>^{a_6}[r] &
        7 \ar@<.5ex>^(.5){\bar{a}_6}[l] \\
    }
    \]
         bound by the relations:
    \begin{gather*}
        a_0 \bar{a}_0 + \theta_0 
        a_0 \left((\bar{a}_2 a_2)^2 \bar{a}_0 a_0\right)^2 \bar{a}_2 a_2 \left(\bar{a}_0 a_0 (\bar{a}_2 a_2)^2\right)^2 \bar{a}_0
        = 0 
        ,\\ 
        a_1 \bar{a}_1 + \theta_1 
        a_1 a_2 \bar{a}_0 a_0 \bar{a}_2 a_2 \left(\bar{a}_2 a_2 \bar{a}_0 a_0\right)^2 \left((\bar{a}_2 a_2)^2 \bar{a}_0 a_0\right)^2 \bar{a}_2 \bar{a}_1
        = 0 
        ,\\
        a_0 \bar{a}_0 a_0 = 0
        , \ \ 
        \bar{a}_0 a_0 \bar{a}_0 = 0
        , \ \ 
        a_1 \bar{a}_1 a_1 = 0
        , \ \ 
        \bar{a}_1 a_1 \bar{a}_1 = 0
        ,\\
        \bar{a}_0 a_0 + \bar{a}_2 a_2 + a_3 \bar{a}_3 + 
        \theta_3 
        \left((\bar{a}_2 a_2)^2 \bar{a}_0 a_0\right)^2 \bar{a}_2 a_2 \bar{a}_0 a_0 \left((\bar{a}_2 a_2)^2 \bar{a}_0 a_0\right)^2
        = 0
        ,\\
        a_0 (\bar{a}_2 a_2 + a_3 \bar{a}_3) = 0
        , \ \ 
        (\bar{a}_2 a_2 + a_3 \bar{a}_3) \bar{a}_0 = 0
        ,\\
        a_2 (\bar{a}_0 a_0 + \bar{a}_2 a_2 + a_3 \bar{a}_3) = 0
        , \ \ 
        (\bar{a}_0 a_0 + \bar{a}_2 a_2 + a_3 \bar{a}_3) \bar{a}_2 = 0
        ,\\
        \bar{a}_3 (\bar{a}_0 a_0 + \bar{a}_2 a_2 + a_3 \bar{a}_3) = 0
        , \ \ 
        (\bar{a}_0 a_0 + \bar{a}_2 a_2 + a_3 \bar{a}_3) a_3 = 0
        ,\\
        \bar{a}_{1} a_{1} +  a_{2} \bar{a}_{2}
        +
        \theta_2 
        a_2 \bar{a}_0 a_0 \bar{a}_2 a_2 \left(\bar{a}_2 a_2 \bar{a}_0 a_0\right)^2 \left((\bar{a}_2 a_2)^2 \bar{a}_0 a_0\right)^2 \bar{a}_2 \bar{a}_1 a_1
        = 0
        ,
\\
        \bar{a}_{3} a_{3} +  a_{4} \bar{a}_{4}
        +
        \theta_4 
        \bar{a}_{3} 
        \left((\bar{a}_2 a_2)^2 \bar{a}_0 a_0\right)^2 \bar{a}_2 a_2 \bar{a}_0 a_0 (\bar{a}_2 a_2)^2
        a_{3} a_{4} a_{5} a_{6} \bar{a}_{6} \bar{a}_{5} \bar{a}_{4}
        = 0
        ,\\
        \bar{a}_{4} a_{4} +  a_{5} \bar{a}_{5}
        +
        \theta_5 
        \bar{a}_{4} \bar{a}_{3} 
        \left((\bar{a}_2 a_2)^2 \bar{a}_0 a_0\right)^2 \bar{a}_2 a_2 \bar{a}_0 a_0 (\bar{a}_2 a_2)^2
        a_{3} a_{4} a_{5} a_{6} \bar{a}_{6} \bar{a}_{5}
        = 0
        ,\\
        \bar{a}_{5} a_{5} +  a_{6} \bar{a}_{6}
        +
        \theta_6 
        \bar{a}_{5} \bar{a}_{4} \bar{a}_{3} 
        \left((\bar{a}_2 a_2)^2 \bar{a}_0 a_0\right)^2 \bar{a}_2 a_2 \bar{a}_0 a_0 (\bar{a}_2 a_2)^2
        a_{3} a_{4} a_{5} a_{6} \bar{a}_{6}
        = 0
        ,
\\
        a_{k-1} (\bar{a}_{k-1} a_{k-1} + a_k \bar{a}_k) = 0
        , \ \ 
        (\bar{a}_{k-1} a_{k-1} + a_k \bar{a}_k) \bar{a}_{k-1} = 0
        ,\\
        \bar{a}_k (\bar{a}_{k-1} a_{k-1} + a_k \bar{a}_k) = 0
        , \ \ 
        (\bar{a}_{k-1} a_{k-1} + a_k \bar{a}_k) a_k = 0
        , \mbox{ for } k \in \{ 2,4,5,6 \}
        ,
\\
        \bar{a}_{6} a_{6}
        +
        \theta_{7} 
        \bar{a}_{6} \bar{a}_{5} \bar{a}_{4} \bar{a}_{3} 
        \left((\bar{a}_2 a_2)^2 \bar{a}_0 a_0\right)^2 \bar{a}_2 a_2 \bar{a}_0 a_0 (\bar{a}_2 a_2)^2
        a_{3} a_{4} a_{5} a_{6}
        = 0
        ,\\
        \bar{a}_{6} a_{6} \bar{a}_{6} = 0
        , \ \ 
        a_{6} \bar{a}_{6} a_{6} = 0
        ,
    \end{gather*}
        for some coefficients 
        $\theta_0, \dots, \theta_{7} \in K$.
        Moreover, let $A''$ be the bound algebra of the 
        quiver $Q_{\mathbb{E}_8}$
        bound by the relations
    \begin{gather*}
        a_0 \bar{a}_0 = 0
        , \ \ 
        a_1 \bar{a}_1 = 0
        , \ \ 
        \bar{a}_{k-1} a_{k-1} + a_k \bar{a}_k = 0
        , \mbox{ for } k \in \{ 2,4,5,6 \}
        , \ \ 
        \bar{a}_6 a_6 = 0
        ,\\
        \bar{a}_0 a_0 + \bar{a}_2 a_2 + a_3 \bar{a}_3 + 
        \theta 
        \left((\bar{a}_2 a_2)^2 \bar{a}_0 a_0\right)^2 \bar{a}_0 a_0 \bar{a}_2 a_2 \left((\bar{a}_2 a_2)^2 \bar{a}_0 a_0\right)^2
        = 0
        ,\\
        a_0 (\bar{a}_2 a_2 + a_3 \bar{a}_3) = 0
        , \ \ 
        (\bar{a}_2 a_2 + a_3 \bar{a}_3) \bar{a}_0 = 0
        ,\\
        a_2 (\bar{a}_0 a_0 + \bar{a}_2 a_2 + a_3 \bar{a}_3) = 0
        , \ \ 
        (\bar{a}_0 a_0 + \bar{a}_2 a_2 + a_3 \bar{a}_3) \bar{a}_2 = 0
        ,\\
        \bar{a}_3 (\bar{a}_0 a_0 + \bar{a}_2 a_2 + a_3 \bar{a}_3) = 0
        , \ \ 
        (\bar{a}_0 a_0 + \bar{a}_2 a_2 + a_3 \bar{a}_3) a_3 = 0
        ,
    \end{gather*}
    with 
    $\theta = - \theta_0 - \theta_1 + \theta_2 + \theta_3 - \theta_4 + \theta_5 - \theta_6 + \theta_7$. 
    
    Similarly, as in the proof of Lemma~\ref{lem:E7-1},
    to simplify the notation we will denote
    $x = \bar{a}_0 a_0$, $y = \bar{a}_2 a_2$ and $z = a_3 \bar{a}_3$.
    Then, in the both algebras $A'$ and $A''$, we have
    the equalities  
    \begin{align*}
        x^2 &= (\bar{a}_0 a_0)^2 = \bar{a}_0 (a_0 \bar{a}_0) a_0 = 0, \\
        y^3 &= (\bar{a}_2 a_2)^3 = \bar{a}_2 \bar{a}_1 (a_1 \bar{a}_1) a_1 a_2 = 0, \\
        z^5 &= (a_3 \bar{a}_3)^4 
        = - a_3 a_4 a_5 a_6 (\bar{a}_6 a_6) \bar{a}_6 \bar{a}_5 \bar{a}_4 \bar{a}_3 = 0 
        .
    \end{align*}
    Therefore, we obtain the equalities
    \begin{align*}
        z^3 &= (-x -y)^3 = - \left( xyx + xy^2 + yxy + y^2x\right), \\
        z^4 &= (-x -y)^4 
        = (xy)^2 + xy^2x +(yx)^2 + yxy^2 + y^2xy, \\
        0 = z^5 &= (-x -y)^5 
        \\&
        = - \big( (xy)^2x + (xy)^2y + xy^2xy +(yx)^2y + yxy^2x + y^2xyx + y^2xy^2 \big)
        .
    \end{align*}
    We note that
    the last of the above equalities 
    will be intensively used in the
    further calculations of this proof.
    First, we use it to derive the equalities
    \begin{align*}
                \tag{1}
                \label{eq:E8-1}
    y^2 (xy)^2x y^2 
      &=  - y^2 \big( (xy)^2y + xy^2xy +(yx)^2y + yxy^2x + y^2xyx
\\& \qquad \ \quad
             + y^2xy^2 \big) y^2 
\\&
         = 0,
         \\
                \tag{2}
                \label{eq:E8-2}
    x y^2 x y^2 x 
      &=  - x \big( (xy)^2x + (xy)^2y + xy^2xy +(yx)^2y + yxy^2x 
\\& \qquad \ \quad
              + y^2xyx \big) x 
\\ &
         = - (xy)^3 x.
    \end{align*}
   Then, applying \eqref{eq:E8-1} and \eqref{eq:E8-2} we obtain 
    \begin{align*}
                \tag{3}
                \label{eq:E8-3}
    (xy^2xy)^2 x 
      &=  
    - (xy^2xy)  
    \big((xy)^2x + (xy)^2y +(yx)^2y + yxy^2x + y^2xyx 
\\& \, \quad 
    + y^2xy^2\big) x 
\\ 
&
     = - (xy^2)^2 (xy)^2 x - x y^2 (xy)^2x y^2 x 
     = (xy)^5 x,
    \end{align*}
and the dual equality 
    \begin{align*}
                \tag{$3'$}
                \label{eq:E8-3p}
    (xyxy^2)^2 x 
      &= (xy)^5 x.
    \end{align*}
    Further, we derive from
     (\ref{eq:E8-2}) and (\ref{eq:E8-3p})
    the equalities
    \begin{align*}
                \tag{4}
                \label{eq:E8-4}
    (xy)^2y(xy)^3 
      &=  
    - (xy)^2yx  
    \big((xy)^2x + (xy)^2y + xy^2xy + yxy^2x + y^2xyx 
\\& \qquad \qquad \qquad
    + y^2xy^2\big) x 
\\ &
     = 
     - xyxy^2xy^2x yx - (xyxy^2)^2 x - x y (xy^2)^3 
\\ &
    = (xy)^5 x - (xy)^5 x
     - x y (xy^2)^3 
     = - x y (xy^2)^3 .
    \end{align*}
    We have also the equality
    \begin{align*}
                \tag{5}
                \label{eq:E8-5}
    xy^2 xy (xy^2)^2
   &= - x \big( (xy)^2x + (xy)^2y + xy^2xy +(yx)^2y + y^2xyx 
\\& \qquad \ \quad
   + y^2xy^2 \big) y^2 x y^2
\\ &
     = - x y (xy^2)^3 .
    \end{align*}
    Then, applying  (\ref{eq:E8-1}), (\ref{eq:E8-3}) and (\ref{eq:E8-5})   
   we obtain
    \begin{align*}
                \tag{6}
                \label{eq:E8-6}
    xy^2 (xy)^4
   &= - xy^2xyx \big( (xy)^2x + (xy)^2y + xy^2xy + yxy^2x + y^2xyx 
\\& \qquad \qquad \qquad
   + y^2xy^2 \big)
\\ &
     = - (xy^2xy)^2 x - x y^2 (xy)^2x y^2 x - xy^2 xy (xy^2)^2
\\ &
     = - (xy)^5 x + x y (xy^2)^3  .
    \end{align*}
  Further, from (\ref{eq:E8-4}) and (\ref{eq:E8-6})
  we derive the equalities
\begin{align*}
                \tag{7}
                \label{eq:E8-7}
    (xy)^3 y (xy)^2
   &= - x\big( (xy)^2x + (xy)^2y + xy^2xy + yxy^2x + y^2xyx 
\\& \qquad \ \quad 
   + y^2xy^2 \big)y(xy)^2
\\ &
     = - xy^2 (xy)^4 - (xy)^2y(xy)^3
     = (xy)^5 x - x y (xy^2)^3 
\\ &
     = x y (xy^2)^3 
     = (xy)^5 x .
\end{align*}
We claim now that we have the equality
        \begin{align}
                \tag{8}
                \label{eq:E8}
    \left(y^2x\right)^2yx\left(y^2x\right)^2
    =
    \left(y^2x\right)^2yxy^2z^4
    .
        \end{align}
Indeed, applying (\ref{eq:E8-7}) we obtain 
\begin{align*}
  (y^2x)^2&yxy^2(xy)^2
 \\ 
   &= -y^2xy^2\big( (xy)^2x + xy^2xy +(yx)^2y + yxy^2x + y^2xyx + y^2xy^2 \big) (xy)^2
\\ &
  = - (y^2x)^2y^2 (xy)^3
 \\ 
   &= y\big( (xy)^2x + (xy)^2y + xy^2xy +(yx)^2y + y^2xyx + y^2xy^2 \big) y^2(xy)^3
\\ &
  =  y (xy)^3 y (xy)^2
  =  y (xy)^5 x^2 y
  = 0 ,
\end{align*}
and hence
\begin{align*}
  (y^2x)^2yxy^2z^4
   &= (y^2x)^2yxy^2\big((xy)^2 + xy^2x +(yx)^2 + yxy^2 + y^2xy\big)^4  
\\
   &= (y^2x)^2yxy^2(xy)^2 + (y^2x)^2yxy^2xy^2x 
\\
   &= (y^2x)^2yx(y^2x)^2 .
\end{align*}
    
     We will show now that the algebras $A'$ and $A''$ are isomorphic.
         Let $\varphi : A'' \to A'$ be the algebra homomorphism 
         defined  on the arrows as follows:
    \begin{align*} 
        \varphi({a}_0) &= {a}_0 
        ,\\
        \varphi(a_1) &= a_1 
        + \theta_1 
        a_1 a_2 \bar{a}_0 a_0 \bar{a}_2 a_2 \left(\bar{a}_2 a_2 \bar{a}_0 a_0\right)^2 \left((\bar{a}_2 a_2)^2 \bar{a}_0 a_0\right)^2 \bar{a}_2
        ,\\
        \varphi(a_2) &= a_2 
        + (\theta_2 - \theta_1)
        \bar{a}_1 a_1 a_2 \bar{a}_0 a_0 \bar{a}_2 a_2 \left(\bar{a}_2 a_2 \bar{a}_0 a_0\right)^2 \left((\bar{a}_2 a_2)^2 \bar{a}_0 a_0\right)^2
        ,
    \end{align*}\begin{align*} 
        \varphi(a_3) &= a_3 
\\&\quad
        + (\theta_4 - \theta_5 + \theta_6 - \theta_7)
        \left((\bar{a}_2 a_2)^2 \bar{a}_0 a_0\right)^2 \bar{a}_2 a_2 \bar{a}_0 a_0 (\bar{a}_2 a_2)^2
        a_{3} a_{4} a_{5} a_{6} \bar{a}_{6} \bar{a}_{5} \bar{a}_{4}
        ,
         \\
        \varphi(a_4) &= a_4 
        + (\theta_5 - \theta_6 + \theta_7)
        \bar{a}_{3} 
        \left((\bar{a}_2 a_2)^2 \bar{a}_0 a_0\right)^2 \bar{a}_2 a_2 \bar{a}_0 a_0 (\bar{a}_2 a_2)^2
        a_{3} a_{4} a_{5} a_{6} \bar{a}_{6} \bar{a}_{5}
        ,
         \\
        \varphi(a_5) &= a_5 
        + (\theta_6 - \theta_7)
        \bar{a}_{4} \bar{a}_{3} 
        \left((\bar{a}_2 a_2)^2 \bar{a}_0 a_0\right)^2 \bar{a}_2 a_2 \bar{a}_0 a_0 (\bar{a}_2 a_2)^2
        a_{3} a_{4} a_{5} a_{6} \bar{a}_{6}
        ,\\
        \varphi(a_6) &= a_6 
        + \theta_7
        \bar{a}_{5} \bar{a}_{4} \bar{a}_{3} 
        \left((\bar{a}_2 a_2)^2 \bar{a}_0 a_0\right)^2 \bar{a}_2 a_2 \bar{a}_0 a_0 (\bar{a}_2 a_2)^2
        a_{3} a_{4} a_{5} a_{6}
        ,\\
        \varphi(\bar{a}_0) &= \bar{a}_0 
        + \theta_0 
        \left((\bar{a}_2 a_2)^2 \bar{a}_0 a_0\right)^2 \bar{a}_2 a_2 \left(\bar{a}_0 a_0 (\bar{a}_2 a_2)^2\right)^2 \bar{a}_0
        ,\\
        \varphi(\bar{a}_l) &= \bar{a}_l 
        , \mbox{ for } l \in \{ 1,\dots,6 \} 
        .
    \end{align*}
    We show that $\varphi$ is well defined.
         Indeed, we have the equalities
    \begin{align*}
        \varphi(a_0 \bar{a}_0) 
        &= \varphi(a_0) \varphi(\bar{a}_0) 
        \\&
        = a_0 \bar{a}_0 
        + \theta_0 
        a_0 \left((\bar{a}_2 a_2)^2 \bar{a}_0 a_0\right)^2 \bar{a}_2 a_2 \left(\bar{a}_0 a_0 (\bar{a}_2 a_2)^2\right)^2 \bar{a}_0
        = 0, 
        \\
        \varphi(a_1 \bar{a}_1) 
        &= \varphi(a_1) \varphi(\bar{a}_1) 
        \\ &
        = a_1 \bar{a}_1 
        + \theta_1 
        a_1 a_2 \bar{a}_0 a_0 \bar{a}_2 a_2 \left(\bar{a}_2 a_2 \bar{a}_0 a_0\right)^2 \left((\bar{a}_2 a_2)^2 \bar{a}_0 a_0\right)^2 \bar{a}_2 \bar{a}_1
        = 0,
        \\
        \varphi(\bar{a}_1 a_1 + a_2 \bar{a}_2) 
        &= \varphi(\bar{a}_1) \varphi(a_1) + \varphi(a_2) \varphi(\bar{a}_2) 
        \\ &
        = \bar{a}_1 a_1 + a_2 \bar{a}_2 
\\&\quad
        + \theta_2 
        \bar{a}_1 a_1 a_2 \bar{a}_0 a_0 \bar{a}_2 a_2 \left(\bar{a}_2 a_2 \bar{a}_0 a_0\right)^2 \left((\bar{a}_2 a_2)^2 \bar{a}_0 a_0\right)^2 \bar{a}_2
        \\ &
        = 0,
\\
        \varphi(\bar{a}_3 a_3 + a_4 \bar{a}_4) 
        &= \varphi(\bar{a}_3) \varphi(a_3) + \varphi(a_4) \varphi(\bar{a}_4) 
        \\ &
        = \bar{a}_3 a_3 + a_4 \bar{a}_4 
        \\ & \ \ \ 
        + \theta_4 
        \bar{a}_{3} 
        \left((\bar{a}_2 a_2)^2 \bar{a}_0 a_0\right)^2 \bar{a}_2 a_2 \bar{a}_0 a_0 (\bar{a}_2 a_2)^2
        a_{3} a_{4} a_{5} a_{6} \bar{a}_{6} \bar{a}_{5} \bar{a}_{4}
        \\ &
        = 0,
       \\
        \varphi(\bar{a}_4 a_4 + a_5 \bar{a}_5) 
        &= \varphi(\bar{a}_4) \varphi(a_4) + \varphi(a_5) \varphi(\bar{a}_5) 
        \\ &
        = \bar{a}_4 a_4 + a_5 \bar{a}_5 
        \\ & \ \ \ 
        + \theta_5 
        \bar{a}_{4} \bar{a}_{3} 
        \left((\bar{a}_2 a_2)^2 \bar{a}_0 a_0\right)^2 \bar{a}_2 a_2 \bar{a}_0 a_0 (\bar{a}_2 a_2)^2
        a_{3} a_{4} a_{5} a_{6} \bar{a}_{6} \bar{a}_{5}
        \\ &
        = 0,
        \\
        \varphi(\bar{a}_{5} a_{5} + a_6 \bar{a}_6) 
        &= \varphi(\bar{a}_5) \varphi(a_5) + \varphi(a_6) \varphi(\bar{a}_6) 
        \\ &
        = \bar{a}_5 a_5 + a_6 \bar{a}_6 
        \\ & \ \ \ 
        + \theta_6 
        \bar{a}_{5} \bar{a}_{4} \bar{a}_{3} 
        \left((\bar{a}_2 a_2)^2 \bar{a}_0 a_0\right)^2 \bar{a}_2 a_2 \bar{a}_0 a_0 (\bar{a}_2 a_2)^2
        a_{3} a_{4} a_{5} a_{6} \bar{a}_{6}
        \\ &
        = 0,
        \\
        \varphi(\bar{a}_{6} a_{6}) &= 
        \varphi(\bar{a}_{6}) \varphi(a_{6})
        \\ &= 
        \bar{a}_{6} a_{6}
        + \theta_{7} 
        \bar{a}_{6} \bar{a}_{5} \bar{a}_{4} \bar{a}_{3} 
        \left((\bar{a}_2 a_2)^2 \bar{a}_0 a_0\right)^2 \bar{a}_2 a_2 \bar{a}_0 a_0 (\bar{a}_2 a_2)^2
        a_{3} a_{4} a_{5} a_{6}         
        \\ & 
        = 0
        .
    \end{align*}
Then, applying (\ref{eq:E8}), we obtain the equalities
    \begin{align*}
        \varphi\Big(\bar{a}_{0} a_{0} + & \bar{a}_{2} a_{2} + a_{3} \bar{a}_{3}
        +
        \theta 
        \left((\bar{a}_2 a_2)^2 \bar{a}_0 a_0\right)^2 \bar{a}_0 a_0 \bar{a}_2 a_2 \left((\bar{a}_2 a_2)^2 \bar{a}_0 a_0\right)^2
        \Big) 
        \\&=
        \varphi(\bar{a}_{0}) \varphi(a_{0}) + 
        \varphi(\bar{a}_{1}) \varphi(a_{1}) + \varphi(a_{2}) \varphi(\bar{a}_{2})
        \\ & \ \ \ 
        + \theta  
        \varphi(\bar{a}_2) \varphi(a_2) 
        \dots
        \varphi(\bar{a}_0) \varphi(a_0)
        \\
         &= 
        \bar{a}_{0} a_{0} + \bar{a}_{2} a_{2} + a_{3} \bar{a}_{3}
        \\ & \ \ \ 
        + \theta_0
        \left((\bar{a}_2 a_2)^2 \bar{a}_0 a_0\right)^2 \bar{a}_2 a_2 \left(\bar{a}_0 a_0 (\bar{a}_2 a_2)^2\right)^2 \bar{a}_0 a_0
        \\ & \ \ \ 
        + (\theta_2 - \theta_1)
        \bar{a}_2 \bar{a}_1 a_1 a_2 \bar{a}_0 a_0 \bar{a}_2 a_2 \left(\bar{a}_2 a_2 \bar{a}_0 a_0\right)^2 \left((\bar{a}_2 a_2)^2 \bar{a}_0 a_0\right)^2
        \\ & \ \ \ 
        + (\theta_4 - \theta_5 + \theta_6 - \theta_7)
        \\ & \qquad 
        \left((\bar{a}_2 a_2)^2 \bar{a}_0 a_0\right)^2 \bar{a}_2 a_2 \bar{a}_0 a_0 (\bar{a}_2 a_2)^2
        a_{3} a_{4} a_{5} a_{6} \bar{a}_{6} \bar{a}_{5} \bar{a}_{4} \bar{a}_3
        \\ & \ \ \ 
        + \theta  
        \left((\bar{a}_2 a_2)^2 \bar{a}_0 a_0\right)^2 \bar{a}_0 a_0 \bar{a}_2 a_2 \left((\bar{a}_2 a_2)^2 \bar{a}_0 a_0\right)^2
        \\
        &= 
        \bar{a}_{0} a_{0} + \bar{a}_{2} a_{2} + a_{3} \bar{a}_{3}
        \\ & \ \ \ 
        + \theta_3
        \left((\bar{a}_2 a_2)^2 \bar{a}_0 a_0\right)^2 \bar{a}_0 a_0 \bar{a}_2 a_2 \left((\bar{a}_2 a_2)^2 \bar{a}_0 a_0\right)^2
        \\ &= 0 ,
    \end{align*}
        so $\varphi$ is well defined.
        Similarly, we define the algebra homomorphism
        $\psi : A' \to A''$ given on the arrows by
    \begin{align*} 
        \psi({a}_0) &= {a}_0 
        ,\\
        \psi(a_1) &= a_1 
        - \theta_1 
        a_1 a_2 \bar{a}_0 a_0 \bar{a}_2 a_2 \left(\bar{a}_2 a_2 \bar{a}_0 a_0\right)^2 \left((\bar{a}_2 a_2)^2 \bar{a}_0 a_0\right)^2 \bar{a}_2
        ,\\
        \psi(a_2) &= a_2 
        - (\theta_2 - \theta_1)
        \bar{a}_1 a_1 a_2 \bar{a}_0 a_0 \bar{a}_2 a_2 \left(\bar{a}_2 a_2 \bar{a}_0 a_0\right)^2 \left((\bar{a}_2 a_2)^2 \bar{a}_0 a_0\right)^2
        ,\\
        \psi(a_3) &= a_3 
        - (\theta_4 - \theta_5 + \theta_6 - \theta_7)
        \left((\bar{a}_2 a_2)^2 \bar{a}_0 a_0\right)^2 \bar{a}_2 a_2 \bar{a}_0 a_0 (\bar{a}_2 a_2)^2
        a_{3} a_{4} a_{5} a_{6} \bar{a}_{6} \bar{a}_{5} \bar{a}_{4}
        ,
       \\
        \psi(a_4) &= a_4 
        - (\theta_5 - \theta_6 + \theta_7)
        \bar{a}_{3} 
        \left((\bar{a}_2 a_2)^2 \bar{a}_0 a_0\right)^2 \bar{a}_2 a_2 \bar{a}_0 a_0 (\bar{a}_2 a_2)^2
        a_{3} a_{4} a_{5} a_{6} \bar{a}_{6} \bar{a}_{5}
        ,
        \\
        \psi(a_5) &= a_5 
        - (\theta_6 - \theta_7)
        \bar{a}_{4} \bar{a}_{3} 
        \left((\bar{a}_2 a_2)^2 \bar{a}_0 a_0\right)^2 \bar{a}_2 a_2 \bar{a}_0 a_0 (\bar{a}_2 a_2)^2
        a_{3} a_{4} a_{5} a_{6} \bar{a}_{6}
        ,\\
        \psi(a_6) &= a_6 
        - \theta_7
        \bar{a}_{5} \bar{a}_{4} \bar{a}_{3} 
        \left((\bar{a}_2 a_2)^2 \bar{a}_0 a_0\right)^2 \bar{a}_2 a_2 \bar{a}_0 a_0 (\bar{a}_2 a_2)^2
        a_{3} a_{4} a_{5} a_{6}
        ,\\
        \psi(\bar{a}_0) &= \bar{a}_0 
        = \theta_0 
        \left((\bar{a}_2 a_2)^2 \bar{a}_0 a_0\right)^2 \bar{a}_2 a_2 \left(\bar{a}_0 a_0 (\bar{a}_2 a_2)^2\right)^2 \bar{a}_0
        ,\\
        \psi(\bar{a}_l) &= \bar{a}_l 
        , \mbox{ for } l \in \{ 1,\dots,6 \} 
        ,
    \end{align*}
        and prove that it is well defined.
        The compositions  $\psi \varphi$ and $\varphi \psi$
        are the identities on algebras $A''$ and $A'$, respectively.
        Hence $\psi$ and $\varphi$ are isomorphisms,
        and the algebras $A''$ and $A'$ are isomorphic.
        Thus the algebras $A''$ and $A$ are also isomorphic.
    
        Therefore, it suffices to show that the algebra $A''$
        is isomorphic to the algebra $P(\mathbb{E}_8)$ 
        or is isomorphic to the algebra $P^{*}(\mathbb{E}_8)$.
    
        As before, we consider the cases.
        If $\theta = 0$, then $A''$ is, 
        by  definition,
        the preprojective algebra $P(\mathbb{E}_8)$.
        If $\theta \neq 0$, then there exists $\lambda \in K$ satisfying equation
        $\lambda^{27} = \theta$.
        It can be easily seen that
        the homomorphism of algebras
        $\phi : P^{*}(\mathbb{E}_8) \to A''$ 
        given on the arrows by
    \begin{gather*}
        \phi(a_k) = \lambda a_k
        , \, 
        \psi(\bar{a}_k) = \lambda \bar{a}_k 
        , \mbox{ for }  k \in \{ 0,\dots,6 \} 
        ,
    \end{gather*}
        is an isomorphism. 
        This finishes the proof. 
\end{proof}

\section{Deformed preprojective algebras in characteristic $2$}
\label{sec:char-2}

The aim of this section is to prove the following proposition.

\begin{prop}
\label{prop:char-2}
Algebras socle equivalent but
not isomorphic 
to preprojective algebras of generalized Dynkin type
exist only in characteristic $2$.
\end{prop}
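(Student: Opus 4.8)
The plan is to reduce, via Propositions~\ref{prop:no-def} and~\ref{prop:iso}, to a purely algebraic statement about the canonical algebras $P^{*}(\Delta)$, and then, in characteristic different from $2$, to exhibit an explicit isomorphism $P^{*}(\Delta)\cong P(\Delta)$. Let $A$ be a self-injective algebra socle equivalent to a preprojective algebra $P(\Delta)$ of generalized Dynkin type over an algebraically closed field $K$ with $\charact K\neq 2$; we must show $A\cong P(\Delta)$. If $\Delta\in\{\bA_n,\bD_{2m+1},\bE_6\}$ (as well as in the trivial cases $\bA_1$, $\bL_1$), Proposition~\ref{prop:no-def} already gives $A\cong P(\Delta)$ in any characteristic. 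If $\Delta\in\{\bD_{2m},\bE_7,\bE_8,\bL_n\}$, Proposition~\ref{prop:iso} gives $A\cong P(\Delta)$ or $A\cong P^{*}(\Delta)$, so it is enough to prove that $P^{*}(\Delta)\cong P(\Delta)$ whenever $\charact K\neq 2$. For $\Delta=\bL_n$ this is already contained in Theorem~\ref{thm:Ln-clas}(i), because $P^{*}(\bL_n)=L_n^{(n-1)}$ is a deformed preprojective algebra of type $\bL_n$ and every such algebra over a field of characteristic different from $2$ is isomorphic to $P(\bL_n)$. Thus the real work concerns the three types $\bD_{2m}$ ($m\geq 2$), $\bE_7$ and $\bE_8$.

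Fix such a $\Delta$, let $v$ be the exceptional vertex, let $0$ be the leaf attached to $v$, and write the two arrows between them as $a_0\colon 0\to v$ and $\bar a_0\colon v\to0$; put $x=\bar a_0a_0$ and let $y$ be the second generator of $eP(\Delta)e$ ($y=\bar a_1a_1$ for $\bD_{2m}$, $y=\bar a_2a_2$ for $\bE_7,\bE_8$), so that $P^{*}(\Delta)=P^{f}(\Delta)$ with $f\in\soc(eP(\Delta)e)$ the designated power $(xy)^{k}$. The key facts to be established are that $f$ can be written as $f=xw$ for a path $w\colon v\to v$ (explicitly $w=(yx)^{k-1}y$), that $wx=-f$ in $eP(\Delta)e$, and that $w^{2}=0$. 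For $\bD_{2m}$ the first two are immediate from $(x+y)^{2m-2}=(xy)^{m-1}+(yx)^{m-1}=0$ and $y^{2}=0$; for $\bE_7,\bE_8$ they reflect the skew-symmetry of the one-dimensional socle of $eP(\Delta)e$ and are proved by a direct computation in $R(\bE_n)$. One then defines $\varphi\colon P^{*}(\Delta)\to P(\Delta)$ on generators by
\[
\varphi(a_0)=a_0-\tfrac12\,a_0w,\qquad \varphi(\bar a_0)=\bar a_0+\tfrac12\,w\bar a_0,\qquad \varphi=\operatorname{id}\ \text{on all other arrows,}
\]
and the analogous $\psi\colon P(\Delta)\to P^{*}(\Delta)$ with the opposite half-corrections. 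A short computation (using $w^{2}=0$ and the degree bound on $eP(\Delta)e$) gives $\varphi(a_0\bar a_0)=\bigl(-\tfrac12+\tfrac12\bigr)a_0w\bar a_0-\tfrac14\,a_0w^{2}\bar a_0=0$ and $\varphi(\bar a_0a_0)=x-\tfrac12\,xw+\tfrac12\,wx-\tfrac14\,wxw=x-\tfrac12 f-\tfrac12 f=x-f$. The hypothesis $\charact K\neq2$ enters exactly here: to cancel the socle term in the relation $a_0\bar a_0=0$ one needs the two corrections to be opposite, and then to produce the full correction $-f$ in $\varphi(\bar a_0a_0)$ one needs the coefficient $\tfrac12$.

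It remains to verify that $\varphi$ respects all the defining relations of $P^{*}(\Delta)$. The relations not involving $a_0,\bar a_0$ are preserved verbatim; the relation $a_0\bar a_0=0$ holds by the computation above; for $(\bar a_0a_0+y)^{\,e(\Delta)}=0$ (with $e(\Delta)=2m-2$ or $n-3$) and, in the $\bE$ cases, the admissibility relation, every extra term created by $\varphi$ has too high degree in $eP(\Delta)e\cong R(\Delta)$ and hence vanishes; and for the main relation one computes, using $\varphi(f)=f$ because $f$ lies in the socle,
\[
\varphi\bigl(\bar a_0a_0+y+a_{*}\bar a_{*}+f\bigr)=(x-f)+y+a_{*}\bar a_{*}+f=x+y+a_{*}\bar a_{*}=0.
\]
A symmetric computation shows $\psi$ is well defined and that $\psi\varphi$ and $\varphi\psi$ are the identities (one checks $\varphi(w)=w$), so $\varphi$ is an isomorphism and $P^{*}(\Delta)\cong P(\Delta)$. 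Together with the $\bL_n$ case and the reduction of the first paragraph, this proves that $A\cong P(\Delta)$ whenever $\charact K\neq2$, which is the assertion of the proposition.

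The step I expect to be the main obstacle is, for $\Delta=\bE_7$ and $\bE_8$, the identities $f=xw$ and $wx=-f$, i.e.\ pinning down the precise scalar relating $(xy)^{k}$ and $(yx)^{k}$ in the socle of $eP(\bE_n)e$, since here only $y^{3}=0$ rather than $y^{2}=0$ and the relation $(x+y)^{n-3}=0$ sits in much lower degree than $f$; this requires an explicit computation in $R(\bE_n)$, together with the accompanying degree bookkeeping for the error terms, where $f$ is a long word. These verifications are lengthy but routine and entirely in the spirit of the lemmas of Sections~\ref{sec:no-def} and~\ref{sec:only-canoninical}; in the main text they would be carried out type by type.
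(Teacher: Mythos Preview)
Your proposal is correct and follows essentially the same route as the paper: reduce via Propositions~\ref{prop:no-def} and~\ref{prop:iso}, dispose of $\bL_n$ by Theorem~\ref{thm:Ln-clas}(i), and for $\bD_{2m},\bE_7,\bE_8$ construct an explicit isomorphism $P^{*}(\Delta)\cong P(\Delta)$ by perturbing a pair of arrows by $\pm\tfrac12$ times a socle path, the key input being a single skew-commutation identity in $e_\Delta P(\Delta)e_\Delta$. The only cosmetic difference is that the paper treats the three types in separate lemmas and, for $\bE_7$, chooses to perturb $a_2,\bar a_2$ (using the identity $y(xy^2)^2x=-(xy^2)^2xy$) rather than $a_0,\bar a_0$; your uniform choice of $a_0,\bar a_0$ works as well once the corresponding socle identity is verified, exactly the computation you flag as the main obstacle.
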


We recall, that the number of 
``candidates'' for 
algebras socle equivalent but
not isomorphic 
to preprojective algebras of generalized Dynkin type
was reduced by 
Propositions \ref{prop:no-def} and \ref{prop:iso}.
So it is enough to consider 
only the algebras $P^*(\Delta)$
with
$\Delta \in \{ \mathbb{D}_{2m}, \mathbb{E}_7, \mathbb{E}_8, \mathbb{L}_{n} \}$, 
and $m,n \geq 2$.
We recall also that
the algebras $P^*(\bL_n)$ and $P(\bL_n)$ 
are non-isomorphic only in characteristic $2$.
Proposition~\ref{prop:char-2} 
will follow from three lemmas.

\begin{lem}
\label{eq:Dn}
Let $n \geq 4$ be an even integer and 
$K$ 
of characteristic different from $2$.
Then the algebras $P(\mathbb{D}_n)$ and $P^*(\mathbb{D}_n)$ 
over $K$
are isomorphic.
\end{lem}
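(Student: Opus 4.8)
The plan is to construct an explicit isomorphism between $P(\mathbb{D}_n)$ and $P^{*}(\mathbb{D}_n)$ when $\operatorname{char}K\neq 2$. Put $n=2m$ and abbreviate $u=\bar a_0a_0$, $v=\bar a_1a_1$ in the path algebra of $Q_{\mathbb{D}_n}$. Recall that $P^{*}(\mathbb{D}_n)=P^{f}(\mathbb{D}_n)$ for $f$ the coset of $(xy)^{m-1}$; hence the defining relations of $P(\mathbb{D}_n)$ and of $P^{*}(\mathbb{D}_n)$ coincide except at the exceptional vertex $2$, where $P^{*}(\mathbb{D}_n)$ has the two generators $u+v+a_2\bar a_2+(uv)^{m-1}$ and $(u+v)^{n-2}$, whereas $P(\mathbb{D}_n)$ has only $u+v+a_2\bar a_2$. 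The idea is to absorb the extra socle term $(uv)^{m-1}$ by modifying the two arrows $a_2$ and $\bar a_2$, placing half of the correction on each; this equal splitting is exactly what forces $\operatorname{char}K\neq 2$.

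I would first collect the identities needed, all valid in both algebras. From $a_0\bar a_0=0=a_1\bar a_1$ we get $u^2=v^2=0$. As $u+v$ corresponds to $x+y$ under $R(\mathbb{D}_n)\cong e_{\mathbb{D}_n}P(\mathbb{D}_n)e_{\mathbb{D}_n}$ and $(x+y)^{n-2}=0$ in $R(\mathbb{D}_n)$, we have $(u+v)^{n-2}=(uv)^{m-1}+(vu)^{m-1}=0$; moreover $(uv)^{m-1}$ is nonzero, spans the one-dimensional socle of $e_{\mathbb{D}_n}P^{*}(\mathbb{D}_n)e_{\mathbb{D}_n}$, and, lying in the socle of a self-injective algebra, is annihilated on both sides by every arrow. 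Combining the vertex-$2$ relation with $u^2=v^2=0$ yields $u\,a_2\bar a_2=-uv=a_2\bar a_2\,v$, so that, writing $S=(uv)^{m-2}u\,a_2$ and $T=\bar a_2\,v(uv)^{m-2}$,
\[
 S\bar a_2=-(uv)^{m-1}=a_2T, \qquad ST=0 .
\]
Finally, from $\bar a_2(u+v)=-\bar a_2a_2\bar a_2=a_3\bar a_3\bar a_2$ one gets $\bar a_2(u+v)^{k}=(a_3\bar a_3)^{k}\bar a_2$, and iterating the chain relations $\bar a_{j-1}a_{j-1}=-a_j\bar a_j$ along the $\mathbb{A}$-type tail on the vertices $2,3,\dots,n-1$, ending with $\bar a_{n-2}a_{n-2}=0$, shows $(a_3\bar a_3)^{2m-3}=0$ and hence $\bar a_2(u+v)^{2m-3}=0$. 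Since $(uv)^{m-2}u+v(uv)^{m-2}=(u+v)^{2m-3}$, this says $\bar a_2S+Ta_2=\bar a_2(u+v)^{2m-3}a_2=0$. (For $n=4$ the tail is a single arrow, $\bar a_2a_2=0$, and all of this is immediate.)

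Next I would define $\varphi\colon P(\mathbb{D}_n)\to P^{*}(\mathbb{D}_n)$ to be the identity on every arrow except
\[
 \varphi(a_2)=a_2-\tfrac12\,(uv)^{m-2}u\,a_2, \qquad
 \varphi(\bar a_2)=\bar a_2-\tfrac12\,\bar a_2\,v(uv)^{m-2},
\]
and check that $\varphi$ sends every defining relation of $P(\mathbb{D}_n)$ to $0$ in $P^{*}(\mathbb{D}_n)$: one has $\varphi(a_0\bar a_0)=a_0\bar a_0=0$ and $\varphi(a_1\bar a_1)=0$; by the displayed identities $\varphi(u+v+a_2\bar a_2)=u+v+a_2\bar a_2+(uv)^{m-1}=0$ in $P^{*}(\mathbb{D}_n)$; $\varphi(\bar a_2a_2+a_3\bar a_3)=-\tfrac12(\bar a_2S+Ta_2)=0$; and $\bar a_{j-1}a_{j-1}+a_j\bar a_j$ $(j\geq 4)$ and $\bar a_{n-2}a_{n-2}$ involve none of the modified arrows. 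The same formulas with $+\tfrac12$ in place of $-\tfrac12$ define a homomorphism $\psi\colon P^{*}(\mathbb{D}_n)\to P(\mathbb{D}_n)$; here the additional relation $(u+v)^{n-2}$ is sent to $(u+v)^{n-2}$, which is $0$ in $P(\mathbb{D}_n)$ because $(x+y)^{n-2}=0$ in $R(\mathbb{D}_n)$. Because $u(uv)^{m-2}u=0$ (it contains $u^2$), the cross terms disappear from $\psi\varphi$ and $\varphi\psi$, both compositions are the identity, and $\varphi$ is the desired isomorphism.

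The only point requiring real work is the identity $\bar a_2(u+v)^{2m-3}=0$ and the precise socle statements about $(uv)^{m-1}$; these come out of an elementary but slightly tedious induction down the tail of $\mathbb{D}_n$, and everything else is routine. The conceptual reason $\operatorname{char}K\neq 2$ is unavoidable: in the vertex-$2$ relation the corrections to $a_2$ and $\bar a_2$ enter additively, so the sum of their two coefficients is pinned down, while in the vertex-$3$ relation they enter as two socle elements that are negatives of one another, so the two coefficients must be equal; together these force the coefficient $\tfrac12$ on each arrow. In characteristic $2$ no such redistribution exists, in agreement with $P^{*}(\mathbb{D}_n)$ being a proper socle deformation of $P(\mathbb{D}_n)$ there.
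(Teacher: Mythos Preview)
Your proof is correct and follows the same essential strategy as the paper: construct an explicit isomorphism by perturbing one pair of opposite arrows by $\tfrac12$ of a socle correction, using that the relevant socle element can be written as a sum of two ``conjugate'' pieces. The difference is purely in which arrow pair is modified. The paper perturbs $a_0$ and $\bar a_0$; you perturb $a_2$ and $\bar a_2$.

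The paper's choice is slightly more economical: since $a_0,\bar a_0$ touch only vertices $0$ and $2$, the only nontrivial relation to verify is the one at vertex $2$, and the required identity is just $(uv)^{m-1}+(vu)^{m-1}=0$. Your choice forces you to check the vertex-$3$ relation as well, and for that you need the extra vanishing $\bar a_2(u+v)^{2m-3}a_2=0$, which in turn rests on $(a_3\bar a_3)^{2m-3}=0$ obtained by telescoping down the tail. This is valid (and you handle the degenerate case $n=4$ correctly), but it is additional work that the paper avoids. On the other hand, your approach makes somewhat more transparent why the halving is unavoidable: the vertex-$2$ check pins down the \emph{sum} of the two coefficients while the vertex-$3$ check forces them to be \emph{equal}, so each must be $\tfrac12$. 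Both routes lead to the same conclusion.
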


\begin{proof}
Let $\operatorname{char} K \neq 2$ and $n = 2m$ for some integer $m \geq 2$.
Recall that $P^*(\mathbb{D}_n)$ is the bound quiver algebra of the quiver $Q_{\mathbb{D}_n}$
\[
    \xymatrix@C=1pc{
        0 \ar@<.5ex>^{a_0}[rrd] \\
         && 2 \ar@<.5ex>^{\bar{a}_0}[llu] \ar@<.5ex>^{\bar{a}_1}[lld] \ar@<.5ex>^{a_2}[rr] && 3
        \ar@<.5ex>^{\bar{a}_2}[ll]
         \ar@{-}@<.5ex>[r] & \ar@<.5ex>[l] \dots \ar@<.5ex>[r] & \ar@{-}@<.5ex>[l]
         n-2 \ar@<.5ex>^{a_{n-2}}[rr] &&
         n - 1 \ar@<.5ex>^(.5){\bar{a}_{n-2}}[ll] \\
        1 \ar@<.5ex>^{a_1}[rru] \\
    }
\]
bound by the relations:
\begin{gather*}
  a_0 \bar{a}_0 = 0 
  , \
  a_1 \bar{a}_1 = 0
  , \
  \bar{a}_0 a_0 + \bar{a}_1 a_1 +  a_2 \bar{a}_2
  +
 f(\bar{a}_0 a_0, \bar{a}_1 a_1)
   = 0
 ,\\
  \bar{a}_{k-1} a_{k-1} +  a_{k} \bar{a}_{k}
   = 0
  , \mbox{ for } k \in \{ 3, \dots, n-2 \}
  , \ \
  \bar{a}_{n-2} a_{n-2}
   = 0
,\\
  a_0 (\bar{a}_1 a_1 + a_2 \bar{a}_2) = 0
  , \ \
  a_1 (\bar{a}_0 a_0 + a_2 \bar{a}_2) = 0
  , \ \
  \bar{a}_2 (\bar{a}_0 a_0 + \bar{a}_1 a_1 + a_2 \bar{a}_2) = 0
 ,\\
  (\bar{a}_1 a_1 + a_2 \bar{a}_2) \bar{a}_0 = 0
  , \ \
  (\bar{a}_0 a_0 + a_2 \bar{a}_2) \bar{a}_1 = 0
  , \ \
  (\bar{a}_0 a_0 + \bar{a}_1 a_1 + a_2 \bar{a}_2) a_2 = 0
  ,
\end{gather*}
with $f(\bar{a}_0 a_0, \bar{a}_1 a_1) =  (\bar{a}_0 a_0 \bar{a}_1 a_1)^{m-1}$.
We will show that the algebras 
$P(\mathbb{D}_{n})$ and $P^*(\mathbb{D}_n)$ are isomorphic.

Observe that, in the both algebras
$P(\mathbb{D}_n)$ and $P^*(\mathbb{D}_n)$,
we have the equalities
\begin{align*}
 0 &= (-1)^{n-1} (-1)^{\frac{(n-3)(n-2)}{2}-1} a_2 \dots a_{n-2} (\bar{a}_{n-2} a_{n-2}) \bar{a}_{n-2} \dots \bar{a}_2
  \\&
    = (-1)^{n-1} (a_2 \bar{a}_2)^{n-1}
    = (\bar{a}_0 a_0 + \bar{a}_1 a_1)^{n-1}
  \\&
    = \bar{a}_0 a_0 (\bar{a}_0 a_0 + \bar{a}_1 a_1)^{n-2}
      +  \bar{a}_1 a_1 (\bar{a}_0 a_0 + \bar{a}_1 a_1)^{n-2}
  \\&
    = \bar{a}_0 a_0 \bar{a}_1 a_1 (\bar{a}_0 a_0 + \bar{a}_1 a_1)^{n-3}
      + \bar{a}_1 a_1 \bar{a}_0 a_0 (\bar{a}_0 a_0 + \bar{a}_1 a_1)^{n-3}
  \\&
    = \dots
    = (\bar{a}_0 a_0 \bar{a}_1 a_1)^{\frac{n}{2}-1}
      + (\bar{a}_1 a_1 \bar{a}_0 a_0)^{\frac{n}{2}-1}
  \\&
    = (\bar{a}_0 a_0 \bar{a}_1 a_1)^{m-1}
      + (\bar{a}_1 a_1 \bar{a}_0 a_0)^{m-1}
      .
\end{align*}
Hence, there exist the algebra homomorphism
$\varphi : P(\mathbb{D}_n) \to P^*(\mathbb{D}_n)$
given on the arrows by
\begin{gather*}
  \varphi(a_0) = a_0 + \tfrac{1}{2} a_0 (\bar{a}_1 a_1 \bar{a}_0 a_0)^{\frac{n-4}{2}} \bar{a}_1 a_1
  , \ \
  \varphi(\bar{a}_0) = \bar{a}_0 - \tfrac{1}{2} (\bar{a}_1 a_1 \bar{a}_0 a_0)^{\frac{n-4}{2}} \bar{a}_1 a_1 \bar{a}_0
  ,
\\
  \varphi(a_k) = a_k
  , \ \
  \varphi(\bar{a}_l) = \bar{a}_l
\ \ \mbox{ for } \ l = 1,\dots,n-2.
\end{gather*}
We claim that $\varphi$ is well defined.
Indeed, we have the equalities
\begin{align*}
    \varphi(a_0 \bar{a}_0)
    &= \varphi(a_0) \varphi(\bar{a}_0)
    = a_0 \bar{a}_0 +
      \left(\tfrac{1}{2} - \tfrac{1}{2}\right)
      a_0 (\bar{a}_1 a_1 \bar{a}_0 a_0)^{\frac{n-4}{2}} \bar{a}_1 a_1 \bar{a}_0
    \\ &= 0, 
    \\
    \varphi(a_1 \bar{a}_1) &= \varphi(a_1) \varphi(\bar{a}_1) = a_1 \bar{a}_1 = 0, 
    \\
    \varphi(\bar{a}_0 a_0 + \bar{a}_1 a_1 + a_2 \bar{a}_2) &=
    \varphi(\bar{a}_0) \varphi(a_0) + \varphi(\bar{a}_1) \varphi(a_1) + \varphi(a_2) \varphi(\bar{a}_2)
    \\ &=
    \bar{a}_0 a_0
     + \bar{a}_1 a_1 + a_2 \bar{a}_2
   \\ &\ \ \
    + \tfrac{1}{2} \bar{a}_0 a_0 (\bar{a}_1 a_1 \bar{a}_0 a_0)^{\frac{n-4}{2}}\bar{a}_1 a_1
    - \tfrac{1}{2} (\bar{a}_1 a_1 \bar{a}_0 a_0)^{\frac{n-4}{2}}\bar{a}_1 a_1 \bar{a}_0 a_0
    \\ &=
    \bar{a}_0 a_0 + \bar{a}_1 a_1 + a_2 \bar{a}_2
    + (\bar{a}_0 a_0 \bar{a}_1 a_1)^{\frac{n-2}{2}},
    \\
    \varphi(\bar{a}_{l} a_{l} + a_{l+1} \bar{a}_{l+1}) &=
    \varphi(\bar{a}_{l}) \varphi(a_{l}) + \varphi(a_{l+1}) \varphi(\bar{a}_{l+1})
    \\ &=
    \bar{a}_{l} a_{l} + a_{l+1} \bar{a}_{l+1} = 0, \ \ \mbox{ for } \  l = 2,\dots,n-3, \\
    \varphi(\bar{a}_{n-2} a_{n-2}) &= \varphi(\bar{a}_{n-2}) \varphi(a_{n-2}) = \bar{a}_{n-2} a_{n-2} = 0,
\end{align*}
so $\varphi$ is well defined.
Moreover, there exists the algebra homomorphism
$\psi : P^*(\mathbb{D}_n) \to P(\mathbb{D}_n)$
given on the arrows by
$$
  \psi(a_0) = a_0 - \tfrac{1}{2} a_0 (\bar{a}_1 a_1 \bar{a}_0 a_0)^{\frac{n-4}{2}} \bar{a}_1 a_1
  , \ \
  \psi(\bar{a}_0) = \bar{a}_0 + \tfrac{1}{2} (\bar{a}_1 a_1 \bar{a}_0 a_0)^{\frac{n-4}{2}} \bar{a}_1 a_1 \bar{a}_0
  ,
$$ $$
  \psi(a_k) = a_k
  , \ \
  \psi(\bar{a}_l) = \bar{a}_l
\ \ \mbox{ for } \ l = 1,\dots,n-2,
$$
which is the inverse of $\varphi$, and hence $\varphi$ is an isomorphism.
This ends the proof.
\end{proof}

\begin{lemma}
\label{lem:E7-2}
Let $K$ be 
of characteristic different $2$.
Then the algebras $P(\mathbb{E}_7)$ and $P^*(\mathbb{E}_7)$ 
over $K$
are 
isomorphic.
\end{lemma}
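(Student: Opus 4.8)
The plan is to imitate the proof of Lemma~\ref{eq:Dn}: construct a pair of mutually inverse algebra homomorphisms between $P(\mathbb{E}_7)$ and $P^*(\mathbb{E}_7)$ in which the deformation term is absorbed by a change of variables with coefficient $\tfrac12$, available because $\operatorname{char} K \neq 2$. Recall that $P^*(\mathbb{E}_7) = P^f(\mathbb{E}_7)$, where $f$ is the coset of $(xy)^4$ in $R(\mathbb{E}_7) = K\langle x,y\rangle/(x^2,y^3,(x+y)^4)$, and that under $R(\mathbb{E}_7) \cong e_3 P(\mathbb{E}_7) e_3$ one has $x = \bar a_0 a_0$, $y = \bar a_2 a_2$. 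Put $x = \bar a_0 a_0$, $y = \bar a_2 a_2$, $z = a_3\bar a_3$ (cycles at the exceptional vertex $3$); the mesh relations at the ordinary vertices give $x^2 = 0$, $y^3 = 0$ and, as in the proof of Lemma~\ref{lem:E7-1}, $z^4 = 0$, valid in both $P(\mathbb{E}_7)$ and $P^*(\mathbb{E}_7)$, while in $P^*(\mathbb{E}_7)$ the relations at vertex $3$ read $x+y+z = -(xy)^4$ and $(x+y)^4 = 0$. It suffices to build mutually inverse homomorphisms $P(\mathbb{E}_7)\rightleftarrows P^*(\mathbb{E}_7)$.

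The heart of the matter will be to exhibit a homogeneous element $w \in R(\mathbb{E}_7)$ of degree $7$ with $xw - wx = 2(xy)^4$ in $R(\mathbb{E}_7)$; since $\operatorname{char} K \neq 2$ it is enough to find $w_0$ of degree $7$ with $xw_0 - w_0 x = (xy)^4$ and set $w = 2w_0$. This is the degree-$8$ counterpart of the identity $(\bar a_0 a_0 \bar a_1 a_1)^{m-1} + (\bar a_1 a_1 \bar a_0 a_0)^{m-1} = 0$ that drives the $\mathbb{D}_n$ case, and it should be obtained by multiplying the relation $(x+y)^4 = 0$ on both sides by paths of degrees $\le 4$ and reducing with $x^2 = 0$, $y^3 = 0$; the identities recorded in the proof of Lemma~\ref{lem:E7-1}, such as \eqref{eq:E7}, and their left and right multiples are the relevant input. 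A natural first candidate is a small correction of $y(xy)^3$, for which $x\cdot y(xy)^3 = (xy)^4$; one arranges $w$ to lie in the correct class and to be of degree $7$, so that $w^2$ and $wxw$, of degrees $14$ and $15$, automatically vanish in the finite-dimensional algebra $R(\mathbb{E}_7)$. Should a single cycle at vertex $3$ not absorb the whole deformation term, the correction is spread over the arrows $a_1,a_2,a_3,a_4,a_5,\bar a_0$ exactly as in the homomorphism $\varphi$ of the proof of Lemma~\ref{lem:E7-1}, with each coefficient now scaled by $\tfrac12$.

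With $w$ in hand, I would set $\varphi : P(\mathbb{E}_7) \to P^*(\mathbb{E}_7)$ to be $\varphi(a_0) = a_0 + \tfrac12 a_0 w$, $\varphi(\bar a_0) = \bar a_0 - \tfrac12 w\bar a_0$, $\varphi(a_k) = a_k$ and $\varphi(\bar a_l) = \bar a_l$ on the remaining arrows, and verify well-definedness relation by relation. The mesh relations at the vertices $1,2,4,5,6$ are untouched; $\varphi(a_0\bar a_0) = a_0\bar a_0 - \tfrac14 a_0 w^2\bar a_0 = 0$ since $w^2 = 0$; one computes $\varphi(\bar a_0 a_0) = x + \tfrac12(xw - wx) - \tfrac14 wxw = x + (xy)^4$, whence $\varphi\bigl((\bar a_0 a_0 + \bar a_2 a_2)^4\bigr) = \bigl(x + y + (xy)^4\bigr)^4 = 0$ in $P^*(\mathbb{E}_7)$ by admissibility of $f$, and $\varphi(\bar a_0 a_0 + \bar a_2 a_2 + a_3\bar a_3) = (x+y+z) + (xy)^4 = -(xy)^4 + (xy)^4 = 0$. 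Symmetrically, $\psi : P^*(\mathbb{E}_7) \to P(\mathbb{E}_7)$ given by $\psi(a_0) = a_0 - \tfrac12 a_0 w$, $\psi(\bar a_0) = \bar a_0 + \tfrac12 w\bar a_0$ and the identity elsewhere is well defined by the same computation, now using the relation $x+y+z+(xy)^4 = 0$ of $P^*(\mathbb{E}_7)$. As the quadratic terms cancel, $\psi\varphi$ and $\varphi\psi$ are the identity on generators, so $\varphi$ and $\psi$ are mutually inverse isomorphisms and $P(\mathbb{E}_7) \cong P^*(\mathbb{E}_7)$.

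The one genuinely non-formal ingredient, and exactly where $\operatorname{char} K \neq 2$ is used, is the identity $xw - wx = 2(xy)^4$ in $R(\mathbb{E}_7)$, a finite but intricate monomial computation in this local algebra; in characteristic $2$ the factor $2$ vanishes, no such $w$ exists, and $P^*(\mathbb{E}_7)$ is indeed a proper socle deformation of $P(\mathbb{E}_7)$, in accordance with Theorem~\ref{thm1}. I expect pinning down the correct $w$, and checking that the high-degree corrections lie deep enough in the radical for the cross terms and quadratic terms to vanish, to be the main technical burden; it runs entirely parallel to the computations already performed in Lemmas~\ref{lem:E7-1} and~\ref{eq:Dn}.
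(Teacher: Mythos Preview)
Your strategy is the paper's strategy: build mutually inverse homomorphisms by adding a $\tfrac12$–scaled socle correction to one pair of arrows, reducing the question to an anticommutation identity of the form $uw + wu = 0$ (equivalently $uw - wu = 2uw$) in $e_3 P(\mathbb{E}_7) e_3$. The only substantive difference is a choice of which arrow pair to perturb: the paper modifies $a_2,\bar a_2$ rather than $a_0,\bar a_0$, setting
\[
\varphi(a_2)=a_2+\tfrac12\,a_2\,(xy^2)^2x,\qquad
\varphi(\bar a_2)=\bar a_2-\tfrac12\,(xy^2)^2x\,\bar a_2,
\]
and leaving all other arrows fixed. The needed identity is then
\[
y\,(xy^2)^2x \;=\; -\,(xy^2)^2x\,y
\]
(in your notation $x=\bar a_0a_0$, $y=\bar a_2a_2$), and the paper derives it in three lines by repeatedly substituting $(x+y)^4=0$ together with $x^2=0$, $y^3=0$. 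This is precisely the ``genuinely non-formal ingredient'' you flagged but did not supply; with it in hand, all the remaining checks you wrote down (vanishing of $w^2$ and of the cross terms, preservation of the mesh relations at the ordinary vertices, the $(x+y)^4$ relation) go through verbatim.

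So your proposal is not wrong, but it is incomplete exactly where it admits to being incomplete. If you wish to keep the perturbation on $a_0,\bar a_0$ instead of $a_2,\bar a_2$, you must actually exhibit a degree-$7$ element $w$ with $xw+wx=0$ and $xw\neq 0$; since $x^2=0$ kills many candidates, this is less convenient than the paper's choice, where the concrete element $u=(xy^2)^2x$ and the short derivation of $yu=-uy$ are readily available. Either way, the missing step is a specific monomial computation, not a new idea.
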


\begin{proof}
To simplify the notation 
we abbreviate, 
as in the proof of Lemma~\ref{lem:E7-1},
$x = \bar{a}_0 a_0$, 
$y = \bar{a}_2 a_2$ 
and 
$z = a_3 \bar{a}_3$.
We recall (see the proof of Lemma~\ref{lem:E7-1}), that then
in the both algebras
$P(\mathbb{E}_7)$ and $P^{*}(\mathbb{E}_7)$
we have 
the equalities
\[
   x^2 = 0 = y^3
\]
and
\[
  0 = (x+y)^4 = x y^2 x + y x y x + x y x y + y x y^2 + y^2 x y .
\]
Then we derive the equalities
\begin{align*}
  y x y^2 x y^2 x &= - y x y^2 (y x y x + x y x y + y x y^2 + y^2 x y) =
  - y x y^2 x y x y \\
  &= y x (x y^2 x + y x y x + x y x y + y x y^2) x y = y x y x y^2 x y \\
  &= - (x y^2 x + x y x y + y x y^2 + y^2 x y) y^2 x y 
    = - x y^2 x y^2 x y 
  .
\end{align*}
Therefore, restoring the original notation, we obtain the equality
\begin{align}
\tag{*}
\label{e7:*}
   \bar{a}_2 a_2 \left(\bar{a}_0 a_0 (\bar{a}_2 a_2)^2\right)^2 \bar{a}_0 a_0
    = - \left(\bar{a}_0 a_0 (\bar{a}_2 a_2)^2\right)^2 \bar{a}_0 a_0 \bar{a}_2 a_2
    .
\end{align}

Assume now that $\operatorname{char} K \neq 2$.
Let 
$\varphi : P(\mathbb{E}_7) \to P^{*}(\mathbb{E}_7)$ 
be the algebra homomorphism given by
\begin{align*}
  \varphi(a_2) &= a_2 + \tfrac{1}{2} a_2 \left(\bar{a}_0 a_0 (\bar{a}_2 a_2)^2\right)^2 \bar{a}_0 a_0,
\\
  \varphi(\bar{a}_2) &= \bar{a}_2 - \tfrac{1}{2} \left(\bar{a}_0 a_0 (\bar{a}_2 a_2)^2\right)^2 \bar{a}_0 a_0 \bar{a}_2
  , 
\\
  \varphi(a_l) &= a_l 
  , \quad
  \varphi(\bar{a}_l) = \bar{a}_l 
  , \quad \mbox{ for } l \in \{ 0,1,3,4,5 \}
  .
\end{align*}
We prove that $\varphi$ is well defined.
Indeed, we have the equalities
\begin{align*}
    \varphi(a_l \bar{a}_l) 
    &= \varphi(a_l) \varphi(\bar{a}_l) = a_l \bar{a}_l = 0, 
   \quad \mbox{ for } l \in \{ 0,1 \}
    \\
    \varphi(\bar{a}_{1} a_{1} + a_{2} \bar{a}_{2}) &= 
    \varphi(\bar{a}_{1}) \varphi(a_{1}) + \varphi(a_{2}) \varphi(\bar{a}_{2})
    \\ &= 
    \bar{a}_{1} a_{1} + a_{2} \bar{a}_{2} + 
    \left(\tfrac{1}{2}-\tfrac{1}{2}\right) a_2 \left(\bar{a}_0 a_0 (\bar{a}_2 a_2)^2\right)^2 \bar{a}_0 a_0 \bar{a}_2
    \\ &= 
    \bar{a}_{1} a_{1} + a_{2} \bar{a}_{2} = 0  
    , 
    \\
    \varphi(\bar{a}_{3} a_{3} + a_{4} \bar{a}_{4}) &= 
    \varphi(\bar{a}_{3}) \varphi(a_{3}) + \varphi(a_{4}) \varphi(\bar{a}_{4})
    = \bar{a}_{3} a_{3} + a_{4} \bar{a}_{4} = 0, \\
    \varphi(\bar{a}_{4} a_{4} + a_{5} \bar{a}_{5}) &= 
    \varphi(\bar{a}_{4}) \varphi(a_{4}) + \varphi(a_{5}) \varphi(\bar{a}_{5})
    = \bar{a}_{4} a_{4} + a_{5} \bar{a}_{5} = 0, \\
    \varphi(\bar{a}_{5} a_{5}) &= 
    \varphi(\bar{a}_{5}) \varphi(a_{5})
    = \bar{a}_{5} a_{5} = 0 .
\end{align*}
Then, applying  (\ref{e7:*}), we obtain the equalities
\begin{align*}
    \varphi(\bar{a}_{0} a_{0} + \bar{a}_{2} a_{2} + a_{3} \bar{a}_{3}) &= 
    \varphi(\bar{a}_{0}) \varphi(a_{0}) + \varphi(\bar{a}_{2}) \varphi(a_{2}) + \varphi(a_{3}) \varphi(\bar{a}_{3})
    \\ &= 
    \bar{a}_{0} a_{0} + \bar{a}_{2} a_{2} + a_{3} \bar{a}_{3} 
    + \tfrac{1}{2} \bar{a}_2 a_2 \left(\bar{a}_0 a_0 (\bar{a}_2 a_2)^2\right)^2 \bar{a}_0 a_0
    \\  &\ \ \ 
    - \tfrac{1}{2} \left(\bar{a}_0 a_0 (\bar{a}_2 a_2)^2\right)^2 \bar{a}_0 a_0 \bar{a}_2 a_2
    \\ &\stackrel{\!(^*)\!}{=}
    \bar{a}_{0} a_{0} + \bar{a}_{2} a_{2} + a_{3} \bar{a}_{3} 
    +  \bar{a}_2 a_2 \left(\bar{a}_0 a_0 (\bar{a}_2 a_2)^2\right)^2 \bar{a}_0 a_0 
    \\ &= 0 , 
\end{align*}
so $\varphi$ is well defined.

It can be easily seen 
that the
algebra homomorphism
$\psi : P^{*}(\mathbb{E}_7) \to P(\mathbb{E}_7)$ 
given by
\begin{align*}
  \psi(a_2) &= a_2 - \tfrac{1}{2} a_2 \left(\bar{a}_0 a_0 (\bar{a}_2 a_2)^2\right)^2 \bar{a}_0 a_0,
\\
  \psi(\bar{a}_2) &= \bar{a}_2 + \tfrac{1}{2} \left(\bar{a}_0 a_0 (\bar{a}_2 a_2)^2\right)^2 \bar{a}_0 a_0 \bar{a}_2
  , 
\\
  \psi(a_l) &= a_l 
  , \quad
  \psi(\bar{a}_l) = \bar{a}_l 
  , \quad \mbox{ for } l \in \{ 0,1,3,4,5 \}
  ,
\end{align*}
is the inverse of $\varphi$, and hence $\varphi$ is an isomorphism.
This ends the proof.
\end{proof}

\begin{lemma}
\label{lem:E8-2}
Let $K$ be 
of characteristic different $2$.
Then the algebras $P(\mathbb{E}_8)$ and $P^*(\mathbb{E}_8)$ 
over $K$ 
are isomorphic.
\end{lemma}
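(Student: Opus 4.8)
The plan is to mimic the proofs of Lemmas~\ref{eq:Dn} and~\ref{lem:E7-2}: to exhibit a single explicit algebra isomorphism $\varphi\colon P(\mathbb{E}_8)\to P^{*}(\mathbb{E}_8)$ that is the identity on every arrow except $a_2$ and $\bar a_2$, where it adds, respectively subtracts, one half of the path deforming the exceptional relation.

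First I would abbreviate $x=\bar a_0 a_0$, $y=\bar a_2 a_2$, $z=a_3\bar a_3$, exactly as in the proof of Lemma~\ref{lem:E8-1}, and recall that in both $P(\mathbb{E}_8)$ and $P^{*}(\mathbb{E}_8)$ one has $x^{2}=0$, $y^{3}=0$, $z^{5}=0$, hence $(x+y)^{5}=0$, together with the auxiliary identities $(1)$--$(8)$ established there. The core of the argument is an antisymmetry identity for the path $w$ deforming the exceptional relation of $P^{*}(\mathbb{E}_8)$: writing $w=\bar a_2 a_2\cdot v$ for the appropriate loop $v$ at the exceptional vertex (the $\mathbb{E}_8$-analogue of $v=\bigl(\bar a_0 a_0(\bar a_2 a_2)^{2}\bigr)^{2}\bar a_0 a_0$ from Lemma~\ref{lem:E7-2}, determined by the requirement that $\bar a_2 a_2\cdot v$ be the deforming path of $P^{*}(\mathbb{E}_8)$ as in Lemma~\ref{lem:E8-1}), I would prove
\[
  \bar a_2 a_2\cdot v \;=\; -\,v\cdot\bar a_2 a_2 ,
\]
together with $v^{2}=0$ (so that also $v\cdot\bar a_2 a_2\cdot v=0$). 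Just as in Lemma~\ref{lem:E7-2}, where $\bar a_2 a_2\bigl(\bar a_0 a_0(\bar a_2 a_2)^{2}\bigr)^{2}\bar a_0 a_0=-\bigl(\bar a_0 a_0(\bar a_2 a_2)^{2}\bigr)^{2}\bar a_0 a_0\,\bar a_2 a_2$ is obtained by repeatedly substituting $(x+y)^{4}=0$ and discarding terms containing a factor $x^{2}$ or $y^{3}$, here I would run the same ``slide the $\bar a_0 a_0$-factor past the $\bar a_2 a_2$-block'' manipulation through the longer $\mathbb{E}_8$ word, now invoking $(x+y)^{5}=0$ (equivalently the expansion of $z^{5}$) and the identities $(1)$--$(8)$ of Lemma~\ref{lem:E8-1}, which are tailored precisely to this reduction; every error term again contains a factor $x^{2}$ or $y^{3}$ and hence vanishes. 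Note that this step uses no assumption on $\operatorname{char} K$.

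Assuming now $\operatorname{char} K\neq2$, I would set
\[
  \varphi(a_2)=a_2+\tfrac12\,a_2 v,\qquad \varphi(\bar a_2)=\bar a_2-\tfrac12\,v\bar a_2 ,
\]
and $\varphi(a_l)=a_l$, $\varphi(\bar a_l)=\bar a_l$ for all other arrows, and verify that $\varphi$ is well defined. The defining relations of $P(\mathbb{E}_8)$ at the vertices $0,1,4,5,6,7$ involve only arrows fixed by $\varphi$ and are trivially preserved; the relation $\bar a_1 a_1+a_2\bar a_2=0$ at vertex $2$ is preserved because $\varphi(a_2)\varphi(\bar a_2)=a_2\bar a_2$ (the two half-terms cancel and the remaining one carries the factor $v^{2}=0$); and the exceptional relation $\bar a_0 a_0+\bar a_2 a_2+a_3\bar a_3=0$ at vertex $3$ is carried, by the antisymmetry identity exactly as in Lemma~\ref{lem:E7-2} (the two $\tfrac12$'s recombining into one full copy of $\bar a_2 a_2\cdot v$), into the deformed exceptional relation of $P^{*}(\mathbb{E}_8)$. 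Then I would write down the candidate inverse $\psi\colon P^{*}(\mathbb{E}_8)\to P(\mathbb{E}_8)$ obtained by replacing $\tfrac12$ with $-\tfrac12$ and check analogously that it is well defined, in particular that it carries the deformed relation and the extra relation $(\bar a_0 a_0+\bar a_2 a_2)^{5}=0$ of $P^{*}(\mathbb{E}_8)$ back to $0$ (the latter because all the correction terms it produces have degree exceeding that of the socle). Finally $\psi\varphi$ and $\varphi\psi$ act as the identity on arrows, their cross terms again lying above the socle, so $\varphi$ is the desired isomorphism.

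The only genuinely computational step, and hence the main obstacle, is the antisymmetry identity $\bar a_2 a_2\cdot v=-v\cdot\bar a_2 a_2$: the $\mathbb{E}_8$ word is considerably longer than its $\mathbb{E}_7$ counterpart, so one must iterate the reduction through several more steps while carefully tracking which intermediate monomials survive. Once this identity and $v^{2}=0$ are in place, the rest of the proof is bookkeeping strictly parallel to Lemmas~\ref{eq:Dn} and~\ref{lem:E7-2}.
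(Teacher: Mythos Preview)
Your plan is correct and matches the paper's strategy: establish an (anti)commutation identity for the socle word and then use the $\tfrac12$-correction trick, exactly as in Lemmas~\ref{eq:Dn} and~\ref{lem:E7-2}. The paper, however, makes a small tactical choice different from yours: instead of modifying $a_2,\bar a_2$ and proving an identity of the form $yv=-vy$, it modifies $a_0,\bar a_0$ and uses the identity
\[
\bigl((\bar a_2 a_2)^2\bar a_0 a_0\bigr)^2\bar a_2 a_2\bigl(\bar a_0 a_0(\bar a_2 a_2)^2\bigr)^2\bar a_0 a_0
\;=\;
\bar a_0 a_0\bigl((\bar a_2 a_2)^2\bar a_0 a_0\bigr)^2\bar a_2 a_2\bigl(\bar a_0 a_0(\bar a_2 a_2)^2\bigr)^2,
\]
i.e.\ an identity relating $ux$ and $xu$ rather than $yv$ and $vy$. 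Both variants lead to the same endgame, and in both cases the long monomial reduction (via $x^2=0$, $y^3=0$, $(x+y)^5=0$ and the equalities (1)--(8) of Lemma~\ref{lem:E8-1}) is the only real work; the paper, like you, does not spell out this computation in full. Your version has the minor advantage of being literally parallel to the $\mathbb{E}_7$ argument; the paper's choice of $a_0,\bar a_0$ makes the verification at vertex~$2$ trivially empty but shifts the check to vertex~$0$ instead.
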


\begin{proof}
Again, as in the proof of Lemma~\ref{lem:E8-1},
to simplify the notation, 
we abbreviate
$x = \bar{a}_0 a_0$, $y = \bar{a}_2 a_2$ and $z = a_3 \bar{a}_3$.
Then, in the both algebras
$P(\mathbb{E}_7)$ and $P^{*}(\mathbb{E}_7)$,
we have
the equalities
(see the proof of Lemma~\ref{lem:E8-1})
\[
   x^2 = 0 = y^3 ,
\]
and
\[
   0 = - (x+y)^5 = (xy)^2x + (xy)^2y + xy^2xy +(yx)^2y + yxy^2x + y^2xyx + y^2xy^2 .
\]
From the above equalities, one may derive the equality
\[
  \left(y^2x\right)^2y\left(xy^2\right)^2x
  =
  x\left(y^2x\right)^2y\left(xy^2\right)^2
  .
\]
We note that the corresponding calculations are similar to the
calculations from the proofs of 
Lemmas \ref{lem:E7-1}, \ref{lem:E8-1}, and \ref{lem:E7-2},
but are considerably longer, so we omit the details.

Hence, after restoring the original notation, we obtain
\begin{align*}
\tag{*}
\label{e8:*}
  \left((\bar{a}_2 a_2)^2 \bar{a}_0 a_0\right)^2 \bar{a}_2 a_2 
 & 
  \left(\bar{a}_0 a_0 (\bar{a}_2 a_2)^2\right)^2  \bar{a}_0 a_0 
\\ &
  =
  \bar{a}_0 a_0 \left((\bar{a}_2 a_2)^2 \bar{a}_0 a_0\right)^2 \bar{a}_2 a_2 \left(\bar{a}_0 a_0 (\bar{a}_2 a_2)^2\right)^2 
 .
\end{align*}

Assume now that $\operatorname{char} K \neq 2$.
Let 
$\varphi : P(\mathbb{E}_8) \to P^{*}(\mathbb{E}_8)$ 
be the algebra homomorphism given by
\begin{align*}
  \varphi(a_0) &= a_0 - \tfrac{1}{2} a_0 \left((\bar{a}_2 a_2)^2 \bar{a}_0 a_0\right)^2 \bar{a}_2 a_2 \left(\bar{a}_0 a_0 (\bar{a}_2 a_2)^2\right)^2,
\\
  \varphi(\bar{a}_0) &= \bar{a}_0 + \tfrac{1}{2} \left((\bar{a}_2 a_2)^2 \bar{a}_0 a_0\right)^2 \bar{a}_2 a_2 \left(\bar{a}_0 a_0 (\bar{a}_2 a_2)^2\right)^2 \bar{a}_0
  , 
\\
  \varphi(a_l) &= a_l 
  , \quad
  \varphi(\bar{a}_l) = \bar{a}_l 
  , \quad \mbox{ for } l \in \{ 1,\dots, 6 \}
  .
\end{align*}
We claim that $\varphi$ is well defined.
Indeed, we have the equalities
\begin{align*}
    \varphi(a_0 \bar{a}_0) 
    &= \varphi(a_0) \varphi(\bar{a}_0) 
 \\&   
    = a_0 \bar{a}_0 
    + \left(\tfrac{1}{2}-\tfrac{1}{2}\right)
    a_0 \left((\bar{a}_2 a_2)^2 \bar{a}_0 a_0\right)^2 \bar{a}_2 a_2 \left(\bar{a}_0 a_0 (\bar{a}_2 a_2)^2\right)^2 \bar{a}_0
 \\&   
    = 0, 
    \\
    \varphi(a_1 \bar{a}_1) 
    &= \varphi(a_1) \varphi(\bar{a}_1) = a_1 \bar{a}_1 = 0, 
    \\
    \varphi(\bar{a}_{k-1} a_{k-1} + a_{k} \bar{a}_{k}) &= 
    \varphi(\bar{a}_{k-1}) \varphi(a_{k-1}) + \varphi(a_{k}) \varphi(\bar{a}_{k})
 \\&   
    = \bar{a}_{k-1} a_{k-1} + a_{k} \bar{a}_{k} 
    = 0, 
    \mbox{ for } k \in \{2,4,5,6\},
    \\
    \varphi(\bar{a}_{6} a_{6}) &= 
    \varphi(\bar{a}_{6}) \varphi(a_{6})
    = \bar{a}_{6} a_{6} = 0 ,
\end{align*}
and applying (\ref{e8:*}), we obtain
\begin{align*}
    \varphi(\bar{a}_{0} a_{0} + \bar{a}_{2} a_{2} + a_{3} \bar{a}_{3}) &= 
    \varphi(\bar{a}_{0}) \varphi(a_{0}) + \varphi(\bar{a}_{2}) \varphi(a_{2}) + \varphi(a_{3}) \varphi(\bar{a}_{3})
    \\ &= 
    \bar{a}_{0} a_{0} + \bar{a}_{2} a_{2} + a_{3} \bar{a}_{3} 
    \\  &\ \ \ 
    + \tfrac{1}{2} 
    \left((\bar{a}_2 a_2)^2 \bar{a}_0 a_0\right)^2 \bar{a}_2 a_2 \left(\bar{a}_0 a_0 (\bar{a}_2 a_2)^2\right)^2 \bar{a}_0 a_0
    \\  &\ \ \ 
    - t{1}{2} 
    \bar{a}_0 a_0 \left((\bar{a}_2 a_2)^2 \bar{a}_0 a_0\right)^2 \bar{a}_2 a_2 \left(\bar{a}_0 a_0 (\bar{a}_2 a_2)^2\right)^2
    \\ &\stackrel{\!(^*)\!}{=}
    \bar{a}_{0} a_{0} + \bar{a}_{2} a_{2} + a_{3} \bar{a}_{3} 
    \\  &\ \ \ 
    +  
    \left((\bar{a}_2 a_2)^2 \bar{a}_0 a_0\right)^2 \bar{a}_2 a_2 \left(\bar{a}_0 a_0 (\bar{a}_2 a_2)^2\right)^2 \bar{a}_0 a_0
    \\ &= 0 , 
\end{align*}
so $\varphi$ is well defined.
Similarly, one may prove that there exists an algebra homomorphism
$\psi : P^{*}(\mathbb{E}_8) \to P(\mathbb{E}_8)$ 
given by
\begin{align*}
  \psi(a_0) &= a_0 + \tfrac{1}{2} a_0 \left((\bar{a}_2 a_2)^2 \bar{a}_0 a_0\right)^2 \bar{a}_2 a_2 \left(\bar{a}_0 a_0 (\bar{a}_2 a_2)^2\right)^2,
\\
  \psi(\bar{a}_0) &= \bar{a}_0 - \tfrac{1}{2} \left((\bar{a}_2 a_2)^2 \bar{a}_0 a_0\right)^2 \bar{a}_2 a_2 \left(\bar{a}_0 a_0 (\bar{a}_2 a_2)^2\right)^2 \bar{a}_0
  , 
\\
  \psi(a_l) &= a_l 
  , \quad
  \psi(\bar{a}_l) = \bar{a}_l 
  , \quad \mbox{ for } l \in \{ 1,\dots, 6 \}
  ,
\end{align*}
which is the inverse of $\varphi$, and hence $\varphi$ is an isomorphism.
Therefore, the algebras $P(\mathbb{E}_8)$ and $P^{*}(\mathbb{E}_8)$ are isomorphic.
This ends the proof.
\end{proof}

\section{Symmetricity of socle deformed preprojective algebras}
\label{sec:non-sym}

The symmetricity of the preprojective algebras
of Dynkin types was studied by 
S.~Brenner, M.C.R.~Butler and S.~King in \cite{BBK}.
In particular, we have the following
consequence of \cite[Theorem~4.8]{BBK}.

\begin{thm}
\label{th:bbk}
The preprojective algebra $P(\Delta)$ of a Dynkin type $\Delta$
over $K$
different from $\mathbb{A}_1$
is symmetric if and only if 
$\charact K = 2$ 
and $\Delta$ 
is one of the types 
$\bD_{2n}$, $m \geq 2$, $\bE_{7}$, or $\bE_{8}$. 
\end{thm}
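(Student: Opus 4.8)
The plan is to deduce the statement from \cite[Theorem~4.8]{BBK} together with the classical description of the Nakayama permutation of a preprojective algebra of Dynkin type. First I would record the two general facts on which everything rests: for every Dynkin graph $\Delta$ the preprojective algebra $P(\Delta)$ is a finite-dimensional self-injective $K$-algebra, and a self-injective algebra can be symmetric only if it is weakly symmetric, i.e.\ only if its Nakayama permutation is the identity.

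Next I would recall that the Nakayama permutation of $P(\Delta)$ is induced by an automorphism of the graph $\Delta$: it is the identity precisely for $\Delta \in \{\mathbb{A}_1, \mathbb{D}_{2m}, \mathbb{E}_7, \mathbb{E}_8\}$, and it is the unique non-trivial graph automorphism of $\Delta$ for $\Delta \in \{\mathbb{A}_n\ (n \geq 2), \mathbb{D}_{2m+1}, \mathbb{E}_6\}$; this is part of \cite{BBK} and also follows from the action of the Coxeter transformation on the vertices. Consequently, for a Dynkin type $\Delta \notin \{\mathbb{A}_1, \mathbb{D}_{2m}, \mathbb{E}_7, \mathbb{E}_8\}$ the algebra $P(\Delta)$ is not weakly symmetric, hence not symmetric; and since for such $\Delta$ the type is in any case not one of $\mathbb{D}_{2m}, \mathbb{E}_7, \mathbb{E}_8$, the asserted equivalence holds vacuously (both sides being false), independently of $\charact K$.

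It then remains to treat the weakly symmetric types $\Delta \in \{\mathbb{D}_{2m}\ (m \geq 2), \mathbb{E}_7, \mathbb{E}_8\}$, the remaining weakly symmetric Dynkin type $\mathbb{A}_1$ being excluded from the statement since $P(\mathbb{A}_1) \cong K$ is symmetric in every characteristic. For these $\Delta$, \cite[Theorem~4.8]{BBK} provides an explicit symmetrizing bilinear form, respectively an explicit description of the Nakayama automorphism of $P(\Delta)$ (relative to the presentation of $P(\Delta)$ fixed in Section~\ref{sec:1}), from which one reads off that $P(\Delta)$ is symmetric exactly when $\charact K = 2$: in characteristic $2$ the relevant signs vanish and the Nakayama automorphism is inner (in fact trivial), whereas in characteristic different from $2$ it is a genuine non-inner automorphism which still fixes all isomorphism classes of simple modules. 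Combining the three cases gives the theorem.

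I do not anticipate a real obstacle, since the result is essentially a specialization of \cite[Theorem~4.8]{BBK}; the only points demanding care are to match the normalization of the defining relations of $P(\Delta)$ used in \cite{BBK} with the one adopted here, and to make sure that for the non-weakly-symmetric Dynkin types the equivalence is indeed vacuously true, so that no hidden characteristic restriction is missed there.
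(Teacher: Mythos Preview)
Your proposal is correct and is essentially a fleshed-out version of what the paper does: the paper states this theorem as a direct consequence of \cite[Theorem~4.8]{BBK} and gives no further argument, whereas you spell out the intermediate step of first reducing to the weakly symmetric types via the Nakayama permutation and then invoking \cite{BBK} for those. There is no substantive difference in approach, only in level of detail.
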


As a consequence of Theorem \ref{th:bbk}
we obtain the following 
lemma.

\begin{lem}
\label{lem:weaklysym}
Let 
$K$ be a field of characteristic $2$,
and $\Delta$ be one of the Dynkin types 
$\mathbb{D}_{2m}$, $m \geq 2$, $\mathbb{E}_7$, $\mathbb{E}_8$.
Then the algebra $P^*(\Delta)$
is weakly symmetric.
\end{lem}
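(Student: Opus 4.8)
The plan is to exploit the fact that for a basic self-injective algebra $A = KQ/I$, weak symmetry amounts to the statement that the Nakayama permutation of $A$ is the identity, equivalently that for every primitive idempotent $e_i$ the simple top $S_i$ of $P_i = e_iA$ is isomorphic to the simple socle of $P_i$; in quiver terms, the unique (up to scalar) maximal nonzero path starting at vertex $i$ also ends at vertex $i$. First I would recall from Theorem~\ref{th:bbk} that, under the hypothesis $\charact K = 2$ and $\Delta \in \{\mathbb{D}_{2m}, \mathbb{E}_7, \mathbb{E}_8\}$, the preprojective algebra $P(\Delta)$ is symmetric, hence in particular weakly symmetric. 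Thus for $P(\Delta)$ the Nakayama permutation is trivial: for each vertex $i$ the socle of the indecomposable projective $P(\Delta)$-module $P_i$ is $S_i$.

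Next I would transfer this information to $P^*(\Delta)$ via the socle-equivalence $P^*(\Delta)/\soc(P^*(\Delta)) \cong P(\Delta)/\soc(P(\Delta))$, which holds because $P^*(\Delta)$ is by construction a socle deformation of $P(\Delta)$ (Section~\ref{sec:preliminaries}; the defining deformation element lies in the socle-theoretic part of the relations, so the two algebras agree modulo their socles). The key point is that weak symmetry is detected on $A/\soc(A)$ together with the one-dimensionality of each $e_i\soc(A)$: explicitly, for a basic self-injective algebra the socle $e_i\soc(A)e_j$ is one-dimensional exactly when $j$ is the Nakayama image of $i$, and which pair $(i,j)$ this is can be read off from the radical series of $P_i$ in $A/\soc(A)$ — the bottom nonzero layer of $\rad^{\,k}P_i/\rad^{\,k+1}P_i$ sits at vertex $j$. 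Since $P^*(\Delta)$ and $P(\Delta)$ have isomorphic quotients by their socles, the vertex $j$ attached to each $i$ is the same for both algebras; as it is $j=i$ for $P(\Delta)$, it is $j=i$ for $P^*(\Delta)$ as well. Concretely, one checks that a maximal nonzero path in $P^*(\Delta)$ starting at $i$ reduces modulo socle to a maximal nonzero path in $P(\Delta)/\soc P(\Delta)$ starting at $i$, whose target is $i$; lifting back, the target in $P^*(\Delta)$ is still $i$. Hence $\soc(P_i) \cong S_i$ for every $i$, i.e. $P^*(\Delta)$ is weakly symmetric.

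The main obstacle is making precise and rigorous the claim that socle equivalence preserves the Nakayama permutation. One must argue that for a basic indecomposable self-injective algebra $A$ the socle $\soc(A_A)$ is a direct sum $\bigoplus_i e_{\nu(i)}\soc(A)e_i$ with each summand one-dimensional and $\nu$ a permutation, and that the assignment $i \mapsto \nu(i)$ is already determined by the algebra $A/\soc(A)$ (through the socle series of its indecomposable projectives, or equivalently the "longest path" combinatorics in the bound quiver). This is where one uses that both $P(\Delta)$ and $P^*(\Delta)$ are self-injective with the \emph{same} quiver and with presentations that coincide outside the socle, so that the length and target of the maximal paths are unchanged. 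Once this bookkeeping is done, the lemma follows immediately from Theorem~\ref{th:bbk}. The calculational input needed — that the longest nonzero path at each vertex of $Q_\Delta$ in $P^*(\Delta)$ returns to that vertex — can be imported verbatim from the structural description of $P(\Delta)$, since modulo the socle the two algebras are literally the same; no new path combinatorics is required beyond what is already recorded for the preprojective algebras themselves.
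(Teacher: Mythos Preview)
Your proposal is correct and follows essentially the same approach as the paper: invoke Theorem~\ref{th:bbk} to get that $P(\Delta)$ is symmetric (hence weakly symmetric), and then use the socle equivalence between $P(\Delta)$ and $P^*(\Delta)$ to conclude that $P^*(\Delta)$ is weakly symmetric as well. The paper's proof is in fact much terser---it simply asserts that weak symmetry passes along the socle equivalence---so your detailed justification of why the Nakayama permutation is determined by $A/\soc(A)$ is a welcome elaboration of a step the paper leaves implicit.
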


\begin{proof}
Let 
$K$ be a field of characteristic $2$,
and $\Delta$ be one of the types 
$\mathbb{D}_{2m}$, $m \geq 2$, $\mathbb{E}_7$, $\mathbb{E}_8$.
Then, it follows from
Theorem~\ref{th:bbk} that
the algebra $P(\Delta)$ is symmetric,
and hence also weakly symmetric.
But then the algebra $P^*(\Delta)$
is also weakly symmetric.
\end{proof}

Our next aim is to
prove that
in characteristic $2$
the algebras 
$P^{*}(\bD_{2m})$, $m \geq 2$,
$P^{*}(\bE_7)$,
$P^{*}(\bE_8)$
are 
not symmetric.

\begin{lem}
\label{lem:nonsymDn}
Let $n \geq 4$ be an even integer and 
$K$ be
 of characteristic $2$.
Then 
$P^*(\mathbb{D}_n)$
is 
not symmetric.
\end{lem}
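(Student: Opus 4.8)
The plan is to exhibit a \emph{nonzero} element that lies simultaneously in $\soc(\Lambda)$ and in the commutator subspace $[\Lambda,\Lambda]$ (the $K$-span of all $uv-vu$), which is impossible for a symmetric algebra. Here $\Lambda=P^{*}(\mathbb{D}_n)$ with $n=2m$ and $\charact K=2$, and the element is $f:=(\bar{a}_0a_0\bar{a}_1a_1)^{m-1}$ — the deformation term in the mesh relation at the exceptional vertex $2$ — so that $f=-(\bar{a}_0a_0+\bar{a}_1a_1+a_2\bar{a}_2)$ holds in $\Lambda$.

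First I would record the obstruction used: if $\varphi\colon\Lambda\to K$ were a symmetrizing form, then $\varphi$ vanishes on $[\Lambda,\Lambda]$, whereas for every $0\neq s\in\soc(\Lambda)$ one has $s\Lambda=Ks$ (since $s\cdot\rad\Lambda=0$), so nondegeneracy of $(u,v)\mapsto\varphi(uv)$ forces $\varphi(s)\neq0$; hence $\soc(\Lambda)\cap[\Lambda,\Lambda]=0$ whenever $\Lambda$ is symmetric. (Alternatively one quotes that $\Lambda$ is weakly symmetric by Lemma~\ref{lem:weaklysym} and works inside the one-dimensional space $e_2\soc(\Lambda)e_2$.) Then three points remain. (1)~$f\neq0$ in $\Lambda$: this is the only non-formal step, and it uses structural input from \cite{BES1} — namely $e_2\Lambda e_2\cong R(\mathbb{D}_n)$, carrying $f$ to the generator $(xy)^{m-1}$ of $\soc R(\mathbb{D}_n)$, which is nonzero; equivalently, if $f=0$ then $\Lambda$ would satisfy all defining relations of $P(\mathbb{D}_n)$ and, as $\dim\Lambda=\dim P(\mathbb{D}_n)$, this would force $\Lambda=P(\mathbb{D}_n)$, contradicting $(xy)^{m-1}\neq0$ in $R(\mathbb{D}_n)$.

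(2)~$f\in\soc(\Lambda)$: from $f=-(\bar{a}_0a_0+\bar{a}_1a_1+a_2\bar{a}_2)$ together with $a_0\bar{a}_0=0$, $a_1\bar{a}_1=0$ and the relations $(\bar{a}_1a_1+a_2\bar{a}_2)\bar{a}_0=0$, $(\bar{a}_0a_0+a_2\bar{a}_2)\bar{a}_1=0$, $(\bar{a}_0a_0+\bar{a}_1a_1+a_2\bar{a}_2)a_2=0$, one checks directly $f\bar{a}_0=f\bar{a}_1=fa_2=0$; since $\bar{a}_0,\bar{a}_1,a_2$ are all the arrows leaving vertex $2$ and $f=e_2fe_2$, this gives $f\cdot\rad\Lambda=0$. (3)~$f\in[\Lambda,\Lambda]$: modulo $[\Lambda,\Lambda]$ a cyclic path agrees with each of its rotations, so $\bar{a}_0a_0\equiv a_0\bar{a}_0=0$, $\bar{a}_1a_1\equiv a_1\bar{a}_1=0$, and $a_2\bar{a}_2\equiv\bar{a}_2a_2$, whence $f\equiv-\bar{a}_2a_2$. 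Running along the tail, the relations $\bar{a}_{k-1}a_{k-1}+a_k\bar{a}_k=0$ for $3\le k\le n-2$ give $[\bar{a}_{k-1}a_{k-1}]=-[a_k\bar{a}_k]=-[\bar{a}_ka_k]$ in $\Lambda/[\Lambda,\Lambda]$, so these classes alternate in sign; because $n$ is even and $\bar{a}_{n-2}a_{n-2}=0$ at the end vertex $n-1$, this yields $[\bar{a}_2a_2]=[\bar{a}_{n-2}a_{n-2}]=0$, hence $f\equiv0$.

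Combining (1)--(3), $0\neq f\in\soc(\Lambda)\cap[\Lambda,\Lambda]$, so $\Lambda$ is not symmetric. The expected main obstacle is step~(1), the nonvanishing of the deformation term; the rest is routine manipulation of the defining relations. I would also point out that this argument isolates the role of the socle deformation: in $P(\mathbb{D}_{2m})$ itself the element $(xy)^{m-1}$ generates the socle at vertex $2$ but is \emph{not} a commutator — which, in characteristic $2$, is exactly why $P(\mathbb{D}_{2m})$ is symmetric — whereas in $P^{*}(\mathbb{D}_{2m})$ it is forced to equal $-(\bar{a}_0a_0+\bar{a}_1a_1+a_2\bar{a}_2)$, a sum of commutators.
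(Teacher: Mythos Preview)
Your proof is correct and is essentially the same argument as the paper's, just phrased in the language of the commutator subspace rather than working directly with a hypothetical symmetrizing form $\psi$. The paper assumes $\psi$ exists, uses $\psi(ab)=\psi(ba)$ to compute $\psi(\bar a_0a_0)=\psi(\bar a_1a_1)=\psi(a_2\bar a_2)=0$ via the same chain of relations along the tail, and concludes $\psi(f)=0$ from the mesh relation at vertex~$2$, contradicting $0\neq f\in\soc$; your steps (2)--(3) are exactly this computation repackaged as $f\in\soc(\Lambda)\cap[\Lambda,\Lambda]$. Two minor remarks: your step~(1) supplies justification for $f\neq 0$ that the paper leaves implicit, and the clause ``because $n$ is even'' in step~(3) is unnecessary since $\charact K=2$ kills all signs (and in any case the chain terminates at $\bar a_{n-2}a_{n-2}=0$ regardless of parity).
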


\begin{proof}
Recall that
$P^{*}(\mathbb{D}_n)$ 
is the bound quiver algebra of the quiver $Q_{\mathbb{D}_n}$
\[
    \xymatrix@C=1pc{
        0 \ar@<.5ex>^{a_0}[rrd] \\
        && 2 \ar@<.5ex>^{\bar{a}_0}[llu] \ar@<.5ex>^{\bar{a}_1}[lld] \ar@<.5ex>^{a_2}[rr] && 3
        \ar@<.5ex>^{\bar{a}_2}[ll]
        \ar@{-}@<.5ex>[r] & \ar@<.5ex>[l] \dots \ar@<.5ex>[r] & \ar@{-}@<.5ex>[l]
        n-2 \ar@<.5ex>^{a_{n-2}}[rr] &&
        n - 1 \ar@<.5ex>^(.5){\bar{a}_{n-2}}[ll] \\
        1 \ar@<.5ex>^{a_1}[rru] \\
    }
\]
bound by the relations:
    \begin{gather*}
        a_0 \bar{a}_0 = 0 
        , \
        a_1 \bar{a}_1 = 0
        , \
        \bar{a}_0 a_0 + \bar{a}_1 a_1 +  a_2 \bar{a}_2
        +
        f(\bar{a}_0 a_0, \bar{a}_1 a_1)
        = 0
        ,\\
        \bar{a}_{k-1} a_{k-1} +  a_{k} \bar{a}_{k}
        = 0
        , \mbox{ for } k \in \{ 3, \dots, n-2 \}
        , \ \
        \bar{a}_{n-2} a_{n-2}
        = 0
        ,\\
        a_0 (\bar{a}_1 a_1 + a_2 \bar{a}_2) = 0
        , \ \
        a_1 (\bar{a}_0 a_0 + a_2 \bar{a}_2) = 0
        , \ \
        \bar{a}_2 (\bar{a}_0 a_0 + \bar{a}_1 a_1 + a_2 \bar{a}_2) = 0
        ,\\
        (\bar{a}_1 a_1 + a_2 \bar{a}_2) \bar{a}_0 = 0
        , \ \
        (\bar{a}_0 a_0 + a_2 \bar{a}_2) \bar{a}_1 = 0
        , \ \
        (\bar{a}_0 a_0 + \bar{a}_1 a_1 + a_2 \bar{a}_2) a_2 = 0
          ,
    \end{gather*}
    with $f(\bar{a}_0 a_0, \bar{a}_1 a_1) = (\bar{a}_0 a_0 \bar{a}_1 a_1)^{\frac{n}{2}-1}$.

    Assume for a contradiction, that $P^{*}(\mathbb{D}_n)$ is symmetric.
    Then there exists a  symmetrizing form 
    $\psi : P^{*}(\mathbb{D}_n) \to K$
    (with $\Ker \psi$ without non-zero one-side ideals, 
    and $\psi(ab) = \psi(ba)$
    for all $a,b \in P^{*}(\mathbb{D}_n)$).
    Then $\psi\big(f(\bar{a}_0 a_0, \bar{a}_1 a_1)\big) \neq 0$, because
    $f(\bar{a}_0 a_0, \bar{a}_1 a_1)$ is
    an element from $\soc (P^{*}(\mathbb{D}_n))$.
    On the other hand, we have
    \begin{align*}
        \psi(\bar{a}_0 a_0) = \psi(a_0 \bar{a}_0) = \psi(0) = 0,
        \\
        \psi(\bar{a}_1 a_1) = \psi(a_1 \bar{a}_1 ) = \psi(0) = 0.
    \end{align*}
    Similarly we obtain
    \[
    \psi(a_2 \bar{a}_2 ) = \psi(\bar{a}_{n-1} a_{n-1}) =  \psi(0) = 0,
    \]
    because
    $\psi(a_i \bar{a}_i ) = \psi(\bar{a}_i a_i)$, for $i \in \{2,\dots,n-1\}$,
    and
    $\bar{a}_i a_i = a_{i+1} \bar{a}_{i+1}$, for $i \in \{2,\dots,n-2\}$.
    Hence
    \begin{align*}
        \psi\big(f(\bar{a}_0 a_0, \bar{a}_1 a_1)\big) &=
        \psi(\bar{a}_0 a_0)
        + \psi(\bar{a}_1 a_1)
        + \psi(a_2 \bar{a}_2)
        + \psi\big(f(\bar{a}_0 a_0, \bar{a}_1 a_1)\big)
        \\&=
        \psi\big(
        \bar{a}_0 a_0
        + \bar{a}_1 a_1
        + a_2 \bar{a}_2
        + f(\bar{a}_0 a_0, \bar{a}_1 a_1)
        \big)
        = \psi(0) = 0,
    \end{align*}
    a contradiction.
\end{proof}

\begin{lem}
\label{lem:nonsymEn}
Let $n \in \{ 7,8\}$ and 
$K$ be 
of characteristic $2$.
Then $P^*(\mathbb{E}_n)$
is 
not symmetric.
\end{lem}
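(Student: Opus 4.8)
The plan is to argue exactly as in Lemma~\ref{lem:nonsymDn}, with the roles of the vertices $0,1,2$ there played here by the vertices $0,2,3$. Recall from Section~\ref{sec:preliminaries} that $P^{*}(\mathbb{E}_n)$ is the bound quiver algebra of $Q_{\mathbb{E}_n}$ whose defining relations comprise, at the ordinary vertices, $a_0\bar{a}_0 = 0$, $a_1\bar{a}_1 = 0$, $\bar{a}_1 a_1 + a_2\bar{a}_2 = 0$, $\bar{a}_{k-1}a_{k-1} + a_k\bar{a}_k = 0$ for $k \in \{4,\dots,n-2\}$ and $\bar{a}_{n-2}a_{n-2} = 0$, together with, at the exceptional vertex $3$,
\[
  \bar{a}_0 a_0 + \bar{a}_2 a_2 + a_3\bar{a}_3 + f(\bar{a}_0 a_0, \bar{a}_2 a_2) = 0
\]
and $(\bar{a}_0 a_0 + \bar{a}_2 a_2)^{n-3} = 0$, where $f$ is the chosen admissible element from Section~\ref{sec:preliminaries}. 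Via the identification $R(\mathbb{E}_n) \cong e_{\mathbb{E}_n}P^{*}(\mathbb{E}_n)e_{\mathbb{E}_n}$ the evaluated element $f(\bar{a}_0 a_0, \bar{a}_2 a_2)$ is a nonzero element of $\soc(P^{*}(\mathbb{E}_n))$, spanning a one-dimensional two-sided ideal.

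First I would assume, for a contradiction, that $P^{*}(\mathbb{E}_n)$ is symmetric and fix a symmetrizing form $\psi \colon P^{*}(\mathbb{E}_n) \to K$, so that $\psi(ab) = \psi(ba)$ for all $a,b$ and $\Ker\psi$ contains no nonzero one-sided ideal. Since $f(\bar{a}_0 a_0, \bar{a}_2 a_2)$ spans a nonzero two-sided ideal, the latter property forces $\psi\bigl(f(\bar{a}_0 a_0, \bar{a}_2 a_2)\bigr) \neq 0$. Next I would use only the cyclicity of $\psi$ and the ordinary vertex relations to show that $\psi$ vanishes on $\bar{a}_0 a_0$, on $\bar{a}_2 a_2$ and on $a_3\bar{a}_3$: indeed $\psi(\bar{a}_0 a_0) = \psi(a_0\bar{a}_0) = 0$; then $\psi(\bar{a}_2 a_2) = \psi(a_2\bar{a}_2) = -\psi(\bar{a}_1 a_1) = -\psi(a_1\bar{a}_1) = 0$; and finally, chaining the relations $\bar{a}_{k-1}a_{k-1} = -a_k\bar{a}_k$ with cyclicity,
\[
  \psi(a_3\bar{a}_3) = \psi(\bar{a}_3 a_3) = -\psi(a_4\bar{a}_4) = \dots = \pm\,\psi(\bar{a}_{n-2}a_{n-2}) = 0
\]
(in characteristic $2$ the signs are immaterial). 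Applying $\psi$ to the exceptional relation then gives
\[
  0 = \psi(\bar{a}_0 a_0) + \psi(\bar{a}_2 a_2) + \psi(a_3\bar{a}_3) + \psi\bigl(f(\bar{a}_0 a_0, \bar{a}_2 a_2)\bigr) = \psi\bigl(f(\bar{a}_0 a_0, \bar{a}_2 a_2)\bigr) \neq 0,
\]
a contradiction, whence $P^{*}(\mathbb{E}_n)$ is not symmetric.

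The only point I expect to require care is the claim that $f(\bar{a}_0 a_0, \bar{a}_2 a_2)$ is genuinely a nonzero element of $\soc(P^{*}(\mathbb{E}_n))$, i.e.\ that the deformation defining $P^{*}(\mathbb{E}_n)$ is proper; this is settled exactly as in the $\mathbb{D}_n$ case of Lemma~\ref{lem:nonsymDn}, by checking that the relevant power of $xy$ is a nonzero socle generator of $R(\mathbb{E}_n)$ and transporting it along $R(\mathbb{E}_n) \cong e_{\mathbb{E}_n}P^{*}(\mathbb{E}_n)e_{\mathbb{E}_n}$. All the remaining steps are routine bookkeeping with the cyclic symmetry of $\psi$; in contrast with the isomorphism arguments of Section~\ref{sec:only-canoninical}, no identities among long paths in $P^{*}(\mathbb{E}_n)$ are needed, so the verification stays short.
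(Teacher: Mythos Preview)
Your proposal is correct and follows essentially the same argument as the paper's own proof: assume a symmetrizing form $\psi$ exists, use cyclicity of $\psi$ together with the ordinary vertex relations to show $\psi(\bar{a}_0 a_0)=\psi(\bar{a}_2 a_2)=\psi(a_3\bar{a}_3)=0$, and then apply $\psi$ to the exceptional relation to contradict $\psi\bigl(f(\bar{a}_0 a_0,\bar{a}_2 a_2)\bigr)\neq 0$. Your added remark about why $f(\bar{a}_0 a_0,\bar{a}_2 a_2)$ is a nonzero socle element is the only elaboration beyond the paper, and it is harmless.
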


\begin{proof}
Let $n \in \{ 7,8\}$.
    Recall that
    $P^{*}(\mathbb{E}_n) = P^{f}(\mathbb{E}_n)$
    is the bound quiver algebra of the quiver $Q_{\mathbb{E}_n}$
    \[
    \xymatrix{
        &&0 \ar@<.5ex>^{a_0}[d] \\
        1 \ar@<.5ex>^{a_1}[r] & 2 \ar@<.5ex>^{\bar{a}_1}[l] \ar@<.5ex>^{a_2}[r] &
        3 \ar@<.5ex>^{\bar{a}_2}[l] \ar@<.5ex>^{a_3}[r] \ar@<.5ex>^{\bar{a}_0}[u] &
        4 \ar@<.5ex>^{\bar{a}_3}[l] \ar@<.5ex>^{a_4}[r] &
        \dots \ar@<.5ex>^(.5){\bar{a}_4}[l] \ar@<.5ex>^{a_{n-2}}[r] &
        n-1 \ar@<.5ex>^(.5){\bar{a}_{n-2}}[l] \\
    }
    \]
    bound by the relations
    \begin{gather*}
        a_0 \bar{a}_0 = 0
        , \ \
        a_1 \bar{a}_1 = 0
        , \ \
        \bar{a}_0 a_0 + \bar{a}_2 a_2 + a_3 \bar{a}_3 +
        f(\bar{a}_0 a_0, \bar{a}_2 a_2)
        = 0
        ,\\
        \bar{a}_{k-1} a_{k-1} + a_k \bar{a}_k = 0
        , \mbox{ for } k \in \{ 2,4,\dots,n-2 \}
        , \ \
        \bar{a}_{n-2} a_{n-2} = 0
        ,\\
        a_0 (\bar{a}_2 a_2 + a_3 \bar{a}_3) = 0
        , \ \
        (\bar{a}_2 a_2 + a_3 \bar{a}_3) \bar{a}_0 = 0
        ,\\
        a_2 (\bar{a}_0 a_0 + \bar{a}_2 a_2 + a_3 \bar{a}_3) = 0
        , \ \
        (\bar{a}_0 a_0 + \bar{a}_2 a_2 + a_3 \bar{a}_3) \bar{a}_2 = 0
        ,\\
        \bar{a}_3 (\bar{a}_0 a_0 + \bar{a}_2 a_2 + a_3 \bar{a}_3) = 0
        , \ \
        (\bar{a}_0 a_0 + \bar{a}_2 a_2 + a_3 \bar{a}_3) a_3 = 0
          ,
    \end{gather*}
    with $f(\bar{a}_0 a_0, \bar{a}_1 a_1) = (\bar{a}_0 a_0 \bar{a}_1 a_1)^{3n-17}$.

    Assume that $P^{*}(\mathbb{E}_n)$ is symmetric.
    Then there exists a symmetrizing form $\psi : P^{*}(\mathbb{E}_n) \to K$.
    Hence $\psi\big(f(\bar{a}_0 a_0, \bar{a}_2 a_2)\big) \neq 0$, because
    $f(\bar{a}_0 a_0, \bar{a}_1 a_1)$ is
    an element from $\soc (P^{*}(\mathbb{E}_n))$.
    On the other hand
    \begin{align*}
        \psi(\bar{a}_0 a_0) = \psi(a_0 \bar{a}_0) = \psi(0) = 0,
        \\
        \psi(\bar{a}_2 a_2) = \psi(a_2 \bar{a}_2 ) =
        \psi(\bar{a}_1 a_1) = \psi(a_1 \bar{a}_1 ) = \psi(0) = 0.
    \end{align*}
    Similarly, we have
    \[
    \psi(a_3 \bar{a}_3 ) = \psi(\bar{a}_{n-2} a_{n-2}) =  \psi(0) = 0,
    \]
    because
    $\bar{a}_i a_i = a_{i+1} \bar{a}_{i+1}$, for $i \in \{3,\dots,n-3\}$,
    and
    $\psi(a_i \bar{a}_i ) = \psi(\bar{a}_i a_i)$, for $i \in \{3,\dots,n-2\}$.
    Hence
    \begin{align*}
        \psi\big(f(\bar{a}_0 a_0, \bar{a}_2 a_2)\big) &=
        \psi(\bar{a}_0 a_0)
        + \psi(\bar{a}_2 a_2)
        + \psi(a_3 \bar{a}_3)
        + \psi\big(f(\bar{a}_0 a_0, \bar{a}_2 a_2)\big)
        \\&=
        \psi\big(
        \bar{a}_0 a_0
        + \bar{a}_2 a_2
        + a_3 \bar{a}_3
        + f(\bar{a}_0 a_0, \bar{a}_2 a_2)
        \big)
        = \psi(0) = 0,
    \end{align*}
    a contradiction.
Therefore the algebra $P^*(\mathbb{E}_n)$
is not symmetric.   
\end{proof}

As a consequence of 
Lemmas 
\ref{lem:weaklysym},
\ref{lem:nonsymDn} and \ref{lem:nonsymEn}
we obtain the following theorem.

\begin{thm}
\label{th:non-sym}
Let $K$ be 
of characteristic $2$.
Then the algebras 
$P^{*}(\bD_{2m})$, $m \geq 2$,
$P^{*}(\bE_7)$,
$P^{*}(\bE_8)$
are 
weakly symmetric but
not symmetric.
\end{thm}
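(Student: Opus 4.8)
The plan is to assemble Theorem~\ref{th:non-sym} directly from the three preceding lemmas, which together cover exactly the three families of algebras in the statement. First I would fix $K$ of characteristic $2$ and treat each family in turn. For the weak symmetry part, Lemma~\ref{lem:weaklysym} applies verbatim: for $\Delta$ one of the Dynkin types $\bD_{2m}$ ($m \geq 2$), $\bE_7$, $\bE_8$, it asserts that $P^*(\Delta)$ is weakly symmetric over a field of characteristic $2$. So the weak symmetry of all three algebras $P^{*}(\bD_{2m})$, $P^{*}(\bE_7)$, $P^{*}(\bE_8)$ is immediate.

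For the failure of symmetry, I would invoke Lemma~\ref{lem:nonsymDn} for the type $\bD_{2m}$ and Lemma~\ref{lem:nonsymEn} for the types $\bE_7$ and $\bE_8$ (the latter covers both $n=7$ and $n=8$). Each of these lemmas concludes, under the hypothesis $\charact K = 2$, that the corresponding algebra $P^*(\Delta)$ is not symmetric. Putting these together with the weak symmetry gives that each of $P^{*}(\bD_{2m})$, $m \geq 2$, $P^{*}(\bE_7)$, $P^{*}(\bE_8)$ is weakly symmetric but not symmetric, which is exactly the assertion of Theorem~\ref{th:non-sym}.

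Since all the real work has been done in Lemmas~\ref{lem:weaklysym}, \ref{lem:nonsymDn}, and~\ref{lem:nonsymEn}, the proof of the theorem is simply a matter of citing them; there is no genuine obstacle. The only point requiring a word of care is that the family $P^*(\bE_n)$ in Lemma~\ref{lem:nonsymEn} is indexed by $n \in \{7,8\}$, so one must note that this indeed accounts for both $P^*(\bE_7)$ and $P^*(\bE_8)$ in the statement of the theorem. Thus the proof reads:

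\begin{proof}
Let $K$ be a field of characteristic $2$. By Lemma~\ref{lem:weaklysym}, each of the algebras $P^{*}(\bD_{2m})$, $m \geq 2$, $P^{*}(\bE_7)$, $P^{*}(\bE_8)$ is weakly symmetric. On the other hand, by Lemma~\ref{lem:nonsymDn}, the algebra $P^{*}(\bD_{2m})$ is not symmetric for any $m \geq 2$, and by Lemma~\ref{lem:nonsymEn} (applied with $n = 7$ and $n = 8$), the algebras $P^{*}(\bE_7)$ and $P^{*}(\bE_8)$ are not symmetric. Hence the algebras $P^{*}(\bD_{2m})$, $m \geq 2$, $P^{*}(\bE_7)$, $P^{*}(\bE_8)$ are weakly symmetric but not symmetric.
\end{proof}
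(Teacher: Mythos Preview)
Your proof is correct and matches the paper's approach exactly: the paper states Theorem~\ref{th:non-sym} simply as a consequence of Lemmas~\ref{lem:weaklysym}, \ref{lem:nonsymDn}, and~\ref{lem:nonsymEn}, without further argument.
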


As an immediate consequence of 
Propositions \ref{prop:Ln-clas}, \ref{prop:char-2} and
Theorems \ref{thm:Ln-clas}, \ref{th:bbk}, \ref{th:non-sym}
we obtain also the following result.

\begin{thm}
\label{th:non-deriv}
Let
$\Delta \in \{ \mathbb{D}_{2m}, \mathbb{E}_7, \mathbb{E}_8, \mathbb{L}_{n} \}$, 
$m,n \geq 2$, 
and let
$P(\Delta)$ and $P^*(\Delta)$ be
the associated 
algebras over an algebraically closed field $K$.
Then the following statements are equivalent:
\begin{enumerate}[(i)]
\item
  $P^{*}(\Delta)$ and $P(\Delta)$
  are isomorphic. 
\item
  $K$ is not of characteristic $2$.
\end{enumerate}
\end{thm}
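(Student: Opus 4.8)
The plan is to prove both implications by simply assembling results established in the previous sections, treating the type $\mathbb{L}_n$ separately from the types $\mathbb{D}_{2m}$, $\mathbb{E}_7$, $\mathbb{E}_8$.

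First I would prove $(ii) \Rightarrow (i)$. Assume $K$ is not of characteristic $2$. If $\Delta = \mathbb{L}_n$, then by definition $P^*(\mathbb{L}_n) = P^{f}(\mathbb{L}_n)$ for an admissible element $f$, so Theorem~\ref{thm:Ln-clas}(i) yields $P^*(\mathbb{L}_n) \cong P(\mathbb{L}_n)$. If $\Delta = \mathbb{D}_{2m}$, the isomorphism $P^*(\mathbb{D}_{2m}) \cong P(\mathbb{D}_{2m})$ is precisely Lemma~\ref{eq:Dn}. If $\Delta = \mathbb{E}_7$ or $\Delta = \mathbb{E}_8$, the corresponding isomorphisms are Lemmas~\ref{lem:E7-2} and~\ref{lem:E8-2}, respectively. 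Hence in all four cases statement $(i)$ holds.

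Next I would prove the contrapositive of $(i) \Rightarrow (ii)$: assuming $\charact K = 2$, I would show $P^*(\Delta)$ and $P(\Delta)$ are non-isomorphic. For $\Delta = \mathbb{L}_n$ this is immediate from Proposition~\ref{prop:Ln-clas}, since $P(\mathbb{L}_n) = L_n^{(n)}$ and $P^*(\mathbb{L}_n) = L_n^{(n-1)}$ belong to the family $L_n^{(1)}, \dots, L_n^{(n)}$, which consists of pairwise non-isomorphic algebras. For $\Delta \in \{\mathbb{D}_{2m}, \mathbb{E}_7, \mathbb{E}_8\}$, Theorem~\ref{th:bbk} shows that $P(\Delta)$ is symmetric, whereas Theorem~\ref{th:non-sym} shows that $P^*(\Delta)$ is weakly symmetric but not symmetric; as being symmetric is an isomorphism invariant, $P^*(\Delta)$ and $P(\Delta)$ cannot be isomorphic. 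This establishes the equivalence.

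There is no genuinely hard step here: the entire content has already been distilled into the cited statements, so the proof is a pure assembly. The one point requiring care is ensuring the case division is exhaustive and, in particular, invoking the $\mathbb{L}_n$-specific classification (Proposition~\ref{prop:Ln-clas} and Theorem~\ref{thm:Ln-clas}) rather than the symmetricity argument in the type $\mathbb{L}_n$ case, since $P(\mathbb{L}_n)$ is always symmetric by Proposition~\ref{thm:Ln-sym} and hence the symmetricity invariant does not separate $P^*(\mathbb{L}_n)$ from $P(\mathbb{L}_n)$.
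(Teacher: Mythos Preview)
Your proposal is correct and follows essentially the same route as the paper. For the contrapositive of $(i)\Rightarrow(ii)$ your argument is identical to the paper's; for $(ii)\Rightarrow(i)$ the paper simply invokes Proposition~\ref{prop:char-2} in one line, whereas you unpack it into the individual Lemmas~\ref{eq:Dn}, \ref{lem:E7-2}, \ref{lem:E8-2} together with Theorem~\ref{thm:Ln-clas}(i) for the $\mathbb{L}_n$ case --- a harmless difference in packaging, not in substance.
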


\begin{proof}
(i) $\Rightarrow$ (ii) \ 
Assume that  $K$ is of characteristic $2$.
If $\Delta$ is of type $\mathbb{L}_{n}$, $n \geq 2$,
then the implication follows from
Proposition~\ref{prop:Ln-clas}.
So assume 
that $\Delta$ is 
of Dynkin type.
Then it follows from
Theorem~\ref{th:bbk} that $P(\Delta)$ is symmetric,
and 
from
Theorem~\ref{th:non-sym} that $P^{*}(\Delta)$ is not symmetric.
Hence
$P^{*}(\Delta)$ and $P(\Delta)$
are not isomorphic. 

The implication (ii) $\Rightarrow$ (i)
is a consequence of 
Proposition \ref{prop:char-2}. 
\end{proof}

As a consequence of 
Propositions 
\ref{prop:no-def},
\ref{prop:iso},
\ref{prop:char-2},
and the above theorem
we obtain also the following 
characterization of 
socle deformed preprojective algebras of generalized Dynkin type.

\begin{cor}
\label{cor:full-family}
Socle deformed preprojective algebras of generalized Dynkin type 
exist only in characteristic $2$.
Moreover, in 
characteristic $2$,
the algebras
  $P^{*}(\bD_{2m})$, $m \geq 2$,
  $P^{*}(\bE_7)$,
  $P^{*}(\bE_8)$,
  $P^{*}(\bL_n)$, $n \geq 2$
form a complete family of 
pairwise non-isomorphic
socle deformed preprojective algebras of generalized Dynkin type.
\end{cor}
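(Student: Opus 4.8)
The plan is to synthesize Propositions~\ref{prop:no-def}, \ref{prop:iso} and \ref{prop:char-2} with Theorem~\ref{th:non-deriv}, using that by the constructions of Section~\ref{sec:preliminaries} each algebra $P^{*}(\Delta)$ is a deformed preprojective algebra of type $\Delta$ which is socle equivalent to $P(\Delta)$.

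First I would dispose of the existence claim. Let $\Lambda$ be a socle deformed preprojective algebra of generalized Dynkin type; by definition $\Lambda = P^{f}(\Delta)$ is socle equivalent but not isomorphic to $P(\Delta)$, hence in particular an algebra socle equivalent but not isomorphic to a preprojective algebra of generalized Dynkin type, so Proposition~\ref{prop:char-2} forces $\charact K = 2$.

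Next, assuming $\charact K = 2$, I would pin down $\Lambda$. Keeping $\Lambda = P^{f}(\Delta)$ as above and recalling that $\Lambda$ is basic, indecomposable and self-injective (being a deformed preprojective algebra), Proposition~\ref{prop:no-def} rules out $\Delta \in \{\mathbb{A}_n, \mathbb{D}_{2m+1}, \mathbb{E}_6\}$, since for these types one would get $\Lambda \cong P(\Delta)$; the only generalized Dynkin type not covered by Propositions~\ref{prop:no-def} and \ref{prop:iso} is $\mathbb{L}_1$, which is excluded because $\rad^{2} R(\mathbb{L}_1) = 0$, so that $P(\mathbb{L}_1)$ is the unique deformed preprojective algebra of type $\mathbb{L}_1$. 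Therefore $\Delta \in \{\mathbb{D}_{2m}, \mathbb{E}_7, \mathbb{E}_8, \mathbb{L}_n\}$ with $m, n \ge 2$, and Proposition~\ref{prop:iso} yields $\Lambda \cong P(\Delta)$ or $\Lambda \cong P^{*}(\Delta)$; as $\Lambda \not\cong P(\Delta)$, we get $\Lambda \cong P^{*}(\Delta)$. Conversely, for each such $\Delta$ over a field of characteristic $2$ the algebra $P^{*}(\Delta)$ is socle equivalent to $P(\Delta)$ by construction and, by Theorem~\ref{th:non-deriv} (which itself rests on Proposition~\ref{prop:Ln-clas} for type $\mathbb{L}_n$ and on Theorems~\ref{th:bbk} and \ref{th:non-sym} for the Dynkin types), is not isomorphic to $P(\Delta)$, so it genuinely is a socle deformed preprojective algebra. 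This shows that in characteristic $2$ the listed algebras exhaust the isomorphism classes.

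Finally I would verify that these algebras are pairwise non-isomorphic, the only point not already contained in the quoted results. Since each $P^{*}(\Delta)$ is a bound quiver algebra of $Q_{\Delta}$, its Gabriel quiver is $Q_{\Delta}$, from which $\Delta$ is recovered: the loop at the exceptional vertex singles out the family $P^{*}(\mathbb{L}_n)$, and within it, as within the family $P^{*}(\mathbb{D}_{2m})$, the number of vertices separates distinct parameters; among the loopless algebras $P^{*}(\mathbb{D}_{2m})$, $P^{*}(\mathbb{E}_7)$, $P^{*}(\mathbb{E}_8)$ the number of vertices separates all types except $\mathbb{D}_8$ and $\mathbb{E}_8$, which are then distinguished by the shape of the unique branch vertex of the quiver (legs of lengths $1,1,5$ versus $1,2,4$). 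I do not expect a real obstacle: the corollary is a bookkeeping synthesis of the present and preceding sections, and the only care needed is in treating the degenerate type $\mathbb{L}_1$ and the coincidence of vertex numbers for $\mathbb{D}_8$ and $\mathbb{E}_8$.
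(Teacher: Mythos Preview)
Your proposal is correct and follows essentially the same route as the paper, which simply declares the corollary a consequence of Propositions~\ref{prop:no-def}, \ref{prop:iso}, \ref{prop:char-2} and Theorem~\ref{th:non-deriv} without further argument. You supply the details the paper leaves implicit, in particular the exclusion of $\mathbb{L}_1$ via $\rad^{2} R(\mathbb{L}_1)=0$ and the pairwise non-isomorphism via the Gabriel quivers (including the $\mathbb{D}_8$/$\mathbb{E}_8$ clash), which is exactly what a careful reader would need to fill in.
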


\section{Proofs of the main results}
\label{sec:last}

\begin{proof}[Proof of Theorem~\ref{thm1}]
Assume that $\Lambda$ is a self-injective algebra 
over an algebraically closed field $K$ which is
socle equivalent but not isomorphic
to a preprojective algebra $P(\Delta)$
of generalized Dynkin $\Delta$.
Then, by
Proposition~\ref{prop:char-2},
$K$ is of characteristic $2$.
Moreover, it follows 
Proposition~\ref{prop:no-def}, 
that
$\Delta$ has to be of one of the types
$\bD_{2m} (m \geq 2)$, $\bE_7$, $\bE_8$,
$\bL_{n} (n \geq 2)$. 
Hence, applying
Proposition~\ref{prop:iso},
we conclude that $\Lambda$ is isomorphic to one
of the algebras
  $P^{*}(\bD_{2m})$, $m \geq 2$,
  $P^{*}(\bE_7)$,
  $P^{*}(\bE_8)$,
  $P^{*}(\bL_n)$, $n \geq 2$.

Assume now that $K$ is of
characteristic $2$ and $\Lambda$ is isomorphic
to the algebra $P^*(\Delta)$
with 
$\Delta \in \{ \mathbb{D}_{2m}, \mathbb{E}_7, \mathbb{E}_8 , \mathbb{L}_{n} \}$, $m,n \geq 2$.
Then, by  definition of $P^*(\Delta)$, 
$\Lambda$ is socle equivalent to
the preprojective algebra $P(\Delta)$, 
and by Theorem~\ref{th:non-deriv},
$\Lambda$ is not isomorphic to $P(\Delta)$. 
\end{proof}

\begin{proof}[Proof of Theorem~\ref{thm3}]
Let $\Lambda$ be a socle deformed preprojective algebra
of generalized Dynkin type over an algebraically closed field $K$. 
Then, by Theorem~\ref{thm1},
$K$ is of characteristic $2$ and $\Lambda$ is isomorphic to one
of the algebras
$P^{*}(\bD_{2m})$, $m \geq 2$,
$P^{*}(\bE_7)$,
$P^{*}(\bE_8)$,
$P^{*}(\bL_n)$, $n \geq 2$.

(i)
If $\Lambda$ is isomorphic to the algebra
$P^{*}(\bL_n)$ for some $n \geq 2$,
then 
it follows from 
Proposition~\ref{thm:Ln-sym}
that 
$\Lambda$ is symmetric, and hence weakly symmetric.
In the other case (if $\Lambda$ is isomorphic to one
of the algebras
$P^{*}(\bD_{2m})$, $m \geq 2$,
$P^{*}(\bE_7)$,
$P^{*}(\bE_8)$),
$\Lambda$ is weakly symmetric
by Lemma~\ref{lem:weaklysym}.

(ii)
If $\Lambda$ is isomorphic to the algebra
$P^{*}(\bL_n)$ for some $n \geq 2$,
then $\Lambda$ is symmetric
by Proposition~\ref{thm:Ln-sym}.
On the other hand, if 
$\Lambda$ is not isomorphic to the algebra
of the form $P^{*}(\bL_n)$ for some $n \geq 2$, 
then $\Lambda$ is isomorphic to one
of the algebras
$P^{*}(\bD_{2m})$, $m \geq 2$,
$P^{*}(\bE_7)$,
$P^{*}(\bE_8)$,
and hence the thesis is a consequence
of Lemmas \ref{lem:nonsymDn} and \ref{lem:nonsymEn}.
\end{proof}

\begin{proof}[Proof of Theorem~\ref{thm4}]
Let $\Delta$ be a generalized Dynkin type different from $\bA_1$ and $\bL_1$,
and $K$ be an algebraically closed field.

The equivalence of statements (i) and (iii)
follows from Theorem~\ref{thm1}.

We claim that the statements (ii) and (iii)
are also equivalent.
Indeed, if $\Delta$ is a Dynkin type (so different from $\bL_n$),
then the equivalence of (ii) and (iii)
is a direct consequence of  
Theorem~\ref{th:bbk}.
On the other hand, if $\Delta = \bL_n$ for some 
$n \geq 2$, then each of the statements (ii) and (iii) is 
satisfied if and only if $K$ is of characteristic $2$.
\end{proof}

\subsection*{Acknowledgements}
This research was supported by
the Research Grant
DEC-2011/02/A/ST1/00216 of the National Science Center Poland.

\end{document}